\documentclass{article}
\usepackage{amssymb}
\usepackage{algorithm}
\usepackage{algpseudocode}
\usepackage{float}
\usepackage{PRIMEarxiv}
\usepackage{graphicx}
\usepackage{booktabs}
\usepackage{amsmath,amssymb}
\usepackage{subcaption}
\usepackage[table]{xcolor}
\usepackage{multirow}
\usepackage{amsmath}
\usepackage{amsthm}
\newtheorem{assumption}{Assumption}
\newtheorem{remark}{Remark}
\newtheorem{lemma}{Lemma}
\newtheorem{theorem}{Theorem}

\usepackage[T1]{fontenc}    
\usepackage{url}            
\usepackage{booktabs}       
\usepackage{amsfonts}       
\usepackage{nicefrac}       
\usepackage{microtype}      
\usepackage{lipsum}
\usepackage{fancyhdr}       
\usepackage{graphicx}       
\graphicspath{{media/}}     
\usepackage{etoc}
\usepackage{color}

\newcommand{\R}{\mathbb R}

\newcommand{\xkh}[1]{\left(#1\right)}

\newcommand{\dkh}[1]{\left\{#1\right\}}

\newcommand{\norm}[1]{\left\|{#1}\right\|_2}

\newcommand{\Abs}[1]{\left\lvert#1\right\rvert}

\title{Higher-order Diffusion Sampling via Chebyshev Interpolation and Gauss--Seidel Iterations}

\author{
Bingyuan Wei\thanks{Computational Mathematics, Beihang University, Beijing, China. Email: BingyuanWei@buaa.edu.cn}
\and
Meng Huang \thanks{School of Mathematical Sciences, Beihang University, Beijing, 100191, China. Email: menghuang@buaa.edu.cn}
}

\begin{document}
\maketitle

\begin{abstract}
Higher-order ODE solvers have shown strong empirical promise for accelerating diffusion models through the probability flow ODE, but rigorous non-asymptotic guarantees for such acceleration remain limited. In this paper, we develop a Chebyshev--Gauss--Seidel higher-order sampler and establish a non-asymptotic convergence guarantee that allows the approximation order to grow logarithmically with the number of outer iterations. In the exact-score setting, up to logarithmic factors, the proposed sampler requires at most
\[
d^{1+o_T(1)}\varepsilon^{-1/K_1}
\]
score functions to approximate the target distribution on \(\mathbb{R}^d\) within total variation distance \(\varepsilon\), where \(o_T(1)\to 0\) as \(T\to\infty\) and \(K_1>0\) is a sufficiently large constant. The analysis assumes only a polynomial second-moment bound on the target distribution, thereby relaxing the bounded-support condition imposed in existing higher-order theory. Moreover, the guarantee is robust to score and Jacobian estimation errors and does not require higher-order smoothness assumptions on the score estimates. Numerical experiments on anisotropic Gaussian mixture benchmarks support the predicted improvement in the accuracy--cost tradeoff under finite score-evaluation budgets.
\end{abstract}

\keywords{diffusion models, probability flow ODE, higher-order sampler, Chebyshev--Lobatto interpolation}

\section{Introduction}
\subsection{Diffusion models}
Diffusion models have emerged as a central paradigm in modern generative modeling, achieving strong empirical performance across image synthesis, text generation, audio generation, and related tasks \cite{sohl2015deep,song2019generative,ho2020ddpm,song2020ddim,song2021sde,dhariwal2021diffusion,rombach2022latent,saharia2022photorealistic}. Two of the most influential diffusion frameworks are denoising diffusion probabilistic models (DDPM) \cite{ho2020ddpm} and denoising diffusion implicit models (DDIM) \cite{song2020ddim}. These methods generate high-quality samples by approximately reversing a progressive noising process. In contrast to alternative generative paradigms, such as generative adversarial networks (GANs) \cite{goodfellow2014gan}, variational autoencoders (VAEs) \cite{kingma2014vae}, and normalizing flows \cite{rezende2015nf}, which typically permit sample generation in a single forward pass or with substantially fewer iterative steps, diffusion samplers require a sequence of reverse-time updates, each of which generally involves evaluating a pretrained neural network for denoising or score estimation.

More specifically, a diffusion model is built on two stochastic processes in
\(\mathbb{R}^d\). The first is a forward process
\[
X_0 \xrightarrow{\text{add noise}} X_1 \xrightarrow{\text{add noise}} \cdots \xrightarrow{\text{add noise}} X_T,
\]
which begins with a sample drawn from the target data distribution and
gradually transforms it into a noise-like distribution according to a
prescribed variance schedule \(\{\beta_t\}_{t=1}^T\); see, e.g.,
\cite{ho2020ddpm,song2021sde}. When \(T\) is sufficiently large, the
distribution of \(X_T\) is typically close to a standard Gaussian
distribution. The goal of diffusion generative modeling is then to construct a
reverse process
\[
Y_T \xrightarrow{\text{denoise}} Y_{T-1} \xrightarrow{\text{denoise}} \cdots \xrightarrow{\text{denoise}} Y_0,
\]
which starts from pure noise and progressively recovers a sample whose
distribution is close to that of the target data, ideally so that
\[
Y_t \stackrel{d}{\approx} X_t,\qquad t=T,\ldots,0.
\]

The reverse-time dynamics are determined by the score functions of the forward
marginals, thereby linking diffusion sampling to score-based generative
modeling and reverse-time diffusion theory
\cite{song2019generative,anderson1982reverse,haussmann1986time,hyvarinen2005score,vincent2011connection}.
Within this framework, DDPM is commonly viewed as a discretization of the
reverse-time stochastic dynamics \cite{ho2020ddpm}, whereas DDIM is closely
connected to the deterministic probability flow ODE
\cite{song2020ddim,song2021sde}. Both formulations require repeated evaluation
of pretrained score or denoising networks during sampling. Consequently, even
after training, generation can remain computationally expensive, making it a
central goal of accelerated diffusion sampling to reduce the number of score
function evaluations while maintaining sampling accuracy.

\subsection{Training-free acceleration and the higher-order theory gap}

Training-free acceleration has become a central approach to fast diffusion
sampling. Rather than introducing an additional distillation or
consistency-training stage, it keeps the pretrained score estimates fixed and
improves the accuracy--cost tradeoff through more effective discretizations of
the reverse-time dynamics. For probability flow ODE samplers, this naturally
leads to higher-order ODE discretizations. Methods such as
DPM-Solver~\cite{lu2022dpmsolver}, DEIS~\cite{zhang2022deis},
UniPC~\cite{zhao2023unipc}, and DPM-Solver++ for guided
sampling~\cite{lu2022dpmsolverpp} have demonstrated substantial empirical
speedups while maintaining high sample quality; see also
classifier-free guidance~\cite{ho2022cfg}. These empirical successes make
higher-order probability flow ODE sampling a compelling target for rigorous
non-asymptotic analysis.

The theoretical understanding of such acceleration, however, remains relatively
limited. For SDE-based diffusion sampling, polynomial-time guarantees under
weak assumptions were first established in
\cite{chen2022sampling,lee2022polynomial,lee2023general,benton2024nearly}. For
deterministic samplers based on the probability flow ODE,
\cite{chen2023pflow} gave the first provable convergence guarantee, and
\cite{li2024sharp} later established a sharp first-order benchmark. Since the
present work focuses on training-free higher-order acceleration, we summarize in
Table~\ref{tab:ode_theory_comparison} the deterministic probability flow ODE
guarantees most relevant to our setting, with all displayed iteration
complexities suppressing logarithmic factors. In particular, Runge--Kutta
analyses \cite{huang2024pflow} improve the dependence on \(\varepsilon\), but
require compact support and higher-order smoothness assumptions. Meanwhile,
\cite{li2025faster} proves acceleration under comparatively weak
distributional and score-estimation assumptions, but still treats the
approximation order \(K\) as a constant and assumes bounded support of the
target distribution. This suggests that the current complexity theory for
higher-order probability flow ODE sampling remains improvable under weak
assumptions, which is precisely the motivation for this paper.

\begin{table}[H]
\centering
\caption{Comparison of deterministic probability flow ODE sampling guarantees
in total variation. Here \(s_\tau\) and \(\nabla s_\tau\) denote the estimated score
function and its Jacobian, while \(s_\tau^\star\) and \(\nabla s_\tau^\star\) denote
their exact counterparts, $D$ represents the radius of the data support and $L$ bounds certain higher-order derivatives of the score estimates.}
\label{tab:ode_theory_comparison}
\small
\setlength{\tabcolsep}{4pt}
\renewcommand{\arraystretch}{1.20}
\begin{tabular}{lllll}
\toprule
paper
&
target assumption
&
score / regularity assumption
&
iteration complexity
&
higher-order solver
\\
\midrule

\cite{li2024sharp}
&
$\mathbb P(\|X_0\|_2\le T^{c_R})=1$
&
$s_\tau\approx s_\tau^\star,\ 
\nabla s_\tau\approx \nabla s_\tau^\star$
&
$\max\{d^2,d/\varepsilon\}$
&
$\times$
\\

\cite{huang2024pflow}
&
$\mathbb P(\|X_0\|_2\le D)=1$
&
$s_\tau\approx s_\tau^\star,\quad s_\tau\in C^{p+1}$
&
$(LDd)^{1+1/p}\varepsilon^{-1/p}$
&
$\checkmark$ $p$-th order
\\

\cite{li2025faster}
&
$\mathbb P(\|X_0\|_2\le T^{c_R})=1$
&
$s_\tau\approx s_\tau^\star,\ 
\nabla s_\tau\approx \nabla s_\tau^\star$
&
$
\max\{d^2,\,
d^{1+2/K}\varepsilon^{-1/K}\}
$
&
$\checkmark$ $K$-th order(fixed $K$)
\\

\rowcolor{gray!15}
this work
&
$\mathbb E\|X_0\|_2^2\le T^{2c_R}$
&
$s_\tau\approx s_\tau^\star,\ 
\nabla s_\tau\approx \nabla s_\tau^\star$
&
$
d^{1+o_T(1)}\varepsilon^{-1/K_1}
$
&
$\checkmark$ $K$-th order($K\asymp\log T$)
\\

\bottomrule
\end{tabular}
\end{table}

\subsection{Main contributions}

As noted above, existing complexity guarantees for higher-order probability flow
ODE sampling rely on two key restrictions: the interpolation order \(K\) on
each reverse-time interval is treated as a fixed constant, and the target
distribution is assumed to satisfy the bounded-support condition
\(
\mathbb P(\|X_0\|_2 \le T^{c_R}) = 1.
\)
Under these assumptions, the iteration complexity required to achieve
\(\varepsilon\)-accuracy in total variation scales as
\(
d^{1+2/K}\varepsilon^{-1/K}.
\)
In this paper, we show that this complexity can be further improved under the
substantially weaker moment condition
\(
\mathbb E\|X_0\|_2^2 \le T^{2c_R}.
\)
Specifically, by developing a Chebyshev--Gauss--Seidel higher-order sampler
within the probability flow ODE framework of \cite{li2025faster}, we obtain the
improved complexity bound
\(
d^{1+o_T(1)}\varepsilon^{-1/K_1},
\)
where \(K_1\) is a sufficiently large constant.

Our approach has two main ingredients. First, we replace the equi-spaced nodes
used in \cite{li2025faster} with Chebyshev--Lobatto nodes, which allow the
interpolation order \(K\) to grow as \(\log T\) while preserving control of
error propagation. Second, we replace the Jacobi iteration in
\cite{li2025faster} with a Gauss--Seidel-type refinement scheme to approximate
the probability flow ODE solution more effectively. As a result, the number of
score function evaluations required to achieve a prescribed total variation
accuracy is reduced to
\[
d^{1+o_T(1)}\varepsilon^{-1/K_1},
\]
up to logarithmic factors, where \(o_T(1)\to 0\) as \(T\to\infty\).

In addition, our theory is robust to inexact score estimation: it requires only
score and Jacobian accuracy along the iterates and does not impose
higher-order smoothness assumptions on the learned score. Finally, numerical
experiments on anisotropic Gaussian mixture benchmarks support the predicted
improvement in the accuracy--cost tradeoff.

\subsection{Other related work}

Beyond the probability flow ODE literature discussed above, non-asymptotic
convergence theory for score-based diffusion sampling has been developed for
DDPM-type samplers and reverse-SDE methods under increasingly general
assumptions on the data distribution and score estimation error. These works
establish guarantees in total variation, Kullback--Leibler divergence, and
Wasserstein distance
\cite{lee2022polynomial,lee2023general,chen2022sampling,chen2022userfriendly,benton2024nearly,bruno2025logconcave,conforti2025kl,gao2025generalw2}.

For probability flow ODE samplers, recent studies have established
polynomial-time convergence with a corrector step, nearly dimension-linear
convergence for discrete-time implementations, prediction--correction
guarantees, Wasserstein convergence, and adaptation to intrinsic
low-dimensional structure
\cite{chen2023pflow,li2024sharp,pedrotti2024prediction,gao2025pfloww2,tang2025adaptivity,beyler2025wasserstein}.
Provable acceleration has further been investigated through training-free
accelerated DDIM/DDPM samplers, higher-order Runge--Kutta discretizations of
the probability flow ODE, higher-order Lagrange interpolation with successive
refinement, operator-splitting samplers, and first-order forward-value
evaluation
\cite{li2024accelerating,huang2024pflow,li2025faster,liu2026operatorsplitting,jiao2025firstorder}.
Recent work has also established high-accuracy diffusion sampling guarantees
with polylogarithmic dependence on the inverse accuracy under a variety of
algorithmic and structural assumptions
\cite{gatmiry2026highaccuracy,chen2026highaccuracy}.

\subsection{Notations}
Throughout this paper, we write \(f = O(g)\) or \(f \lesssim g\) if
\(|f| \le C |g|\) for some universal constant \(C > 0\). We write
\(f \asymp g\) if both \(f \lesssim g\) and \(g \lesssim f\) hold.
The notation \(\widetilde O(\cdot)\) suppresses logarithmic factors in the
relevant parameters, and \(o_T(1)\) denotes a quantity that tends to zero as
\(T \to \infty\). For any two probability measures \(P\) and \(Q\), their
total variation distance is defined by
\(
\operatorname{TV}(P,Q) := \frac{1}{2}\int |\mathrm{d}P - \mathrm{d}Q|.
\)
In addition, \(p_X(\cdot)\) and \(p_{X \mid Y}(\cdot \mid \cdot)\) denote the
probability density functions of \(X\) and of \(X\) conditional on \(Y\),
respectively. For any matrix \(A\), we use \(\|A\|\) and
\(\|A\|_{\mathrm{F}}\) to denote its spectral norm and Frobenius norm,
respectively. Finally, for any vector-valued function \(f\), we let
\(\frac{\partial f}{\partial x}\) denote the Jacobian matrix of \(f\).

\subsection{Organization}
The remainder of this paper is organized as follows. Section~\ref{sec:Preliminary}
presents the preliminaries on diffusion models and the probability flow ODE,
and Section~\ref{sec:algo} introduces the proposed Chebyshev--Gauss--Seidel
higher-order sampler. Section~\ref{sec:main_result} states the main result, and
Section~\ref{sec:promainresult} contains its proof. Section~\ref{sec:numerical_study}
reports the numerical experiments, while Section~\ref{sec:discussion}
concludes the paper with further discussion. The appendix collects the
auxiliary lemmas used in the analysis, as well as the detailed proofs of the
lemmas used in Section~\ref{sec:promainresult}.

\section{Preliminaries}
\label{sec:Preliminary}
In this section, we review the basic framework of diffusion generative models based on (Stein) score functions. The goal of a generative model is to produce samples whose distribution is close to an unknown target distribution \(p_{\text{data}}\) on \(\mathbb{R}^d\), given access to data drawn from \(p_{\text{data}}\). A diffusion generative model typically involves two stochastic processes: a forward process and a reverse process, which we describe below.

\paragraph{Forward process.} 
Starting from an initial sample \(X_0\) drawn from the target distribution
\(p_{\text{data}}\) on \(\mathbb{R}^d\), the forward process is defined by
\begin{equation}
    X_t = \sqrt{\alpha_t}\, X_{t-1} + \sqrt{1-\alpha_t}\, W_t,
    \qquad t=1,\ldots,T,
    \label{forward_process}
\end{equation}
where \(0 < \alpha_t < 1\) are prescribed noise-scheduling parameters, and
\(\{W_t\}_{t=1}^T\) is a sequence of independent standard Gaussian random vectors
in \(\mathbb{R}^d\), that is,
\[
W_t \stackrel{\text{i.i.d.}}{\sim} \mathcal{N}(0,I_d).
\]
Define
\begin{equation}
    \bar{\alpha}_t := \prod_{k=1}^t \alpha_k,
    \qquad t=1,\ldots,T.
\end{equation}
Then, for each \(t=1,\ldots,T\), it is straightforward to verify that
\begin{equation}
    X_t = \sqrt{\bar{\alpha}_t}\, X_0 + \sqrt{1-\bar{\alpha}_t}\, \bar{W}_t,
    \qquad \text{where } \bar{W}_t \sim \mathcal{N}(0,I_d).
\end{equation}
In particular, when \(\bar{\alpha}_T\) is sufficiently small, the distribution of
\(X_T\) is close to \(\mathcal{N}(0,I_d)\) for a broad class of data distributions:
\begin{equation}
\operatorname{Law}(X_T) \approx \mathcal{N}(0,I_d).
\label{eq:X_T}
\end{equation}
Diffusion models are closely connected to the framework of stochastic
differential equations (SDEs), and it is therefore useful to introduce a
continuous-time version \((\bar{X}_\tau)_{\tau \in [0,1]}\) of the forward
diffusion process. Specifically, consider the SDE
\begin{equation}
    \mathrm{d}\bar{X}_\tau
    = -\frac{1}{2(1-\tau)} \bar{X}_\tau\, \mathrm{d}\tau
    + \frac{1}{\sqrt{1-\tau}}\, \mathrm{d}B_\tau,
    \qquad
    \bar{X}_0 \sim p_{\text{data}},
    \qquad 0 \le \tau < 1,
    \label{eq:forwSDE}
\end{equation}
where \(B_\tau\) is a standard Brownian motion in \(\mathbb{R}^d\). One can
verify that the solution to this SDE satisfies
\begin{equation}
    \bar{X}_\tau = \sqrt{1-\tau}\, X_0 + \sqrt{\tau}\, Z,
    \qquad
    X_0 \sim p_{\text{data}},
    \quad
    Z \sim \mathcal{N}(0,I_d),
    \label{def_1}
\end{equation}
where \(X_0\) and \(Z\) are independent. In particular,
\begin{equation}
    \bar{X}_{1-\bar{\alpha}_t} \stackrel{\mathrm{d}}{=} X_t,
    \qquad t=1,\ldots,T.
    \label{eq:relas}
\end{equation}

\paragraph{Reverse process.}
A central goal of diffusion models is to construct a time-reversed process whose
marginal distributions coincide with, or closely approximate, those of the
forward process. More precisely, we seek a reverse-time process
\[
Y_T \to Y_{T-1} \to \cdots \to Y_1
\]
such that
\[
Y_t \stackrel{\mathrm{d}}{\approx} X_t,
\qquad t=1,\ldots,T.
\]

Using the relation between \(X_t\) and \(\bar{X}_\tau\) in \eqref{eq:relas}, we
can design a discrete reverse-time process \(\{Y_t\}_{t=1}^T\) through the
time-reversal of the continuous-time process \(\bar{X}_\tau\). Specifically,
for the forward process \eqref{eq:forwSDE}, classical results on reverse-time
SDEs~\cite{song2021sde} show that the corresponding probability flow ODE is
given as follows: for any \(\tau_0 \in (0,1)\), the process
\(\{Y_\tau^{\mathrm{ode}}\}_{\tau \in [0,\tau_0)}\) satisfies
\begin{equation*}
\mathrm{d}Y_\tau^{\mathrm{ode}}
=
-\frac{1}{2(1-\tau)}
\left(
Y_\tau^{\mathrm{ode}}
+
\nabla \log p_{\bar{X}_\tau}\!\left(Y_\tau^{\mathrm{ode}}\right)
\right)\mathrm{d}\tau,
\qquad
Y_{\tau_0}^{\mathrm{ode}} \sim \bar{X}_{\tau_0},
\qquad
0 \le \tau < \tau_0,
\end{equation*}
or equivalently,
\begin{equation}
\mathrm{d}\!\left(\frac{Y_\tau^{\mathrm{ode}}}{\sqrt{1-\tau}}\right)
=
-\frac{1}{2(1-\tau)^{3/2}}
\nabla \log p_{\bar{X}_\tau}\!\left(Y_\tau^{\mathrm{ode}}\right)\mathrm{d}\tau,
\qquad
Y_{\tau_0}^{\mathrm{ode}} \sim \bar{X}_{\tau_0},
\qquad
0 \le \tau < \tau_0,
\label{ODE}
\end{equation}
and has the same marginal distributions as
\(\{\bar{X}_\tau\}_{\tau \in [0,\tau_0)}\).

Importantly, the reverse process depends only on the gradient of the log-density
\(\nabla \log p_{\bar{X}_\tau}\), known as the \emph{score function} of
\(p_{\bar{X}_\tau}\). For the continuous-time forward process
\(\{\bar{X}_\tau\}_{\tau \in [0,1]}\), the exact score function at time
\(\tau \in (0,1]\) is defined by
\begin{equation}
s_\tau^\star(x) := \nabla \log p_{\bar{X}_\tau}(x),
\qquad x \in \mathbb{R}^d.
\label{score_define}
\end{equation}

If these score functions were available, one could initialize
\(Y_T \sim \mathcal{N}(0,I_d)\) in view of \eqref{eq:X_T}, and then generate
samples by numerically solving the probability flow ODE \eqref{ODE}. In
practice, however, the true score functions are unknown and must be learned
from data, typically using neural networks trained on samples from the target
distribution~\cite{ho2020ddpm,song2020ddim}. Consequently, the practical
reverse process \(Y_T \to Y_{T-1} \to \cdots \to Y_1\) is obtained by
approximately solving \eqref{ODE} using estimated score functions, giving rise
to a broad class of diffusion sampling algorithms~\cite{chen2023pflow,li2024minimal,li2024sharp,li2025faster}.

\section{Gauss--Seidel Refinement with Higher-Order Diffusion Sampling} \label{sec:algo}
In this section, we present the main ideas and the detailed procedure of our
proposed algorithm. At a high level, the method is a Gauss--Seidel-type
higher-order iterative scheme for approximately solving the ODE \eqref{ODE}. To
motivate the construction, recall that
\(X_t \stackrel{\mathrm{d}}{=} \bar{X}_{1-\bar{\alpha}_t}\). According to the
probability flow ODE \eqref{ODE}, the ideal reverse update from \(Y_t\) to
\(Y_{t-1}\) is given by \(Y_{t-1} = Y_{1-\bar{\alpha}_{t-1}}^{\mathrm{ode}}\),
where \(Y_\tau^{\mathrm{ode}}\) denotes the solution to \eqref{ODE} with
terminal condition \(Y_{1-\bar{\alpha}_t}^{\mathrm{ode}} = Y_t\). In
particular, we have
\begin{equation}
    \frac{Y_{t-1}}{\sqrt{\bar{\alpha}_{t-1}}}
    =
    \frac{Y_t}{\sqrt{\bar{\alpha}_t}}
    -
    \int_{1-\bar{\alpha}_t}^{1-\bar{\alpha}_{t-1}}
    \frac{1}{2(1-\tau)^{3/2}}
    s_\tau^\star\!\left(Y_\tau^{\mathrm{ode}}\right)\,\mathrm{d}\tau .
    \label{ODE_score}
\end{equation}
Directly implementing this update is, however, computationally prohibitive,
since the integral depends on a continuum of score evaluations, whereas in
practice we only have access to a finite collection of estimated score
functions. To address this issue, on each interval
\(\bigl[1-\bar{\alpha}_t,\,1-\bar{\alpha}_{t-1}\bigr]\) we approximate
\(s_\tau^\star\!\left(Y_\tau^{\mathrm{ode}}\right)\) by polynomial
interpolation. In this paper, we employ a higher-order approximation based on
\(K\) score evaluations at Chebyshev--Lobatto nodes, where \(K \ge 2\).
Specifically, for the interval
\(\bigl[1-\bar{\alpha}_t,\,1-\bar{\alpha}_{t-1}\bigr]\), define
\begin{equation}
    \tau_{t,0} = 1-\bar{\alpha}_t,
    \qquad
    \tau_{t,K-1} = 1-\bar{\alpha}_{t-1},
\end{equation}
and choose the Chebyshev--Lobatto nodes by
\begin{equation}
    \tau_{t,j}
    =
    \frac{\tau_{t,0} + \tau_{t,K-1}}{2}
    +
    \frac{\tau_{t,0} - \tau_{t,K-1}}{2}
    \cos\!\left(\frac{j\pi}{K-1}\right),
    \qquad
    0 \le j \le K-1.
    \label{eq:nodes}
\end{equation}
Then
\[
\tau_{t,K-1} < \tau_{t,K-2} < \cdots < \tau_{t,0}.
\]

Given the collection of points
\[
\left(
\tau_{t,j},
\,(1-\tau_{t,j})^{-3/2}
s_{\tau_{t,j}}^\star\!\bigl(Y_{\tau_{t,j}}^{\mathrm{ode}}\bigr)
\right),
\qquad
0 \le j \le K-1,
\]
we approximate \((1-\tau)^{-3/2} s_\tau^\star\!\left(Y_\tau^{\mathrm{ode}}\right)\)
by the degree-\((K-1)\) Lagrange interpolating polynomial through these \(K\)
points, namely,
\begin{equation}
\frac{1}{(1-\tau)^{3/2}} s_\tau^\star\!\left(Y_\tau^{\mathrm{ode}}\right)
\approx
\sum_{j=0}^{K-1} \psi_{t,j}(\tau)
\frac{s_{\tau_{t,j}}^\star\!\left(Y_{\tau_{t,j}}^{\mathrm{ode}}\right)}
{(1-\tau_{t,j})^{3/2}},
\qquad
\forall \tau \in [\tau_{t,K-1}, \tau_{t,0}],
\label{eq:appopoly}
\end{equation}
where \(\psi_{t,j}(\tau)\) are the Lagrange basis polynomials defined by
\begin{equation}
\psi_{t,j}(\tau)
:=
\frac{\prod_{j' \ne j} (\tau - \tau_{t,j'})}
     {\prod_{j' \ne j} (\tau_{t,j} - \tau_{t,j'})},
\qquad
0 \le j \le K-1.
\label{eq:basicfun}
\end{equation}
A closer inspection shows that the approximation in \eqref{eq:appopoly} still
cannot be used directly, because the values
\(\{Y_{\tau_{t,j}}^{\mathrm{ode}}\}_{j=0}^{K-1}\) are generally unavailable. To
overcome this difficulty, we propose a \emph{Gauss--Seidel refinement}
procedure that alternates between estimating
\(\{Y_{\tau_{t,j}}^{\mathrm{ode}}\}_{j=0}^{K-1}\) and approximating
\((1-\tau)^{-3/2} s_\tau^\star\!\left(Y_\tau^{\mathrm{ode}}\right)\). More
precisely, starting from an initial sequence
\(\{x_{\tau_{t,j}}^{(0)}\}_{j=0}^{K-1}\) as an approximation to
\(\{Y_{\tau_{t,j}}^{\mathrm{ode}}\}_{j=0}^{K-1}\), the Gauss--Seidel scheme
updates this estimate to
\(\{x_{\tau_{t,j}}^{(n+1)}\}_{j=0}^{K-1}\) at iteration
\(n = 0,1,\ldots,N-1\) as follows: for each \(j=1,\ldots,K-1\),
\begin{equation}
\frac{x_{\tau_{t,j}}^{(n+1)}}{\sqrt{1-\tau_{t,j}}}
=
\frac{x_{\tau_{t,0}}^{(0)}}{\sqrt{1-\tau_{t,0}}}
+
\sum_{k=0}^{j-1}
\frac{\gamma_{t,k}(\tau_{t,j})}{2(1-\tau_{t,k})^{3/2}}
s_{\tau_{t,k}}\!\left(x_{\tau_{t,k}}^{(n+1)}\right)
+
\sum_{k=j}^{K-1}
\frac{\gamma_{t,k}(\tau_{t,j})}{2(1-\tau_{t,k})^{3/2}}
s_{\tau_{t,k}}\!\left(x_{\tau_{t,k}}^{(n)}\right),
\label{eq:gs_update}
\end{equation}
where \(x_{\tau_{t,0}}^{(n)} = x_{\tau_{t,0}}^{(0)}\) for all \(n=1,\ldots,N\),  $s_{\tau_{t,k}}$ is the estimated score functions, 
and
\begin{equation}
\gamma_{t,k}(\tau_{t,j})
:=
\int_{\tau_{t,j}}^{\tau_{t,0}} \psi_{t,k}(\tau)\,\mathrm{d}\tau.
\label{eq:gammtk}
\end{equation}

We now summarize our higher-order method for diffusion models; the detailed procedure is given in Algorithm~\ref{alg:gs_sampler}.
\begin{itemize}
\item[(i)] \textbf{Initialization.} Sample $Y_T \sim \mathcal{N}(0,I_d)$, and set $x_{\tau_{T,j}}^{(0)} := Y_T$ for all $j = 0,\ldots,K-1$, where $\tau_{T,j}$ are defined in \eqref{eq:nodes}.
\item[(ii)] \textbf{Iterative update rule.} For $t = T,T-1,\ldots,2$, compute $\{x_{\tau_{t,j}}^{(n+1)}\}_{j=0}^{K-1}$ according to the update rule \eqref{eq:gs_update} for $n = 0,1,\ldots,N-1$. After $N$ iterations at step $t$, set $Y_{t-1} := x_{\tau_{t,K-1}}^{(N)}$ and initialize the next step by setting $x_{\tau_{t-1,j}}^{(0)} := Y_{t-1}$ for all $j = 0,\ldots,K-1$.
\end{itemize}

\begin{algorithm}[H]
\caption{Gauss--Seidel Refinement with Higher-Order Diffusion Sampling}
\label{alg:gs_sampler}
\begin{algorithmic}[1]
\Require $T,K,N$, interpolation nodes $\{\tau_{t,i}\}$, coefficients $\{\gamma_{t,j}(\tau_{t,i})\}$, score estimator $s_\tau(\cdot)$
\Ensure $Y_1$

\State Sample $Y_T \sim \mathcal{N}(0,I_d)$
\For{$t=T,T-1,\dots,2$}
    \State Set $x_{\tau_{t,j}}^{(0)} \gets Y_t$ for all $j=0,\ldots,K-1$
    \For{$n=0,\dots,N-1$}
        \State Compute $x_{\tau_{t,j}}^{(n+1)}$ via \eqref{eq:gs_update} sequentially for $j=1,\dots,K-1$
    \EndFor
    \State Set $Y_{t-1} \gets x_{\tau_{t,K-1}}^{(N)}$
\EndFor

\State \Return $Y_1$
\end{algorithmic}
\end{algorithm}

Algorithm~\ref{alg:gs_sampler} shows that, once the initial value \(Y_T\) is sampled, all intermediate iterates \(\{x_{\tau_{t,i}}^{(n)}\}\) are fully determined by the prescribed update rule together with the available score estimator. The total number of score function evaluations is \(O(TK(N+1))\). Since, as shown later, \(K \le c\log T\) and \(N=\lceil C_3K\log T\rceil\), it follows that the overall number of score function evaluations is \(\widetilde{O}(T)\).

\section{Main Results} \label{sec:main_result}
In this section, we establish convergence guarantees for Algorithm~\ref{alg:gs_sampler}. Throughout, let \(q_t := \operatorname{distribution}\left(X_t\right)\) denote the target distribution at time \(t\), where \(X_t\) is the forward process initialized from the data distribution, and let \(p_t := \operatorname{distribution}\left(Y_t\right)\) denote the distribution at time \(t\) generated by our algorithm. We begin by introducing several assumptions on the learning-rate schedule, the target distribution, and the estimated score functions.

\subsection{Assumptions}
For the forward process given in \eqref{forward_process}, the learning rates $\dkh{\alpha_t}_{t=1}^T$ are chosen as follows:
\begin{equation} \label{eq:beta1}
\beta_1 = 1-\alpha_1 = T^{-c_0},
\end{equation}
and for $t \ge 2$,
\begin{equation} \label{eq:learnrate}
\beta_t = 1-\alpha_t
= \frac{c_1 \log T}{T}
\min\left\{
\beta_1\left(1+\frac{c_1 \log T}{T}\right)^t,\,1 \right\},
\end{equation}
where $c_0,c_1>0$ are sufficiently large numerical constants.  This choice of
learning-rate schedule is standard in recent diffusion theory; see, for
example, \cite{benton2024nearly,liyan2024odt,li2025faster,huang2026denoising}.
Next, we impose a mild assumption on the target distribution
\(p_{\mathrm{data}}\).

\begin{assumption}[Target data distribution] \label{assup:1}
The target distribution $p_{\mathrm{data}}$  has a bounded second moment, namely,
\begin{equation*}
\mathbb{E}_{X_0 \sim p_{\mathrm{data}}}\bigl[\|X_0\|_2^2\bigr] \le T^{2c_R}
\end{equation*}
for some sufficiently large constant $c_R>0$.
\end{assumption}

\begin{remark}
Assumption~\ref{assup:1} requires only that the second moment of the target
distribution grow at most polynomially with the iteration number \(T\).
Since \(T\) itself typically scales polynomially with the dimension \(d\), this
condition allows the second moment to remain fairly large. In contrast, the
closely related work \cite{li2025faster} assumes the bounded-support condition
\[
\mathbb{P}\bigl(\|X_0\|_2 \le T^{c_R}\bigr)=1,
\]
which is strictly stronger than Assumption~\ref{assup:1}. Furthermore, our
assumption is more consistent with standard conditions in sampling theory and
more plausible for empirical data encountered in practice.
\end{remark}

As described in Section~\ref{sec:algo}, the true score function appearing in the integral is approximated by a finite collection of estimated score functions. Accordingly, the convergence behavior of our proposed algorithm is intrinsically tied to the accuracy of these score estimates.

\begin{assumption}[Score accuracy] \label{assup:2}
Assume that the score estimates satisfy
\[
\frac{1}{T(N+1)K}\sum_{t=1}^{T}
\mathbb{E}_{Y_t\sim q_t}
\bigl[\varepsilon_{\mathrm{score},t}^2(Y_t)\bigr]
\le
\varepsilon_{\mathrm{score}}^2,
\]
where
\[
\varepsilon_{\mathrm{score},t}^2(Y_t)
:=
\sum_{i=0}^{K-1}\sum_{n=0}^{N}
\left\|
s_{\tau_{t,i}}\!\left(x_{\tau_{t,i}}^{(n)}\right)
-
s_{\tau_{t,i}}^\star\!\left(x_{\tau_{t,i}}^{(n)}\right)
\right\|_2^2
\]
and $x_{\tau_{t,i}}^{(n)}$ are the points generated by \eqref{eq:gs_update},  with  $x_{\tau_{t,0}}^{(n)}=Y_t$ for all $n=0,1,\ldots,N$.
\end{assumption}

For ODE-based samplers, an \(\ell_2\) bound on the score estimation error alone is generally insufficient for convergence analysis. We therefore impose an additional assumption on the Jacobian estimation error.
\begin{assumption}[Jacobian accuracy] \label{assup:3}
Assume that, for all \(1 \le t \le T\) and \(0 \le i \le K-1\), each score
function \(s_{\tau_{t,i}}(\cdot)\) is continuously differentiable. Moreover,
assume that
\[
\frac{1}{T(N+1)K}\sum_{t=1}^{T}
\mathbb{E}_{Y_t\sim q_t}
\bigl[\varepsilon_{\mathrm{Jacobi},t}^2(Y_t)\bigr]
\le
\varepsilon_{\mathrm{Jacobi}}^2,
\]
where
\[
\varepsilon_{\mathrm{Jacobi},t}^2(Y_t)
:=
\sum_{i=0}^{K-1}\sum_{n=0}^{N}
\left\|
\frac{\partial s_{\tau_{t,i}}\!\left(x_{\tau_{t,i}}^{(n)}\right)}{\partial x}
-
\frac{\partial s_{\tau_{t,i}}^\star\!\left(x_{\tau_{t,i}}^{(n)}\right)}{\partial x}
\right\|^2 
\]
and $x_{\tau_{t,i}}^{(n)}$ denotes the sequence of points generated by
\eqref{eq:gs_update}, with \(x_{\tau_{t,0}}^{(n)} = Y_t\) for every
\(n=0,1,\ldots,N\).
\end{assumption}

\subsection{Convergence guarantees}

We now state the main convergence result for Algorithm~\ref{alg:gs_sampler}.

\begin{theorem} \label{main_theorem}
Suppose  that Assumptions \ref{assup:1}, \ref{assup:2}, \ref{assup:3} hold.
Then there exist sufficiently large constants $C_2, C_3, K_1, M_0>1$ and a sufficiently small constant $c>0$ such that, whenever
\[
T \ge C_2 d\log^4 T,
\qquad
N=\lceil C_3K \log T\rceil,
\qquad
2\le K\le c\log T,
\]
the output of Algorithm~\ref{alg:gs_sampler} satisfies
\begin{equation*}
\mathrm{TV}(q_1,p_1)
\lesssim d^2K\log^2 T
\left(\frac{M_0Kd\log^2 T}{T}\right)^K
+ d\log^{5/2}T\sqrt{\log K}\,
\varepsilon_{\mathrm{Jacobi}}
+ d^{3/2}\log^{7/2}T\log K\,
\varepsilon_{\mathrm{score}}
+ T^{-K_1}.
\end{equation*}
\end{theorem}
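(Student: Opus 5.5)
We follow the density-tracking framework of \cite{li2024sharp,li2025faster}. The one-step map $Y_t\mapsto Y_{t-1}=x^{(N)}_{\tau_{t,K-1}}$ is a deterministic function $\Phi_t$. We compare $p_{t-1}=(\Phi_t)_\#p_t$ with $q_{t-1}=(\Phi^\star_t)_\#q_t$, where $\Phi^\star_t$ is the exact probability flow ODE map on $[\tau_{t,K-1},\tau_{t,0}]$. Through the change-of-variables formula
\[
p_{\Phi_t(x)}=p_t(x)\det\bigl(\partial\Phi_t/\partial x\bigr)^{-1},
\]
the log-density ratio $\log(q_{t-1}/p_{t-1})$ evaluated along the trajectory accumulates two kinds of one-step errors: the position error $\|\Phi_t(x)-\Phi^\star_t(x)\|_2$ and the Jacobian error $\|\partial\Phi_t/\partial x-\partial\Phi^\star_t/\partial x\|$, the latter entering via $\log\det(I+\Delta)\approx\operatorname{tr}\Delta$ with a second-order remainder. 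As in \cite{li2025faster}, summing these over $t$ on a high-probability typical set $\mathcal E$ and bounding $\mathbb P(\mathcal E^c)$ separately gives
\[
\mathrm{TV}(q_1,p_1)\lesssim \mathrm{TV}(q_T,p_T)+\sum_t\mathbb E_{q_t}\bigl[\text{one-step errors}\bigr]+\mathbb P(\mathcal E^c).
\]
The initialization term is $\mathrm{TV}(q_T,\mathcal N(0,I_d))\le T^{-K_1}$ by the schedule \eqref{eq:learnrate}, since $\bar\alpha_T\le T^{-c}$ and $\mathbb E\|X_0\|_2^2\le T^{2c_R}$.

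For the typical set we cannot use bounded support, so we use Assumption~\ref{assup:1} instead. Since $X_t=\sqrt{\bar\alpha_t}X_0+\sqrt{1-\bar\alpha_t}\bar W_t$, the set of $x$ with $-\log q_t(x)\lesssim d\log T$ has $q_t$-mass at least $1-T^{-K_1}$. This follows from Gaussian concentration together with Markov's inequality on $\|X_0\|_2$ at level $T^{c_R+K_1}$. On this set the standard estimates give
\[
\|s^\star_\tau(x)\|_2\lesssim\sqrt{d\log T/(1-\bar\alpha)},\qquad \|\partial_x s^\star_\tau\|\lesssim d\log T/(1-\bar\alpha),
\]
and higher time derivatives of $\tau\mapsto(1-\tau)^{-3/2}s^\star_\tau(Y^{\mathrm{ode}}_\tau)$ satisfy $\|\partial_\tau^k\|\lesssim (Cd\log T)^{k}k!\,/\,(1-\bar\alpha_t)^{k}\times(\cdots)$. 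These derivative bounds come from Tweedie-type representations of $s^\star$ as conditional expectations, using conditional cumulants that are controlled on the typical set; the $k!$ growth is the key feature. Combining them with the Chebyshev--Lobatto interpolation error formula (Lebesgue constant $O(\log K)$, nodal polynomial $\le 2(h/4)^{K}$) and the step size $h=\tau_{t,0}-\tau_{t,K-1}\lesssim(1-\bar\alpha_t)\log T/T$, the interpolation error per step is $\lesssim(M_0Kd\log^2T/T)^K$ times prefactors. Because Lobatto nodes have Lebesgue constant $O(\log K)$ rather than $2^K$, $K$ may grow up to $c\log T$ without blow-up.

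Next we analyze the Gauss--Seidel iteration \eqref{eq:gs_update} as a fixed-point map. Let $e^{(n)}_j$ be the difference between $x^{(n)}_{\tau_{t,j}}$ and the fixed point of the interpolated ODE. The score Lipschitz bound $Ld\log T/(1-\bar\alpha_t)$ and $|\gamma_{t,k}|\lesssim h\log K$ give a contraction
\[
\max_j\|e^{(n+1)}_j\|\le\rho\,\max_j\|e^{(n)}_j\|,\qquad \rho\lesssim Kd\log^2T/T\cdot\log K\le T^{-c'}.
\]
The Gauss--Seidel ordering makes the error matrix strictly triangular-plus-small, and the contraction holds provided $T\ge C_2d\log^4T$. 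Hence $N=C_3K\log T$ iterations reduce the initialization error $\|Y_t-Y^{\mathrm{ode}}\|$ below $T^{-K}$, which is negligible. The same argument applied to the differentiated recursion, obtained by differentiating \eqref{eq:gs_update} in $Y_t$, controls $\partial\Phi_t/\partial x$. In that recursion the score and Jacobian estimation errors enter linearly with weights $\gamma_{t,k}\lesssim h\log K$. After Cauchy--Schwarz over the $T(N+1)K$ evaluations and summation over $t$, these produce the $\varepsilon_{\mathrm{score}}$ and $\varepsilon_{\mathrm{Jacobi}}$ terms with the stated $d$, $\log T$ and $\log K$ factors. For the score term, the extra $\sqrt d$ comes from passing a position error through $\operatorname{tr}\partial s^\star$.

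\textbf{Main obstacle.} The hardest step will be the factorial-type bounds on the $K$-th $\tau$-derivative of $s^\star_\tau(Y^{\mathrm{ode}}_\tau)$ uniformly in $K\asymp\log T$, established on a typical set defined only through second moments. These bounds are needed along the ODE trajectory, so the high-probability set must be shown to be (approximately) invariant under the flow. Our first attempt would be to bound the derivatives through conditional moments of $X_0$ given $X_\tau=x$, using the posterior-concentration estimates of \cite{li2024sharp}. Where those estimates required bounded support, we would replace them by truncating $X_0$ at radius $T^{c_R+K_1}$ and absorbing the truncated mass into the $T^{-K_1}$ term.
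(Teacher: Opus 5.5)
Your outline follows the paper's proof in all essentials: the density-ratio recursion of Lemma~\ref{lemma2} driven by position and Jacobian errors, Chebyshev--Lobatto interpolation with Lebesgue constant $O(\log K)$ (Lemma~\ref{lemma8}) and nodal bound $4^{1-K}h^K$, factorial derivative bounds obtained from Tweedie's formula (Lemma~\ref{lemma13}), a Gauss--Seidel contraction over $N=\lceil C_3K\log T\rceil$ sweeps together with the differentiated recursion (Lemmas~\ref{lemma3}--\ref{lemma4}), and Markov's inequality on the second moment in place of bounded support (Lemmas~\ref{lemma1} and~\ref{lemma5}). Several intermediate claims are wrong as stated, although none of them breaks the plan.

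First, the per-sweep contraction factor is not $T^{-c'}$. When $d$ is near the limit $T/(C_2\log^4T)$ it is only polylogarithmically small. A constant factor is all you need: the paper uses $1/3$ per sweep on squared errors, and $3^{-N}\le T^{-C_3K\log 3}$ already removes the initialization error.

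Second, the $\tau$-derivatives of $(1-\tau)^{-3/2}s^\star_\tau(x^\star_\tau)$ scale like $\bigl(d\theta\log T/(\tau(1-\tau))\bigr)^k$, not $(d\log T/\tau)^k$. The prefactor $(1-\tau)^{-3/2}$ alone produces $(1-\tau)^{-k}$, and $1-\tau=\bar\alpha_t$ is tiny for $t$ near $T$. You must therefore pair the derivative bound with the sharper step bound $h\le 8c_1\tau(1-\tau)\log T/T$ from \eqref{eq:23f}, not with $h\lesssim\tau\log T/T$.

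Also, your $K$-free bound $(Cd\log T)^kk!$ is stronger than anything the paper proves. Its induction yields $(MK)^kk!$, where the $K$ is needed to keep factors like $(1+c/K)^k$ bounded for $k\le K$. That $K^k$ is exactly the source of the $K$ in $M_0Kd\log^2T/T$.

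Third, what you call the main obstacle is not one. At any $y$ with $-\log p_{\bar X_\tau}(y)\le\theta d\log T$, Bayes' rule with the Gaussian kernel and $\tau\ge T^{-c_0}$ already gives posterior concentration of $\sqrt{1-\tau}X_0$ around $y$ at scale $\sqrt{\theta d\tau\log T}$, with no assumption on $X_0$ (Lemma~\ref{lemma10}). No truncation is needed. Truncating would in fact change the score $s^\star$ that Assumptions~\ref{assup:2}--\ref{assup:3} refer to.

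Assumption~\ref{assup:1} enters only through the initialization and through volume-plus-Markov tail estimates. In Lemma~\ref{lemma5}, the $q_1$-mass of $\{q_1\le e^{-c_6d\log T}\}$ is at most $e^{-c_6d\log T}$ times the volume of the ball of radius $T^{c_R+K_1}\sqrt d$, plus a Markov tail. Gaussian concentration plays no role there.

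Finally, the piece your sketch leaves out is typicality of the Gauss--Seidel iterates themselves, not just of the exact flow. The Lipschitz bounds on $s^\star$ and $\partial s^\star/\partial x$ that drive the contraction hold only on segments $[x^{(n)}_{\tau_{t,i}},x^\star_{\tau_{t,i}}]$ where $-\log p_{\bar X_{\tau_{t,i}}}\lesssim d\log T$. The paper proves this by an induction over nodes and sweeps run jointly with the error bound (Lemma~\ref{lemma14}). It then propagates typicality backward from $q_1(x_1)\ge e^{-c_6d\log T}$ (Lemma~\ref{lemma15}) and separates out trajectories with large accumulated score error ($\mathcal I_2$, Lemma~\ref{lemma7}).

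For the same reason, compare the iterates directly with $x^\star_{\tau_{t,j}}$, as the paper does, rather than with a fixed point of the interpolated scheme. A fixed point would need its own existence argument, and if defined with the learned $s$ it would require regularity of $s$ that is not assumed.
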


\begin{remark}
Theorem~\ref{main_theorem} shows that the total variation error consists of four components: the discretization error arising from the higher-order approximation of the continuous reverse process, the Jacobian estimation error, the score estimation error, and the initialization error. We next compare our result with the most closely related existing work.\begin{enumerate}
    \item \textbf{Iteration complexity.} 
    Given the Jacobian estimation error $\varepsilon_{\mathrm{Jacobi}}$ and the score estimation error $\varepsilon_{\mathrm{score}}$,  for any target accuracy $\varepsilon \ge d\log^{5/2}T\sqrt{\log K}\,\varepsilon_{\mathrm{Jacobi}}
+
d^{3/2}\log^{7/2}T\log K\,\varepsilon_{\mathrm{score}}$,  Algorithm \ref{alg:gs_sampler} achieves 
$\mathrm{TV}(q_1,p_1)\lesssim \varepsilon$ within
\begin{equation} \label{eq:iteration_complexity}
T= \widetilde{O}\left(
\max\left\{
d^{1+2/K}\varepsilon^{-1/K},
\varepsilon^{-1/K_1}
\right\}
\right).
\end{equation}
In particular, our analysis allows $K\asymp\log T$.  In this regime,
$d^{1+2/K}=d^{1+o_T(1)}$, and
$\varepsilon^{-1/K}=\varepsilon^{-o_T(1)}$,
so that $T$ can be small as
\[
\widetilde{O}\bigl(d^{1+o_T(1)}\varepsilon^{-1/K_1}\bigr)
\]
which significantly improves upon the best known result $\widetilde{O}\left(\max\left\{d^2,\,d^{1+2/K}\varepsilon^{-1/K} \right\}
\right)$ in  \cite{li2025faster}, where \(K\) is treated as a fixed constant.. 
\item  \textbf{Mild target distribution and score assumptions.}  Our convergence guarantee requires only a finite second-moment condition on the target distribution, and therefore applies to a broad class of data distributions. In particular, it avoids the bounded-support assumption $\mathbb{P}(\|X_0\|_2\le T^{c_R})=1$ imposed in \cite{li2025faster}, as well as other restrictive assumptions such as smoothness or log-concavity. Moreover, compared to other higher-order ODE-based analyses (e.g., \cite{huang2024pflow}), our framework does not require bounded higher-order derivatives or higher-order Lipschitz continuity of the score function. Instead, it relies only on first-order accuracy of the score and its Jacobian along the sampler trajectory. 
\end{enumerate}
\end{remark}

\begin{remark}
We note that the bounds on the Jacobian estimation error and the score estimation error in Theorem~\ref{main_theorem} are larger than those reported in \cite{li2025faster}. This difference arises because our analysis only requires \(T \gtrsim  d \log^4 T\), whereas \cite{li2025faster} assumes the stronger condition \(T \gtrsim  d^2 \log^3 T\). In fact, if one imposes the stronger requirement \(T \gtrsim  d^2 \log^4 T\), our bounds on both the Jacobian and score estimation errors match those in \cite{li2025faster}.
\end{remark}

\section{Proof of the main result} \label{sec:promainresult}
In this section, we present the proofs of the main results. Throughout, we assume that
\begin{equation}\label{eq:small_error}
\varepsilon_{\mathrm{score}}
\lesssim (d^{3/2} \log^{9/2} T)^{-1}
\quad \text{and} \quad
\varepsilon_{\mathrm{Jacobi}}
\lesssim (d \log^3 T)^{-1}.
\end{equation}
Otherwise, Theorem~\ref{main_theorem} follows immediately, since the total variation distance is always bounded above by \(1\). Recall from \eqref{eq:X_T} that \(\operatorname{Law}(X_T)\) is close to
\(\mathcal{N}(0,I_d)\), and that Algorithm~\ref{alg:gs_sampler} is initialized
with \(Y_T \sim \mathcal{N}(0,I_d)\). The following lemma shows that the
distributions of \(X_T\) and \(Y_T\) are indeed close in total variation
distance, with an error that can be controlled explicitly.

\begin{lemma}\cite[Lemma 2]{li2024minimal}
\label{lemma1}
Suppose that \(T\) is sufficiently large, and let \(K_1>0\) be any fixed
constant. Then
\begin{equation}
\bigl(\operatorname{TV}(q_T,p_T)\bigr)^2
\le \frac{1}{2}\operatorname{KL}(q_T \| p_T)
\lesssim \frac{1}{T^{2K_1}}.
\label{first_err}
\end{equation}
\end{lemma}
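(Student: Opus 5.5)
The plan is to prove the first inequality by Pinsker's inequality, and the second by bounding \(\operatorname{KL}(q_T\|p_T)\) through convexity of KL over the randomness in \(X_0\). Since \(p_T=\mathcal N(0,I_d)\), Pinsker gives \(\operatorname{TV}(q_T,p_T)^2\le \frac12\operatorname{KL}(q_T\|p_T)\) directly, so the work is in the KL bound. The original source \cite{li2024minimal} assumes bounded support, so I would redo the argument using only the second moment in Assumption~\ref{assup:1}.

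\textbf{Step 1 (conditional Gaussian KL).} Write \(q_T\) as the mixture over \(X_0\sim p_{\mathrm{data}}\) of the conditionals \(\mathcal N(\sqrt{\bar\alpha_T}x_0,(1-\bar\alpha_T)I_d)\). Joint convexity of KL, with the second argument \(\mathcal N(0,I_d)\) fixed, gives
\[
\operatorname{KL}(q_T\|p_T)\le \mathbb E_{X_0}\,\operatorname{KL}\bigl(\mathcal N(\sqrt{\bar\alpha_T}X_0,(1-\bar\alpha_T)I_d)\,\big\|\,\mathcal N(0,I_d)\bigr).
\]
The Gaussian KL formula evaluates the right-hand side as
\[
\tfrac12\Bigl(\bar\alpha_T\,\mathbb E\|X_0\|_2^2 + d\bigl(-\bar\alpha_T-\log(1-\bar\alpha_T)\bigr)\Bigr)\lesssim \bar\alpha_T T^{2c_R}+d\,\bar\alpha_T^2,
\]
where the last step uses \(\bar\alpha_T\le 1/2\) and Assumption~\ref{assup:1}. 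Only the second moment enters here, which is why bounded support is not needed.

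\textbf{Step 2 (smallness of \(\bar\alpha_T\); the main step).} Under the schedule \eqref{eq:beta1}--\eqref{eq:learnrate}, the factor \(\beta_1(1+c_1\log T/T)^t\) reaches \(1\) once \(t\ge t_0\) with \(t_0\approx c_0T/c_1\). Choosing \(c_1\ge 2c_0\) ensures \(t_0\le T/2\). For all \(t\in[T/2,T]\) we then have \(\beta_t=c_1\log T/T\), so
\[
\bar\alpha_T\le \prod_{t>T/2}(1-\beta_t)\le \exp\Bigl(-\tfrac{T}{2}\cdot\tfrac{c_1\log T}{T}\Bigr)=T^{-c_1/2}.
\]
I expect this step to be the main obstacle. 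Making it rigorous requires care with the inequality \(\log(1+x)\ge x/2\) when bounding \(t_0\), and with verifying that the minimum in \eqref{eq:learnrate} is attained by \(1\) on the whole second half of the time steps.

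\textbf{Step 3 (combine).} The hypothesis \(T\ge C_2 d\log^4T\) gives \(d\le T\). Therefore
\[
\operatorname{KL}(q_T\|p_T)\lesssim T^{2c_R-c_1/2}+T^{1-c_1}.
\]
Taking \(c_1\) sufficiently large relative to \(c_R\) and \(K_1\), for instance \(c_1\ge 4c_R+4K_1+2\), makes this at most \(T^{-2K_1}\). This is consistent with \(c_0,c_1\) being "sufficiently large numerical constants" in the schedule, and it completes \eqref{first_err}.
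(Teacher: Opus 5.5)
Your argument is correct, but it does not follow the paper, because the paper gives no argument of its own for this lemma. It simply states it as a citation of \cite[Lemma 2]{li2024minimal}, and separately imports $\bar\alpha_T\le T^{-C_1}$ in \eqref{eq:23d} from \cite{li2025faster}.

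The citation buys brevity. Your route buys self-containment: Pinsker, convexity of $\operatorname{KL}$ over the mixture in $X_0$, the closed-form divergence $\tfrac12\bar\alpha_T\|x_0\|_2^2+\tfrac d2\bigl(-\bar\alpha_T-\log(1-\bar\alpha_T)\bigr)$, and a direct estimate of $\bar\alpha_T$ from the schedule. It shows that the second moment in Assumption~\ref{assup:1} is exactly what the bound uses, whatever hypotheses the cited source works under.

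Your route also exposes two dependencies that the citation hides. Both are necessary, not artifacts of your proof:
\begin{itemize}
\item For fixed schedule constants, $\bar\alpha_T$ is only polynomially small, roughly $T^{-(c_1-c_0)}$. A point mass at a unit vector gives $\operatorname{KL}(q_T\|p_T)\ge\tfrac12\bar\alpha_T$. So ``any fixed $K_1$'' can only hold if $c_1$ is chosen large depending on $K_1$ and $c_R$, which is what you do.
\item A point mass at the origin gives $\operatorname{KL}(q_T\|p_T)\asymp d\bar\alpha_T^2$. So ``$T$ sufficiently large'' must include $T$ being polynomially large relative to $d$, which you supply through $T\ge C_2 d\log^4 T$.
\end{itemize}

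Two small remarks. Step~2 can be replaced by simply invoking \eqref{eq:23d}. And if you use the crude bound $\log(1+x)\ge x/2$ to locate $t_0$, you need $c_1\ge 4c_0$ rather than $c_1\ge 2c_0$; this is harmless since the schedule constants are free.
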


\subsection{Main steps for proving Theorem 1}
Before proceeding, we introduce some notation. Fix an initial value
\(Y_T \in \mathbb{R}^d\) in Algorithm~\ref{alg:gs_sampler}, and for each
\(1 \le t \le T\), define
\[
x_t := x_{\tau_{t,0}}^{(0)},
\]
where \(x_{\tau_{t,0}}^{(0)}\) is generated by
Algorithm~\ref{alg:gs_sampler}. On each interval
\(\bigl[1-\bar{\alpha}_t,\,1-\bar{\alpha}_{t-1}\bigr]\), let
\(\{x_\tau^\star\}_{\tau \in [\tau_{t,K-1},\tau_{t,0}]}\) denote the exact
probability flow ODE trajectory associated with \eqref{ODE}, with
\(\tau_0 := 1-\bar{\alpha}_t\) and initial condition
\(Y_{\tau_0}^{\mathrm{ode}} := x_t\), and set
\[
x_{t-1}^\star := x_{\tau_{t,K-1}}^\star.
\]
Likewise, let \(Y_{t-1}^\star\) denote the solution to \eqref{ODE} at
\(\tau = 1-\bar{\alpha}_{t-1}\) with 
\(\tau_0 := 1-\bar{\alpha}_t\) and initial condition
\(Y_{\tau_0}^{\mathrm{ode}} := Y_t\). With this notation, we now outline the
proof strategy, which consists of several steps.

\paragraph{Step 1: controlling the density ratio.}
With Lemma \ref{lemma1} in hand, the  basic idea to establish the convergence rate is to connect  the density ratio $\frac{p_{X_{t-1}}(x_{t-1})}{p_{Y_{t-1}}(x_{t-1})}$ at the $(t-1)$-th step with its counterpart part  $\frac{p_{X_t}(x_t)}{p_{Y_t}(x_t)}$ at the $t$-th step. To this end, we begin with the identity
\begin{align}
\frac{p_{X_{t-1}}(x_{t-1})}{p_{Y_{t-1}}(x_{t-1})}
&=
\frac{
p_{\sqrt{\alpha_t}X_{t-1}}(\sqrt{\alpha_t}x_{t-1})
}{
p_{\sqrt{\alpha_t}X_{t-1}}(\sqrt{\alpha_t}x_{t-1}^{\star})
}
\cdot
\frac{
p_{\sqrt{\alpha_t}X_{t-1}}(\sqrt{\alpha_t}x_{t-1}^{\star})
}{
p_{X_t}(x_t)
}
\notag\\
&\quad\cdot
\left[
\frac{
p_{\sqrt{\alpha_t}Y_{t-1}}(\sqrt{\alpha_t}x_{t-1})
}{
p_{\sqrt{\alpha_t}Y_{t-1}^{\star}}(\sqrt{\alpha_t}x_{t-1}^{\star})
}
\cdot
\frac{
p_{\sqrt{\alpha_t}Y_{t-1}^{\star}}(\sqrt{\alpha_t}x_{t-1}^{\star})
}{
p_{Y_t}(x_t)
}
\right]^{-1}
\cdot
\frac{p_{X_t}(x_t)}{p_{Y_t}(x_t)}.
\label{eq:step1_density_decomp_gs}
\end{align}
Since $Y_{t-1}^{\star}$ is obtained by transporting $Y_t$ through the exact
probability-flow ODE \eqref{ODE_score} on $[\tau_{t,K-1},\tau_{t,0}]$, it implies
\begin{equation}
\frac{
p_{\sqrt{\alpha_t}Y_{t-1}^{\star}}(\sqrt{\alpha_t}x_{t-1}^{\star})
}{
p_{Y_t}(x_t)
}
=
\frac{
p_{\sqrt{\alpha_t}X_{t-1}}(\sqrt{\alpha_t}x_{t-1}^{\star})
}{
p_{X_t}(x_t)
}.
\label{eq:exact_transport_cancel_gs}
\end{equation}
Substituting \eqref{eq:exact_transport_cancel_gs} into \eqref{eq:step1_density_decomp_gs} gives
\begin{equation*}
\frac{p_{X_{t-1}}(x_{t-1})}{p_{Y_{t-1}}(x_{t-1})}
=
\frac{
p_{\sqrt{\alpha_t}X_{t-1}}(\sqrt{\alpha_t}x_{t-1})
}{
p_{\sqrt{\alpha_t}X_{t-1}}(\sqrt{\alpha_t}x_{t-1}^{\star})
}
\cdot
\frac{
p_{\sqrt{\alpha_t}Y_{t-1}^{\star}}(\sqrt{\alpha_t}x_{t-1}^{\star})
}{
p_{\sqrt{\alpha_t}Y_{t-1}}(\sqrt{\alpha_t}x_{t-1})
}
\cdot
\frac{p_{X_t}(x_t)}{p_{Y_t}(x_t)}.
\end{equation*}

It has been shown in Lemma~6 in \cite{li2025faster} that the right hand side can be well controlled by the distance between $x_{t-1}$ and $x_{t-1}^\star$ and their Jacobian matrices, as stated below.
\begin{lemma}[Lemma~6 in \cite{li2025faster}] \label{lemma2}
For each $t=2,\ldots,T$,  it holds
\begin{enumerate}
\item[(a)]
\begin{equation}
\frac{
p_{\sqrt{\alpha_t}X_{t-1}}\!\left(\sqrt{\alpha_t}\,x_{t-1}\right)
}{
p_{\sqrt{\alpha_t}X_{t-1}}\!\left(\sqrt{\alpha_t}\,x_{t-1}^\star\right)
}
=
\exp\!\left(
O\!\left(
\frac{\|x_{t-1}-x_{t-1}^\star\|_2^2}{1-\bar{\alpha}_{t-1}}
+
\sqrt{\frac{d\log T}{1-\bar{\alpha}_{t-1}}}\|x_{t-1}-x_{t-1}^\star\|_2\,
\right)
\right).
\label{eq:lemma2_a}
\end{equation}

\item[(b)] Assume  that
\begin{equation}
\|(J_{\tau_{t,K-1}}^{(N)})^{-1}\|\lesssim 1,
\label{eq:lemma2_b_cond}
\end{equation}
then
\begin{equation}
\frac{
p_{\sqrt{\alpha_t}Y_{t-1}^{\star}}\!\left(\sqrt{\alpha_t}\,x_{t-1}^\star\right)
}{
p_{\sqrt{\alpha_t}Y_{t-1}}\!\left(\sqrt{\alpha_t}\,x_{t-1}\right)
}
=
\exp\!\left(
O\!\left(
d\,\|J_{\tau_{t,K-1}}^{(N)}-J_{\tau_{t,K-1}}^\star\|
\right)
\right).
\label{eq:lemma2_b}
\end{equation}
\end{enumerate}
Here, 
$J_{\tau_{t,i}}^{(n)} :=
\frac{\partial (x_{\tau_{t,i}}^{(n)}/\sqrt{1-\tau_{t,i}})}
{\partial (x_{\tau_{t,0}}/\sqrt{1-\tau_{t,0}})},
J_{\tau_{t,i}}^\star
:=
\frac{\partial (x_{\tau_{t,i}}^\star/\sqrt{1-\tau_{t,i}})}
{\partial (x_{\tau_{t,0}}^\star/\sqrt{1-\tau_{t,0}})}$ and we regard $x_{\tau_{t,i}}^{(n)}$ and $x_{\tau_{t,i}}^\star$ as functions of $x_{\tau_{t,0}}$, since the Gauss--Seidel iterates and the exact ODE trajectory are determined by the rescaled initial value
$x_{\tau_{t,0}}/\sqrt{1-\tau_{t,0}}$.
\end{lemma}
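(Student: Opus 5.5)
The statement has two parts of different nature: (a) is a pointwise comparison of a Gaussian-smoothed density at two nearby points, and (b) is a change-of-variables identity followed by a determinant perturbation bound. I would prove them separately.

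\textbf{Part (a).} Write $\sqrt{\alpha_t}X_{t-1} = \sqrt{\bar\alpha_t}X_0 + \sigma W$ with $\sigma^2 := \alpha_t-\bar\alpha_t \asymp 1-\bar\alpha_{t-1}$ (using $\alpha_t \ge 1/2$ under the schedule \eqref{eq:beta1}--\eqref{eq:learnrate}). Set $a=\sqrt{\alpha_t}x_{t-1}$ and $b=\sqrt{\alpha_t}x_{t-1}^\star$.

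1. Express the density as a Gaussian mixture, $p(y)=\mathbb{E}_{X_0}\bigl[(2\pi\sigma^2)^{-d/2}\exp(-\|y-\sqrt{\bar\alpha_t}X_0\|_2^2/(2\sigma^2))\bigr]$. This gives the exact identity
\[
\frac{p(a)}{p(b)}=\mathbb{E}\Bigl[\exp\Bigl(-\frac{\|a-b\|_2^2+2\langle a-b,\,b-\sqrt{\bar\alpha_t}X_0\rangle}{2\sigma^2}\Bigr)\,\Big|\, \sqrt{\alpha_t}X_{t-1}=b\Bigr],
\]
where the expectation is over the posterior of $X_0$ given the point $b$.
2. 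Bound the posterior residual: for $b$ in a typical set, show $\|b-\sqrt{\bar\alpha_t}X_0\|_2\lesssim\sigma\sqrt{d\log T}$ with posterior probability at least $1-T^{-100}$. This is the standard argument, a union bound over a net of $X_0$ values combined with Gaussian concentration.
3. Plug this in. It yields both the upper and the lower bound $\exp\bigl(O(\|a-b\|_2^2/\sigma^2+\sqrt{d\log T/\sigma^2}\,\|a-b\|_2)\bigr)$; the lower bound follows from Jensen's inequality.
4. Dispose of the exponentially small atypical posterior mass. It is absorbed because $\|a-b\|_2$ is itself small on the event considered.

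I expect the main obstacle to be step 2 under only the second-moment Assumption~\ref{assup:1}. In \cite{li2025faster}, bounded support makes the net over $X_0$ finite. Here I would first truncate: $\mathbb{P}(\|X_0\|_2>T^{c_R+K_1})\le T^{-2K_1}$ by Markov's inequality. I would then run the net argument on the ball of radius $T^{c_R+K_1}$, and check that the truncated and untruncated posteriors differ negligibly at typical points $b$. The resulting exceptional set of $x_t$ has $q_t$-probability $O(T^{-K_1})$, which is consistent with the $T^{-K_1}$ term of Theorem~\ref{main_theorem}.

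\textbf{Part (b).} View $Y_{t-1}=F(Y_t)$ and $Y_{t-1}^\star=F^\star(Y_t)$, where $F$ is the deterministic map defined by the $N$ Gauss--Seidel sweeps \eqref{eq:gs_update} and $F^\star$ is the exact flow of \eqref{ODE}.

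1. Establish that $F$ is injective near $x_t$, with Jacobian (in rescaled variables) $J^{(N)}_{\tau_{t,K-1}}$. This is the main technical obstacle. I would get it from \eqref{eq:lemma2_b_cond} plus the fact that $F(x)/\sqrt{1-\tau_{t,K-1}}-x/\sqrt{1-\tau_{t,0}}$ is a small Lipschitz perturbation. The perturbation is small because the step $\tau_{t,0}-\tau_{t,K-1}$ is $O(\log T/T)$ relative to $1-\tau$ and the score Jacobians are controlled, so a small perturbation of a scaled identity is injective.
2. Apply the change-of-variables formula for both maps. The rescaling factors $\sqrt{\alpha_t}$ and $\sqrt{1-\tau}$ appear identically in both and cancel, leaving
\[
\frac{p_{\sqrt{\alpha_t}Y_{t-1}^\star}(\sqrt{\alpha_t}x^\star_{t-1})}{p_{\sqrt{\alpha_t}Y_{t-1}}(\sqrt{\alpha_t}x_{t-1})}=\frac{|\det J^{(N)}_{\tau_{t,K-1}}|}{|\det J^\star_{\tau_{t,K-1}}|}.
\]
3. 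Set $\Delta=J^{(N)}-J^\star$ and write $\det J^\star/\det J^{(N)}=\det\bigl(I-(J^{(N)})^{-1}\Delta\bigr)$. When $\|(J^{(N)})^{-1}\Delta\|\le 1/2$, each eigenvalue contributes $O(\|\Delta\|)$ to the log-determinant, so $|\log\det(\cdot)|\le d\cdot O(\|(J^{(N)})^{-1}\|\,\|\Delta\|)=O(d\|\Delta\|)$ by \eqref{eq:lemma2_b_cond}.
4. In the complementary regime $d\|\Delta\|\gtrsim 1$, the claimed bound is vacuous at the level it is used, since TV is at most $1$, so nothing further is needed there.
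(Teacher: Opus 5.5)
Your architecture is the standard one behind this imported lemma: a posterior representation plus Jensen for (a), and change of variables plus a log-determinant perturbation for (b). The paper gives no proof of its own, only the citation to \cite{li2025faster}. You are also right that (a) needs typicality of $b$, which the statement omits. On $E_t$ it comes from $E_{3,t}$ at $\tau=\tau_{t,K-1}$, since $p_{\sqrt{\alpha_t}X_{t-1}}(\sqrt{\alpha_t}x)=\alpha_t^{-d/2}p_{X_{t-1}}(x)$.

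Your step 2 of (a), however, misidentifies the difficulty, and the proposed fix fails. The posterior tail needs no net and no support or moment assumption. By Bayes' rule, for every $b$,
\[
\mathbb P\bigl(\|b-\sqrt{\bar\alpha_t}X_0\|_2\ge r \,\big|\, \sqrt{\alpha_t}X_{t-1}=b\bigr)\le \frac{(2\pi\sigma^2)^{-d/2}e^{-r^2/(2\sigma^2)}}{p_{\sqrt{\alpha_t}X_{t-1}}(b)},
\]
and $-\log p(b)\lesssim d\log T$ together with $\sigma^2\gtrsim T^{-c_0}$ already gives the sub-Gaussian tail beyond $r\asymp\sigma\sqrt{d\log T}$. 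This is Lemma~\ref{lemma10} at $\tau=1-\bar\alpha_{t-1}$. Nets and volume bounds are needed only to show that atypical points are rare, which the paper does separately via Markov in Lemma~\ref{lemma5}. Your truncation route cannot give the pointwise claim. Markov bounds the prior mass outside the ball by $T^{-2K_1}$, but conditioning on $b$ can inflate prior mass by up to $(2\pi\sigma^2)^{-d/2}/p(b)=e^{O(d\log T)}$. For example, $p_{\mathrm{data}}=(1-\varepsilon)\delta_0+\varepsilon\,\delta_{Le_1}$ with $\varepsilon=T^{-2K_1-2}$ and $L=T^{c_R+K_1+1}$ satisfies Assumption~\ref{assup:1}. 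For small $t$, the point $b=\sqrt{\bar\alpha_t}Le_1$ is typical, yet its posterior sits essentially at $Le_1$, outside your ball. Retreating to an exceptional set of $x_t$ of probability $T^{-K_1}$ proves a different statement. That statement does not hold pointwise on $E_t$, which is how \eqref{eq:step1_generic_recursion_gs} uses Lemma~\ref{lemma2}; atypical points are handled separately in Lemmas~\ref{lemma5} and~\ref{lemma7}.

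In (b), your step 1 does not support step 2. Local injectivity is automatic: the maps are $C^1$, and $J^{(N)}_{\tau_{t,K-1}}=\partial(\sqrt{\alpha_t}x_{t-1})/\partial x_t$ is invertible by \eqref{eq:lemma2_b_cond}, so the inverse function theorem applies. But it only yields $p_{\sqrt{\alpha_t}Y_{t-1}}(\sqrt{\alpha_t}x_{t-1})\ge p_{Y_t}(x_t)/|\det J^{(N)}_{\tau_{t,K-1}}|$, because other preimages of $x_{t-1}$ add mass. That gives only the upper half of \eqref{eq:lemma2_b}. The identity in your step 2, and with it the lower half, requires $x_t$ to be the unique preimage, i.e.\ the one-step map must be one-to-one; this is exactly what the change-of-variables formula presupposes. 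Your Lipschitz-perturbation argument cannot establish this globally, because nothing controls $\partial s_\tau/\partial x$ away from the iterates. Assumption~\ref{assup:3} is only an $L^2$ average along trajectories, and even $\partial s^\star_\tau/\partial x$ is bounded only at typical points (Lemma~\ref{lemma11}). You must either state injectivity as a hypothesis or settle for the one-sided bound.

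Finally, step 4 is not an argument: the lemma is a pointwise bound inside the recursion, and $\mathrm{TV}\le 1$ does not dispose of a regime. It is also unnecessary. Set $\Delta:=J^{(N)}_{\tau_{t,K-1}}-J^\star_{\tau_{t,K-1}}$ and apply $|\det(I+E)|\le(1+\|E\|)^d$ to $E=(J^\star_{\tau_{t,K-1}})^{-1}\Delta$ and to $E=-(J^{(N)}_{\tau_{t,K-1}})^{-1}\Delta$. This gives $\exp(O(d\|\Delta\|))$ in both directions for every $\Delta$, once also $\|(J^\star_{\tau_{t,K-1}})^{-1}\|\lesssim 1$, which holds at typical points by \eqref{eq:Jstar}.
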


In view of Lemma~\ref{lemma2}, we obtain
\begin{equation}
\frac{p_{X_{t-1}}(x_{t-1})}{p_{Y_{t-1}}(x_{t-1})}
=
\exp\!\left(
O\!\left(
\frac{\|x_{t-1}-x_{t-1}^\star\|_2^2}{1-\bar{\alpha}_{t-1}}
+
\sqrt{\frac{d\log T}{1-\bar{\alpha}_{t-1}}}\|x_{t-1}-x_{t-1}^\star\|_2\,
+
d\,\|J_{\tau_{t,K-1}}^{(N)}-J_{\tau_{t,K-1}}^\star\|
\right)
\right)
\frac{p_{X_t}(x_t)}{p_{Y_t}(x_t)}.
\label{eq:step1_generic_recursion_gs}
\end{equation}

By Algorithm~\ref{alg:gs_sampler},  we know $x_{t-1}=x_{\tau_{t-1, 0}}^{(0)}=x_{\tau_{t, K-1}}^{(N)}$.
Therefore, it suffices to control
\[
\|x_{\tau_{t, K-1}}^{(N)}-x_{\tau_{t,K-1}}^\star\|_2
\qquad\text{and}\qquad
\|J_{\tau_{t,K-1}}^{(N)}-J_{\tau_{t,K-1}}^\star\|.
\]
A key observation is that, for the probability ODE flow, once $Y_T$ in Algorithm~\ref{alg:gs_sampler} is fixed, all quantities \(x_{\tau_{t,i}}^{(n)}\), \(x_{\tau_{t,i}}^\star\), and \(x_\tau^\star\) are fully determined. In the initialization stage, we take $Y_T \sim \mathcal{N}(0,I_d)$. We will show that for ``typical'' points $Y_T \in \mathbb{R}^d$, the above two quantities are sufficiently small. We say that a point $Y_T \in \R^d$ is \emph{typical} if
\begin{equation} \label{eq:gs_event_E}
Y_T \in E_t := E_{1,t} \cap E_{2,t} \cap E_{3,t},
\end{equation}
where the events $E_{1,t}$, $E_{2,t}$, and $E_{3,t}$ are defined as follows:
\begin{equation*}
E_{1,t} := \dkh{ Y_T :  \frac{d\sqrt{\log K}\log T}{T}
\sqrt{\sum_{i=0}^{K-1}\sum_{n=0}^{N}
\Bigl(\varepsilon_{\mathrm{Jacobi},t,i}^{(n)}(x_{\tau_{t,i}}^{(n)})\Bigr)^2}
+
\frac{d^{3/2}\log K\log^2T}{T}
\sqrt{\sum_{i=0}^{K-1}\sum_{n=0}^{N}
\Bigl(\varepsilon_{\mathrm{score},t,i}^{(n)}(x_{\tau_{t,i}}^{(n)})\Bigr)^2} \le c_3}
\end{equation*}
\begin{equation*}
E_{2,t} := \dkh{ Y_T\in\mathbb R^d: 
-\log p_{\bar X_{\tau_{t,i}}}
\!\left(
\lambda x_{\tau_{t,i}}^{(n)}
+(1-\lambda)x_{\tau_{t,i}}^\star
\right)
\le C_4 d\log T,
\text{for all } 0\le i\le K-1,\ 0\le n\le N,\ \lambda\in[0,1] }
\end{equation*}
\begin{equation*}
E_{3,t} := \dkh{ Y_T\in\mathbb R^d:
-\log p_{\bar X_\tau}(x_\tau^\star)
\le C_4 d\log T,  \quad
\text{for all }\quad\tau\in[\tau_{t,K-1},\tau_{t,0}] }.
\end{equation*}
Here, \(C_4 > 0\) is a sufficiently large absolute constant,  \(c_3 > 0\) is a sufficiently small constant, and we denote
\begin{equation} \label{def_vareps}
\varepsilon_{\mathrm{score},t,i}^{(n)}
\bigl(x_{\tau_{t,i}}^{(n)}\bigr)
:=
\left\|
s_{\tau_{t,i}}\!\left(x_{\tau_{t,i}}^{(n)}\right)
-
s_{\tau_{t,i}}^\star\!\left(x_{\tau_{t,i}}^{(n)}\right)
\right\|_2,\quad
\varepsilon_{\mathrm{Jacobi},t,i}^{(n)}
\bigl(x_{\tau_{t,i}}^{(n)}\bigr)
:=
\left\|
\frac{\partial s_{\tau_{t,i}}\!\left(x_{\tau_{t,i}}^{(n)}\right)}{\partial x}
-
\frac{\partial s_{\tau_{t,i}}^\star\!\left(x_{\tau_{t,i}}^{(n)}\right)}{\partial x}
\right\|.
\end{equation}

The next lemma shows that under the event $E_t$, the terms $\|x_{t-1}-x_{\tau_{t,K-1}}^\star\|_2$ are well controlled. 

\begin{lemma} \label{lemma3}
For each $t$, on the event $E_t$,  when $K\lesssim \log T$ and $T \ge C_2 d\log^4 T$ for some sufficiently large constant $C_2>0$, it holds
\begin{equation*}
\|x_{\tau_{t,i}}^{(N)}-x_{\tau_{t,i}}^\star\|_2^2
\lesssim \log K\,\tau_{t,0}^2 \frac{\log ^2 T}{T^2}
\sum_{n=0}^{N}\sum_{j=0}^{K-1}
\Bigl(\varepsilon_{\mathrm{score}, t, j}^{(n)}(x_{\tau_{t,j}}^{(n)})\Bigr)^2
+  \frac{d \tau_{t,0} \log ^3 T}{T^2}
\left(\frac{M_0Kd \log ^2 T}{T}\right)^{2K}
\end{equation*}
for all $0\le i\le K-1$ and where $M_0>1$ is a large enough constant.
\end{lemma}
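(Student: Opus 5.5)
We work in the rescaled variables $u^{(n)}_j := x^{(n)}_{\tau_{t,j}}/\sqrt{1-\tau_{t,j}}$ and $u^\star_j := x^\star_{\tau_{t,j}}/\sqrt{1-\tau_{t,j}}$, and write $\Delta^{(n)}_j := u^{(n)}_j - u^\star_j$. The factor $\sqrt{1-\tau}$ is bounded above and below on the interval, so it can be reinserted at the end.

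\emph{Reference relation.} Integrate \eqref{ODE} from $\tau_{t,0}$ to $\tau_{t,j}$. Write the integrand $(1-\tau)^{-3/2}s^\star_\tau(x^\star_\tau)$ as its degree-$(K-1)$ Chebyshev--Lobatto interpolant plus a remainder. This gives the exact identity
\[
u^\star_j = u^\star_0 + \sum_{k=0}^{K-1}\frac{\gamma_{t,k}(\tau_{t,j})}{2(1-\tau_{t,k})^{3/2}}s^\star_{\tau_{t,k}}(x^\star_{\tau_{t,k}}) + \xi_j .
\]
Subtracting this from the Gauss--Seidel update \eqref{eq:gs_update} (with $u^{(n)}_0=u^\star_0$) gives
\[
\Delta^{(n+1)}_j=\sum_{k<j}a_{jk}\,\delta^{(n+1)}_k+\sum_{k\ge j}a_{jk}\,\delta^{(n)}_k+\sum_k a_{jk}\,e^{(n)}_k-\xi_j ,
\]
where:
\begin{itemize}
\item $a_{jk}=\gamma_{t,k}(\tau_{t,j})/(2(1-\tau_{t,k})^{3/2})$;
\item $\delta_k=s^\star(x^{(n)}_{\tau_{t,k}})-s^\star(x^\star_{\tau_{t,k}})$;
\item $e^{(n)}_k=\varepsilon^{(n)}_{\mathrm{score},t,k}$ is the score error.
\end{itemize}

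\emph{Bounding the three ingredients.} We need three estimates.
\begin{enumerate}
\item On the typical-set events $E_{2,t}$ and $E_{3,t}$, the Jacobian of the true score along the segments between $x^{(n)}$ and $x^\star$ is $\lesssim d\log T/(1-\tau)$. This is the standard bound from the log-density level set, as used in \cite{li2025faster}, and under Assumption~\ref{assup:1} it goes through via Tweedie's second-moment argument. Hence $\|\delta_k\|\lesssim \frac{d\log T}{\tau_{t,0}}\|\Delta_k\|$.
\item For the Chebyshev--Lobatto Lagrange basis, $\sum_k|\psi_{t,k}|\lesssim\log K$ (Lebesgue constant). Hence $\sum_k|\gamma_{t,k}(\tau_{t,j})|\lesssim \log K\,(\tau_{t,0}-\tau_{t,K-1})\lesssim \log K\,\tau_{t,0}\log T/T$, using the schedule \eqref{eq:learnrate}.
\item Combining these, the Gauss--Seidel iteration matrix has norm at most $\rho\lesssim \log K\, d\log^2T/T\le 1/2$ once $T\ge C_2 d\log^4 T$. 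The recursion is then a contraction. Because Gauss--Seidel triangular solves also inherit $\rho$, we get
\[
\max_j\|\Delta^{(n+1)}_j\|\le\rho\max_j\|\Delta^{(n)}_j\|+\text{(score term)}+\max_j\|\xi_j\|.
\]
\end{enumerate}
Unrolling over $N=\lceil C_3K\log T\rceil$ steps, the initial error $\|\Delta^{(0)}\|$ is damped by $2^{-N}\le T^{-C_3K}$, which is negligible. We bound $\|\Delta^{(0)}\|=\|u^\star_j-u^\star_0\|$ crudely via $E_{3,t}$ by $\mathrm{poly}(T)$. The score contributions sum geometrically; by Cauchy--Schwarz they yield the term $\log K\,\tau_{t,0}^2\frac{\log^2T}{T^2}\sum_{n,j}(\varepsilon^{(n)}_{\mathrm{score},t,j})^2$ after squaring.

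\emph{Main obstacle: the interpolation remainder.} The hard step is showing
\[
\|\xi_j\|^2\lesssim \frac{d\tau_{t,0}\log^3T}{T^2}\Bigl(\frac{M_0Kd\log^2T}{T}\Bigr)^{2K}.
\]
The Chebyshev error formula gives
\[
\|f-\Pi f\|\le \frac{2}{K!}\Bigl(\frac{h}{4}\Bigr)^K\sup\|f^{(K)}\|,
\]
with $h\lesssim \tau_{t,0}\log T/T$. So we need a bound on the $K$-th time derivative of $\tau\mapsto(1-\tau)^{-3/2}s^\star_\tau(x^\star_\tau)$ along the exact flow. I would take it from the derivative bounds of \cite{li2025faster} (their Lemma on $\partial_\tau^k$ of the score along the ODE): on $E_{3,t}$ these give $\|f^{(k)}\|\lesssim \sqrt{d\log T/\tau}\,(C k\,d\log T/\tau)^{k}$. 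This is where the only real work lies. Under a moment assumption rather than bounded support, the conditional-moment bounds for $X_0$ given $\bar X_\tau=x$ must be rederived from $-\log p\le C_4d\log T$, and the $k$-dependence must be tracked as $k^k$ so that $K\asymp\log T$ is allowed. Given such a bound, $K^K/K!\le e^K$ is absorbed into $M_0^K$. Integrating over the interval contributes another factor $h$, and squaring gives the stated second term. Finally, multiplying back by $(1-\tau_{t,i})\le1$ yields the claim for all $i$.
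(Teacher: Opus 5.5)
Your proposal is essentially the paper's proof. It uses the same split into a score-error term, a Lipschitz-propagation term controlled on $E_{2,t}$ via Lemma~\ref{lemma11}, and a Chebyshev--Lobatto interpolation remainder; the same Lebesgue-constant contraction closed by induction over the nodes within each Gauss--Seidel sweep and damped geometrically over $N\asymp K\log T$ sweeps; and a $K$-th derivative estimate along the exact flow, which the paper supplies as Lemma~\ref{lemma13}, whose weaker $(MK)^k k!$ growth still suffices because the statement keeps $K$ inside the power.

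When you write it out, keep the $(1-\tau)$ factors you dropped: the step bound $\tau_{t,0}-\tau_{t,K-1}\lesssim \tau_{t,0}(1-\tau_{t,0})\log T/T$ from \eqref{eq:23f} is what makes $\rho\lesssim \log K\, d\log^2 T/T$ hold uniformly in $t$ even when $1-\tau_{t,0}=\bar\alpha_t$ is polynomially small. Also use the uniform bound $|\gamma_{t,k}(\tau_{t,i})|\le 2(\tau_{t,0}-\tau_{t,i})$ from \eqref{eq:23g} to get $\log K$ rather than $\log^2 K$ in the score term.
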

\begin{proof}
See Section \ref{lemma3_proof}.
\end{proof}

Similarly, under the event $E_t$, the terms $\|J_{\tau_{t,i}}^{(N)}-J_{\tau_{t,i}}^\star\|^2$ are well controlled. 

\begin{lemma}  \label{lemma4}
For each $t$, on the event $E_t$, when $K\lesssim \log T$ and $T \ge C_2 d\log^4 T$ for some sufficiently large constant $C_2>0$, the Jacobian  satisfies
\begin{equation} \label{lemma4_lemma2_condition}
\left\|(J_{\tau_{t, K-1}}^{(N)})^{-1}\right\| \lesssim 1
\end{equation}
and
\begin{align} \label{eq:lemma4_gs_bound}
\|J_{\tau_{t,i}}^{(N)}-J_{\tau_{t,i}}^\star\|^2
&\lesssim  K\log^2 K\,\frac{d^3\log^7 T}{T^4} \sum_{n=0}^{N}\sum_{j=0}^{K-1}
\Bigl(\varepsilon_{\mathrm{score},t,j}^{(n)}(x_{\tau_{t,j}}^{(n)})\Bigr)^2
\nonumber\\
&\quad+
 \log K \tau_{t,0}^2 \frac{\log^2 T}{T^2}
\sum_{n=0}^{N}\sum_{j=0}^{K-1}
\Bigl(\varepsilon_{\mathrm{Jacobi},t,j}^{(n)}(x_{\tau_{t,j}}^{(n)})\Bigr)^2
+  \left(\frac{M_0Kd\log^2 T}{T}\right)^{2K+2}
\end{align}
for all $0\le i\le K-1$ and where $M_0>1$ is a large enough constant.
\end{lemma}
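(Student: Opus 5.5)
Differentiating the Gauss--Seidel recursion \eqref{eq:gs_update} and the exact rescaled ODE \eqref{ODE_score} with respect to the rescaled initial value $u:=x_{\tau_{t,0}}/\sqrt{1-\tau_{t,0}}$ turns the Jacobian analysis into a linear Gauss--Seidel iteration with perturbed coefficients, which we then treat in parallel with the proof of Lemma~\ref{lemma3}. Writing $\xi_k:=\frac{1}{2(1-\tau_{t,k})}$, the iterate Jacobians satisfy
\[
J_{\tau_{t,j}}^{(n+1)} = I + \sum_{k<j}\gamma_{t,k}(\tau_{t,j})\,\xi_k\,\tfrac{\partial s_{\tau_{t,k}}}{\partial x}\bigl(x^{(n+1)}_{\tau_{t,k}}\bigr)J^{(n+1)}_{\tau_{t,k}} + \sum_{k\ge j}\gamma_{t,k}(\tau_{t,j})\,\xi_k\,\tfrac{\partial s_{\tau_{t,k}}}{\partial x}\bigl(x^{(n)}_{\tau_{t,k}}\bigr)J^{(n)}_{\tau_{t,k}},
\]
with $J^{(0)}_{\tau_{t,j}}=I$. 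The exact $J^\star_\tau$ solves the variational equation $\frac{d}{d\tau}J^\star_\tau=-\frac{1}{2(1-\tau)}\frac{\partial s^\star_\tau}{\partial x}(x^\star_\tau)J^\star_\tau$, so
\[
J^\star_{\tau_{t,j}} = I + \sum_k\gamma_{t,k}(\tau_{t,j})\,\xi_k\,\tfrac{\partial s^\star}{\partial x}\bigl(x^\star_{\tau_{t,k}}\bigr)J^\star_{\tau_{t,k}} + R_j ,
\]
where $R_j$ is the Chebyshev--Lobatto interpolation remainder for $\tau\mapsto(1-\tau)^{-1}\frac{\partial s^\star_\tau}{\partial x}(x^\star_\tau)J^\star_\tau$. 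Subtracting, the error $\Delta^{(n)}_j:=J^{(n)}_{\tau_{t,j}}-J^\star_{\tau_{t,j}}$ obeys the same Gauss--Seidel recursion as the state error in Lemma~\ref{lemma3}, driven by three forcing terms: (i) the Jacobian estimation error $\varepsilon^{(n)}_{\mathrm{Jacobi},t,k}$ multiplied by $\|J^{(n)}\|$; (ii) the drift mismatch $\frac{\partial s^\star}{\partial x}(x^{(n)}_{\tau_{t,k}})-\frac{\partial s^\star}{\partial x}(x^\star_{\tau_{t,k}})$; (iii) the remainder $R_j$.

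\textbf{Ingredients from the events $E_{2,t},E_{3,t}$.} Under these events, standard bounds (as in \cite{li2025faster}, appendix lemmas) give $\|\frac{\partial s^\star_\tau}{\partial x}\|\lesssim d\log T/\tau$ along the trajectory, together with a Lipschitz-type bound along the segment $\lambda x^{(n)}+(1-\lambda)x^\star$ of the form $\|\frac{\partial s^\star}{\partial x}(x)-\frac{\partial s^\star}{\partial x}(x^\star)\|\lesssim (d\log T/\tau)^{3/2}\|x-x^\star\|_2$. We also use that the step size satisfies $\tau_{t,0}-\tau_{t,K-1}\lesssim \tau_{t,0}\log T/T$, and that Chebyshev--Lobatto nodes give Lebesgue constant $O(\log K)$, hence $\sum_k|\gamma_{t,k}(\tau_{t,j})|\lesssim \log K\,\tau_{t,0}\log T/T$. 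Consequently the iteration matrix has contraction factor $\rho\lesssim \log K\, d\log^2T/T\ll 1$ once $T\ge C_2 d\log^4 T$. Running $N=\lceil C_3K\log T\rceil$ sweeps therefore reduces the initial error $\|\Delta^{(0)}\|\lesssim 1$ to $\rho^N\le T^{-2K}$, well below the target. The same estimate, by induction over sweeps, shows that $\|J^{(n)}_{\tau_{t,j}}\|\lesssim 1$ for all $n$ and $j$. Similarly, $\|J^\star\|$ and $\|(J^\star)^{-1}\|$ are $1+O(d\log^2 T/T)$ via Grönwall, so \eqref{lemma4_lemma2_condition} follows by a Neumann series once $\|\Delta^{(N)}\|$ is shown to be small; this smallness is guaranteed by $E_{1,t}$ together with \eqref{eq:small_error}.

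\textbf{Summing the forcing terms.} After unrolling the contraction and applying Cauchy--Schwarz over $(n,k)$:
\begin{itemize}
\item Term (i) contributes $\log K\,\tau_{t,0}^2\frac{\log^2T}{T^2}\sum\varepsilon_{\mathrm{Jacobi}}^2$.
\item Term (ii) is bounded by $\bigl(\log K\,\frac{d\log^2 T}{T}\bigr)^2\frac{d\log T}{\tau}\,\|x^{(n)}-x^\star\|^2$. Here we use Lemma~\ref{lemma3}, but applied to every iterate $n$, not only to $n=N$; we will need the intermediate version from its proof, which tracks the sweep-wise errors. Its score part then yields the stated $K\log^2K\,\frac{d^3\log^7T}{T^4}\sum\varepsilon_{\mathrm{score}}^2$, with the extra factor $K$ coming from summing over sweeps weighted by the geometric factors, and its discretization part is absorbed into $(M_0Kd\log^2T/T)^{2K+2}$.
\item Term (iii) is the remainder.
\end{itemize}

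\textbf{Main obstacle: the remainder $R_j$.} Bounding $R_j$ requires controlling the $K$-th $\tau$-derivative of $(1-\tau)^{-1}\frac{\partial s^\star_\tau}{\partial x}(x^\star_\tau)J^\star_\tau$ without any smoothness assumption beyond that of the true score. The plan is to use the Tweedie/posterior-moment representation: derivatives along the ODE flow are expressible through conditional cumulants of $X_0$ given $x^\star_\tau$. Under $E_{3,t}$ these cumulants are bounded by $(C k d\log T/\tau)^{k/2}$-type bounds, using only the second moment (Assumption~\ref{assup:1}) through the typical-set argument. This yields a $k$-th derivative of order $(M k d\log T/\tau)^{k+1}$ for the Jacobian integrand, one power higher than for the score itself. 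Combined with the Chebyshev error factor $2^{1-K}(\Delta\tau/2)^K/K!$ and Stirling's formula, this gives $\|R_j\|\lesssim (M_0Kd\log^2T/T)^{K+1}$; squaring produces the final term. Keeping the constant $M_0$ independent of $K$ as $K$ grows like $\log T$ is the delicate point here.
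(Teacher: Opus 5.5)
Your proposal follows the paper's proof of Lemma~\ref{lemma4} essentially step for step. You differentiate the Gauss--Seidel update and the exact variational equation, then split the error into three parts: the Jacobian-estimation term, the drift mismatch (the Lipschitz bound of Lemma~\ref{lemma11} on $E_{2,t}$ combined with the sweep-wise bound from the proof of Lemma~\ref{lemma3}), and the Chebyshev--Lagrange remainder (controlled as in Lemma~\ref{lemma13}). You then contract over the $N$ sweeps using $\|J^{(n)}_{\tau_{t,j}}\|\lesssim 1$ and conclude \eqref{lemma4_lemma2_condition} by a Neumann series. Your cruder initialization $\|J^{(0)}_{\tau_{t,j}}-J^\star_{\tau_{t,j}}\|\lesssim 1$ is harmless because $N=\lceil C_3K\log T\rceil$; note that $J^{(0)}_{\tau_{t,j}}=\sqrt{(1-\tau_{t,0})/(1-\tau_{t,j})}\,I$ rather than $I$.

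The one thing to correct is a missing factor $1-\tau$. Because of the weights $1/(2(1-\tau_{t,k}))$, you need the step bound $\tau_{t,0}-\tau_{t,K-1}\lesssim \tau_{t,0}(1-\tau_{t,0})\log T/T$ from \eqref{eq:23f}, and a cost of $d\log T/(\tau(1-\tau))$ per $\tau$-derivative. Without this, your contraction factor and your Grönwall bound on $J^\star$ do not follow near $\tau\to 1$. Your stated derivative bound $(Mkd\log T/\tau)^{k+1}$ already fails at $k=0$ when $1-\tau\ll 1/(d\log T)$. Its $k!$-free form is also more than you need, since the $k!$ in Lemma~\ref{lemma13} simply cancels the $1/K!$ of the Lagrange remainder.
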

\begin{proof}
See Section \ref{lemma3_proof}.
\end{proof}

Putting Lemma~\ref{lemma3} and  Lemma \ref{lemma4} into 
\eqref{eq:step1_generic_recursion_gs}, we obtain that on the Event $E_t$, it holds
\begin{equation}
\frac{p_{X_{t-1}}(x_{t-1})}{p_{Y_{t-1}}(x_{t-1})}
=
\exp\!\left(
O\!\left(
\widetilde{\xi}_t(x_t)
+
d\left(\frac{M_0Kd\log^2T}{T}\right)^{K+1}
\right)
\right)
\frac{p_{X_t}(x_t)}{p_{Y_t}(x_t)},
\label{eq:gs_density_ratio_final_compact}
\end{equation}
provided that \(T\gtrsim d\log^4T\), where
\begin{equation}
\widetilde{\xi}_t(x_t)
:=
\frac{d\sqrt{\log K}\log T}{T}
\sqrt{\sum_{i=0}^{K-1}\sum_{n=0}^{N}
\Bigl(\varepsilon_{\mathrm{Jacobi},t,i}^{(n)}(x_{\tau_{t,i}}^{(n)})\Bigr)^2}
+
\frac{d^{3/2}\log K\log^2T}{T}
\sqrt{\sum_{i=0}^{K-1}\sum_{n=0}^{N}
\Bigl(\varepsilon_{\mathrm{score},t,i}^{(n)}(x_{\tau_{t,i}}^{(n)})\Bigr)^2}.
\label{eq:gs_xi_t_definition}
\end{equation}

\paragraph{Step 2: decomposing the total variation distance.}
Define 
\begin{equation} \label{eq:def_E_step2_lemma}
\mathcal{E} := \left\{ x \in \mathbb{R}^d :\; q_1(x) > \max\bigl\{p_1(x),\, \exp(-c_6 d \log T)\bigr\} \right\}.
\end{equation}
Informally, the set $\mathcal{E}$ collects those typical points $x \in \mathbb{R}^d$ at which $q_1(x)$ is not exponentially small. The next lemma shows that the total variation distance between $q_1$ and $p_1$ is primarily controlled by their discrepancy on $\mathcal{E}$.

\begin{lemma}
\label{lemma5}
Assume that Assumption~\ref{assup:1} holds. Then, for any fixed constant
\(K_1>0\), if \(c_6\) in \eqref{eq:def_E_step2_lemma} is chosen sufficiently
large, it holds that
\begin{equation*}
\operatorname{TV}(q_1,p_1)
\le
\mathbb{E}_{Y_1 \sim p_1}\left[
\left(\frac{q_1(Y_1)}{p_1(Y_1)}-1\right)\mathbf{1}\{Y_1 \in \mathcal E\}
\right]
+
C T^{-2K_1}
+
\exp(-c_2 d\log T),
\end{equation*}
where \(C>0\) is an absolute constant and \(c_2>0\) is a constant depending on \(c_6\), \(c_R\).
\end{lemma}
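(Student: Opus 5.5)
The plan is to split the total variation integral according to the sign of $q_1-p_1$ and according to whether $q_1$ is exponentially small. Since $\mathrm{TV}(q_1,p_1)=\int (q_1-p_1)^+\,\mathrm{d}x$, we write
\[
\operatorname{TV}(q_1,p_1)=\int_{\mathcal E}(q_1-p_1)\,\mathrm{d}x+\int_{\{q_1>p_1\}\setminus\mathcal E}(q_1-p_1)\,\mathrm{d}x .
\]
For the first integral, note that $p_1>0$ a.e.\ on $\mathcal E$, because $Y_1$ is a smooth pushforward of a Gaussian under Assumption~\ref{assup:3}. If $p_1$ vanishes somewhere on $\mathcal E$, that piece is absorbed into the second term. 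On the remaining set we rewrite
\[
\int_{\mathcal E}(q_1-p_1)\,\mathrm{d}x=\mathbb E_{Y_1\sim p_1}\Bigl[\Bigl(\tfrac{q_1(Y_1)}{p_1(Y_1)}-1\Bigr)\mathbf 1\{Y_1\in\mathcal E\}\Bigr],
\]
which is exactly the main term in Lemma~\ref{lemma5}. It therefore remains to bound the mass of $q_1$ on the set where $q_1(x)\le \exp(-c_6 d\log T)$, since on $\{q_1>p_1\}\setminus\mathcal E$ we have $q_1-p_1\le q_1\le \exp(-c_6d\log T)$.

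To bound $\int_{\{q_1\le e^{-c_6 d\log T}\}}q_1\,\mathrm{d}x$, split further by a radius $R=T^{c_7}$ with $c_7>c_R$ large.

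\emph{Inside the ball} $\{\|x\|_2\le R\}$, the mass is at most
\[
\exp(-c_6 d\log T)\cdot\mathrm{vol}(B_R)\le \exp\bigl(-c_6 d\log T + d\log(CR)\bigr)\le \exp(-c_2 d\log T),
\]
once $c_6$ is chosen large compared with $c_7$.

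\emph{Outside the ball}, use the representation $X_1=\sqrt{\alpha_1}X_0+\sqrt{1-\alpha_1}W$ and Markov/Chebyshev with Assumption~\ref{assup:1}:
\[
\mathbb P(\|X_1\|_2>R)\le \frac{\mathbb E\|X_1\|_2^2}{R^2}\le \frac{T^{2c_R}+d}{T^{2c_7}}.
\]
Here $d\lesssim T$ because we are in the regime $T\gtrsim d\log^4T$. Choosing $c_7\ge c_R+K_1+1$ makes this at most $CT^{-2K_1}$.

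Combining the three pieces gives the stated bound. The main subtlety is the region outside the ball. With only a second-moment assumption there is no sub-Gaussian tail, so this piece yields only the polynomial $T^{-2K_1}$ term rather than an exponential one. This is exactly why the statement carries both $CT^{-2K_1}$ and $\exp(-c_2d\log T)$, and why $c_2$ depends on $c_6$ and $c_R$ through the choice of $R$.

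A minor point to check is the claim that $p_1>0$ a.e.\ on $\mathcal E$. If this is delicate, one can avoid it by writing $\int_{\mathcal E}(q_1-p_1)\,\mathrm{d}x\le\int_{\mathcal E\cap\{p_1>0\}}(q_1-p_1)\,\mathrm{d}x+\int_{\mathcal E\cap\{p_1=0\}}q_1\,\mathrm{d}x$. Alternatively, one can simply note that the law of $Y_1$ has a density with respect to Lebesgue measure, and the expectation form is then interpreted as the integral over $\{p_1>0\}$.
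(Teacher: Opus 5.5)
Your proposal is correct and follows the paper's proof essentially step for step. It uses the same split of $\{q_1>p_1\}$ into $\mathcal E$ and the region $\{q_1\le e^{-c_6 d\log T}\}$, then the same ball cut with a volume bound inside and a second-moment Markov bound outside. The paper takes radius $T^{c_R+K_1}\sqrt d$, which makes the Markov step work without your appeal to $d\lesssim T$; that condition is not among the lemma's hypotheses.

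One correction: the part of $\mathcal E$ where $p_1=0$ cannot be ``absorbed into the second term,'' since $q_1>e^{-c_6 d\log T}$ on $\mathcal E$. Also, a smooth pushforward of a Gaussian need not have an everywhere-positive density. Instead, as the paper does, read the expectation simply as notation for $\int_{\mathcal E}(q_1-p_1)\,\mathrm{d}x$, not as an integral over $\{p_1>0\}$.
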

\begin{proof}
The proof is deferred to Appendix~\ref{lemma5_proof}.
\end{proof}

Define
\begin{equation}
\widetilde{S}_t(Y_T)
:=
\sum_{k=2}^t \widetilde{\xi}_k(x_k),
\qquad t \ge 2,
\qquad
\widetilde{S}_1(Y_T):=0,
\label{eq:def_tilde_St}
\end{equation}
where \(\{x_k\}\) are generated by Algorithm~\ref{alg:gs_sampler} with initial
sample \(Y_T\), and \(\widetilde{\xi}_k(x_k)\) is defined in
\eqref{eq:gs_xi_t_definition}. Next, define
\begin{equation}
\mathcal I_1 :=
\left\{ Y_T\in\mathbb R^d : \widetilde{S}_T(Y_T)\le c_3 \right\},
\label{eq:def_I1}
\end{equation}
where \(c_3>0\) is a sufficiently small constant. Thus, \(\mathcal I_1\)
represents the set of initial points for which the accumulated score error
along the backward trajectory is well controlled.
With the above notation, Lemma~\ref{lemma5} immediately yields
\begin{eqnarray}
\operatorname{TV}(q_1,p_1)
&\le& \mathbb{E}_{Y_1 \sim p_1}\left[\left(\frac{q_1\left(Y_1\right)}{p_1\left(Y_1\right)}-1\right) \mathbf{1}\left\{Y_1 \in \mathcal E\right\}\right]+C T^{-2K_1}+\exp(-c_2\,d\log T)
\nonumber\\ 
&=&
\underbrace{
\mathbb{E}_{Y_T \sim p_T}\!\left[
\left(\frac{q_1(Y_1)}{p_1(Y_1)}-1\right)
\mathbf{1}\{Y_1\in \mathcal E,\;Y_T\in\mathcal I_1\}
\right]
}_{\alpha_1}
+
\underbrace{
\mathbb E_{Y_T\sim p_T}\!\left[
\left(\frac{q_1(Y_1)}{p_1(Y_1)}-1\right)
\mathbf 1\{Y_1\in \mathcal E,\;Y_T\in \mathcal I_2\}
\right]
}_{\alpha_2}
\nonumber\\
&&+C T^{-2K_1}+\exp(-c_2\,d\log T),
\label{eq:alp12}
\end{eqnarray}
where the second equality follows from the fact that \(Y_1\) is determined
entirely by \(Y_T\) through the deterministic update rule, and
\(\mathcal I_2 := \mathbb{R}^d \setminus \mathcal I_1\). As we will show later,
for any \(Y_T \in \mathcal I_1\) such that \(x_1 \in \mathcal E\), or
equivalently \(Y_1 \in \mathcal E\), one can conclude that \(Y_T\) is a typical
point in \(E_t\). In this case, we may apply
\eqref{eq:gs_density_ratio_final_compact} to control \(\alpha_1\). On the other
hand, for any \(Y_T \in \mathcal I_2\) with \(Y_1 \in \mathcal E\), we will show
that its contribution to the total variation distance is negligible, so that
\(\alpha_2\) is also well controlled.

\paragraph{Step 3: bounding \(\alpha_1\) and $\alpha_2$.}
For any fixed point $Y_T\in\mathbb{R}^d$, define
\begin{equation}
\tau(Y_T) :=
\max\left\{
2\le t\le T+1:\;
\widetilde{S}_{t-1}(Y_T)\le c_3
\right\}.
\label{eq:def_tau_xt}
\end{equation}

 The next lemma shows that we can control the density ratio up to the $(\tau(Y_T) -1)$-th iteration.

\begin{lemma} \label{lemma6}
For any \(Y_T\in \mathcal{I}_1\) such that \(x_1\in \mathcal E\), the following hold.   For every \(2\le t\le \tau(Y_T)-1\),
\begin{equation}
\frac{q_1(x_1)}{p_1(x_1)} =
\left(
1+
O\!\left(
\sum_{t<\tau(Y_T)}\widetilde{\xi}_t(x_t)
+
d^2\log^2 T\,K
\left(
\frac{M_0Kd\log^2 T}{T}
\right)^K
\right)
\right)
\frac{q_{\tau(Y_T)-1}(x_{\tau(Y_T)-1})}{p_{\tau(Y_T)-1}(x_{\tau(Y_T)-1})},
\label{eq:lemma6_first_here}
\end{equation}
and
\begin{equation}
\frac{q_\ell(x_\ell)}{2p_\ell(x_\ell)}
\le
\frac{q_1(x_1)}{p_1(x_1)}
\le
\frac{2q_\ell(x_\ell)}{p_\ell(x_\ell)},
\qquad
\forall\,\ell\le \tau(Y_T)-1.
\label{eq:lemma6_second_here}
\end{equation}
Here, \(M_0>1\) is a sufficiently large constant. 
\end{lemma}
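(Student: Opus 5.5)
The plan is to telescope the one-step recursion \eqref{eq:gs_density_ratio_final_compact} from $t=2$ up to $t=\tau(Y_T)-1$. For this we must show that $Y_T$ is typical, i.e. $Y_T\in E_t$, for every $t<\tau(Y_T)$.

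\textbf{Event $E_{1,t}$.} For $t<\tau(Y_T)$ we have $\widetilde{\xi}_t(x_t)\le \widetilde S_{\tau(Y_T)-1}(Y_T)\le c_3$ by the definition of $\tau(Y_T)$. This is exactly the defining condition of $E_{1,t}$, so $E_{1,t}$ holds automatically.

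\textbf{Events $E_{2,t}$ and $E_{3,t}$.} These are the real content, and I would argue by backward induction on $\ell$ from $1$ upward. The hypothesis $x_1\in\mathcal E$ gives $q_1(x_1)>\exp(-c_6 d\log T)$ and $q_1(x_1)>p_1(x_1)$. Suppose the ratio bound \eqref{eq:lemma6_second_here} is already known at level $\ell$. Then
\[
q_\ell(x_\ell)\ \ge\ \tfrac12 \frac{q_1(x_1)}{p_1(x_1)}\,p_\ell(x_\ell)\ \ge\ \tfrac12\, p_\ell(x_\ell).
\]
We still need a lower bound on $p_\ell(x_\ell)$ itself. For this I would use that the sampler map $Y_\ell\mapsto Y_1$ is a deterministic, invertible flow-like map whose Jacobian is controlled by Lemma~\ref{lemma4}, with $\|J^{-1}\|\lesssim 1$ and $J$ close to $J^\star$. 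Changing variables along the chain relates $p_\ell(x_\ell)$ to $p_1(x_1)$ up to a factor $\exp(O(d\log T))$. Equivalently, we can compare $q_\ell(x_\ell)$ to $q_1(x_1)$ through the exact ODE and the closeness $x_{t-1}\approx x^\star_{t-1}$ from Lemma~\ref{lemma3}.

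With $-\log q_\ell(x_\ell)\lesssim d\log T$ in hand, I would invoke the standard facts, as in \cite{li2025faster}, that along the exact ODE trajectory the log-density changes by at most $O(d\log T)$ over one interval. Combining this with Lemma~\ref{lemma3} (the iterates lie within $o(\sqrt{1-\bar\alpha})$ of $x^\star$) and a Lipschitz-type bound on $\log p_{\bar X_\tau}$ near typical points yields $E_{2,\ell+1}$ and $E_{3,\ell+1}$ with constant $C_4$ depending on $c_6$. Assumption~\ref{assup:1} only enters through these typical-set density estimates, which need a moment bound rather than bounded support.

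\textbf{Telescoping.} Once $Y_T\in E_t$ for all $2\le t\le \tau(Y_T)-1$, we multiply the identities \eqref{eq:gs_density_ratio_final_compact} to get
\[
\frac{q_1(x_1)}{p_1(x_1)}=\exp\Bigl(O\Bigl(\textstyle\sum_{t<\tau}\widetilde{\xi}_t + Td\,\bigl(\tfrac{M_0Kd\log^2T}{T}\bigr)^{K+1}\Bigr)\Bigr)\frac{q_{\tau-1}(x_{\tau-1})}{p_{\tau-1}(x_{\tau-1})}.
\]
The second term is at most $d^2K\log^2T\,\bigl(\tfrac{M_0Kd\log^2T}{T}\bigr)^K$, after enlarging $M_0$ if needed. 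The exponent is $O(c_3)+o(1)$, which is small because $T\ge C_2 d\log^4T$ and $K\le c\log T$ force $(M_0Kd\log^2T/T)^K\le T^{-\Omega(1)}$. Using $e^{u}=1+O(u)$ for small $u$ gives \eqref{eq:lemma6_first_here}. Applying the same product over $2\le t\le \ell$ instead keeps the exponent below $\log 2$, which gives \eqref{eq:lemma6_second_here}. This is also what feeds the induction in the previous step, so the induction and the telescoping are carried out simultaneously, level by level.

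\textbf{Main obstacle.} The hard step is verifying $E_{2,t}\cap E_{3,t}$ purely from $x_1\in\mathcal E$, i.e. propagating a lower bound on $q$ backward along the trajectory. I would first try the ODE log-density derivative bound $\frac{d}{d\tau}\log p_{\bar X_\tau}(x^\star_\tau)=O(d/(1-\tau))$ type estimates, integrated over the short interval. For the interpolating segments $\lambda x^{(n)}+(1-\lambda)x^\star$, I would use a score bound $\|s^\star_\tau\|\lesssim\sqrt{d\log T/\tau}$ on high-density regions.
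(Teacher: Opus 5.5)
\textbf{Verdict: the proposal has a genuine gap.} The telescoping part is fine, but the step you flag as the main obstacle, certifying $E_{2,t}\cap E_{3,t}$ for all $t<\tau(Y_T)$, is not established by your sketch.

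\textbf{What is correct.} $E_{1,t}$ follows from $\widetilde\xi_t(x_t)\le \widetilde S_{\tau(Y_T)-1}(Y_T)\le c_3$. Once $Y_T\in E_t$ for all $2\le t\le \tau(Y_T)-1$, multiplying \eqref{eq:gs_density_ratio_final_compact} and absorbing $Td\,(M_0Kd\log^2T/T)^{K+1}$ gives both displays, as in the paper.

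\textbf{Where the typicality argument breaks.}

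\emph{Lower bound on $q_\ell(x_\ell)$.} After invoking \eqref{eq:lemma6_second_here} you discard $q_1(x_1)/p_1(x_1)$ and try to lower-bound $p_\ell(x_\ell)$ through $p_1(x_1)$. But $x_1\in\mathcal E$ only gives $p_1(x_1)<q_1(x_1)$, never a lower bound on $p_1(x_1)$. Keeping the factor, you would need $|\det(\partial x_1/\partial x_\ell)|\ge e^{-O(d\log T)}$ over up to $T$ steps. Lemma~\ref{lemma4}'s $\|(J^{(N)}_{\tau_{t,K-1}})^{-1}\|\lesssim 1$ only gives $e^{-O(d)}$ per step, hence $e^{-O(dT)}$ overall.

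\emph{Circularity.} Lemmas~\ref{lemma3} and~\ref{lemma4}, and \eqref{eq:gs_density_ratio_final_compact} itself, are proved on $E_t$. The $G_{12}$ bound in Lemma~\ref{lemma3} uses the Jacobian-Lipschitz estimate along the segments $\lambda x^{(n)}_{\tau_{t,j}}+(1-\lambda)x^\star_{\tau_{t,j}}$, which is exactly $E_{2,t}$. The $G_2$ bound uses bounded $\theta$, which is $E_{3,t}$. So deriving $E_{2,t}$ from the closeness in Lemma~\ref{lemma3} is circular.

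\emph{Indexing.} $E_{\ell+1}$ concerns the interval starting at $x_{\ell+1}$. It therefore needs a bound on $q_{\ell+1}(x_{\ell+1})$, not on $q_\ell(x_\ell)$.

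\textbf{What is missing: Lemmas~\ref{lemma14} and~\ref{lemma15}, which the paper's proof invokes.}

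\emph{Step 1 (Lemma~\ref{lemma15}).} Since $x_1\in\mathcal E$ gives $-\log q_1(x_1)\le c_6 d\log T$, Lemma~\ref{lemma15} yields $-\log q_\ell(x_\ell)\le 2c_6 d\log T$ for every $\ell<\tau(Y_T)$. Its engine is the one-sided one-step estimate in Lemma~\ref{lemma14} with $i=K-1$, where the rescaled variable is $\sqrt{\alpha_t}X_{t-1}$ and $\alpha_t^{d/2}\le 1$:
$$\log q_{t-1}(x_{t-1})-\log q_t(x_t)\le \frac{4c_1 d\log T}{T}+C_{10}\Bigl\{\frac{d^2\theta_t^2K\log^4T}{T^2}+\cdots\Bigr\},$$
where the score-error term is dominated by $\widetilde\xi_t$.

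\emph{Step 2 (summation).} Summing over at most $T$ steps costs $4c_1d\log T+o(d\log T)+O(c_3)$; this is why $c_6>5c_1$ is required. The per-step order $d\log T/T$, for the sampler's step rather than only the exact flow, is essential. A per-interval $O(d\log T)$ bound, or even the two-sided $O(d\log^2T/T)$ bound of \eqref{eq:lemma14_claim}, does not survive summation over $T$ steps.

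\emph{Step 3 (typicality).} With $\theta_t\le 2c_6$, the first assertion of Lemma~\ref{lemma14} gives $E_{2,t}$, and Lemma~\ref{lemma12} gives $E_{3,t}$. That assertion is proved by a node-by-node induction over $(n,i)$ which establishes closeness to $x^\star$ and segment typicality simultaneously, so it does not presuppose $E_t$. Only after this can Lemmas~\ref{lemma3} and~\ref{lemma4} be applied and the recursion telescoped.

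\textbf{A minor point.} $T\ge C_2d\log^4T$ and $K\le c\log T$ do not force $(M_0Kd\log^2T/T)^K\le T^{-\Omega(1)}$. For $K=2$, the term $d^2K\log^2T\,(M_0Kd\log^2T/T)^K$ can be of order $d^2/\log^2T$. You should instead assume without loss of generality that this term is below a small constant, since otherwise Theorem~\ref{main_theorem} is trivial.
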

\begin{proof}
The result follows from Lemmas~\ref{lemma14} and \ref{lemma15} by the same
argument as in the proof of Lemma~10 of \cite{li2025faster}. We therefore omit
the details for brevity.
\end{proof}

By definition, for every \(Y_T \in \mathcal I_1\), we have \(\tau(Y_T)=T+1\). Combining this with Lemma~\ref{lemma6}, we obtain
\begin{align*}
 \alpha_1 &=  \mathbb{E}_{Y_T \sim p_T}\!\Biggl[
\Biggl(
\Bigl(
1+\sum_{t=2}^T\widetilde{\xi}_t(x_t)
+d^2\log^2TK\Bigl(\frac{M_0Kd\log^2T}{T}\Bigr)^K
\Bigr)
\frac{q_T(Y_T)}{p_T(Y_T)}
-1 \Biggr)
\mathbf{1}\{Y_1\in \mathcal E,\;Y_T\in\mathcal I_1\}
\Biggr]
\nonumber\\
&= \int \Biggl[ \Biggl( \Bigl( 1+\sum_{t=2}^{T}\widetilde{\xi}_t(x_t) +d^2\log^2TK\Bigl(\frac{M_0Kd\log^2T}{T}\Bigr)^K \Bigr)
q_T(x_T) -p_T(x_T) \Biggr) \mathbf{1}\{x_1\in \mathcal  E,\;x_T\in\mathcal I_1\} \Biggr] \,\mathrm{d}x_T \nonumber\\
& \le \int |q_T(x_T)-p_T(x_T)|\,\mathrm{d}x_T+ \int \sum_{t=2}^{T}\widetilde{\xi}_t(x_t)\,q_T(x_T)\, \mathbf{1}\{x_1\in \mathcal E,\;x_T\in\mathcal I_1\} \,\mathrm{d}x_T + d^2\log^2 TK \left( \frac{M_0Kd\log^2 T}{T} \right)^K. 
\end{align*}
For the first term on the right-hand side, Lemma~\ref{lemma1} yields
\[
\int |q_T(x_T)-p_T(x_T)|\,\mathrm{d}x_T =2 \operatorname{TV}\left(q_T,p_T\right) \lesssim \frac1{T^{K_1}}.
\]
For the second term, observe that
\[
\begin{aligned}
& \int \widetilde{S}_T(x_T)\,q_T(x_T)\,\mathbf{1}\{x_1\in \mathcal E,\;x_T\in\mathcal I_1\}\,\mathrm{d}x_T \\
=& \sum_{t=2}^T
\mathbb{E}_{Y_T\sim p_T}\!\left[
\frac{\log T}{T}
\left(d\sqrt{\log K}\,\varepsilon_{\mathrm{Jacobi},t}(Y_t)
+d^{3/2}\log K\log T\,\varepsilon_{\mathrm{score},t}(Y_t)\right)
\frac{q_T(Y_T)}{p_T(Y_T)}
\mathbf{1}\{Y_1\in \mathcal E,\;Y_T\in\mathcal I_1\}
\right] \\
\stackrel{(\ref{eq:lemma6_second_here})}{\le}&
4\sum_{t=2}^T
\mathbb{E}_{Y_T\sim p_T}\!\left[
\frac{\log T}{T}
\left(d\sqrt{\log K}\,\varepsilon_{\mathrm{Jacobi},t}(Y_t)
+d^{3/2}\log K\log T\,\varepsilon_{\mathrm{score},t}(Y_t)\right)
\frac{q_t(Y_t)}{p_t(Y_t)}
\mathbf{1}\{Y_1\in \mathcal E,\;Y_T\in\mathcal I_1\}
\right] \\
\le&
4\sum_{t=2}^T
\mathbb{E}_{Y_t\sim p_t}\!\left[
\frac{\log T}{T}
\left(d\sqrt{\log K}\,\varepsilon_{\mathrm{Jacobi},t}(Y_t)
+d^{3/2}\log K\log T\,\varepsilon_{\mathrm{score},t}(Y_t)\right)
\frac{q_t(Y_t)}{p_t(Y_t)}
\right] \\
=&
4\sum_{t=2}^T
\mathbb{E}_{Y_t\sim q_t}\!\left[
\frac{\log T}{T}
\left(d\sqrt{\log K}\,\varepsilon_{\mathrm{Jacobi},t}(Y_t)
+d^{3/2}\log K\log T\,\varepsilon_{\mathrm{score},t}(Y_t)\right)
\right]\\
\lesssim & \quad d\log^{5/2}T\sqrt{\log K}\,\varepsilon_{\mathrm{Jacobi}}
+d^{3/2}\log^{7/2}T\log K\varepsilon_{\mathrm{score}},
\end{aligned}
\]
where the last inequality follows from Assumptions~\ref{assup:2} and~\ref{assup:3}, together with the bounds
\begin{align*}
\frac1T\sum_t \mathbb{E}_{Y_t\sim q_t} \bigl[\varepsilon_{\mathrm{score},t}(Y_t)\bigr]
&\le \sqrt{ \frac1T\sum_t \mathbb{E}_{Y_t\sim q_t} \bigl[\varepsilon_{\mathrm{score},t}^2(Y_t)\bigr] } \lesssim
\sqrt{(N+1)K}\,\varepsilon_{\mathrm{score}} \asymp \varepsilon_{\mathrm{score}}\log^{3/2} T, \\
\frac1T\sum_t \mathbb{E}_{Y_t\sim q_t} \bigl[\varepsilon_{\mathrm{Jacobi},t}(Y_t)\bigr]
&\le \sqrt{ \frac1T\sum_t \mathbb{E}_{Y_t\sim q_t} \bigl[\varepsilon_{\mathrm{Jacobi},t}^2(Y_t)\bigr] }
\lesssim \sqrt{(N+1)K}\,\varepsilon_{\mathrm{Jacobi}} \asymp \varepsilon_{\mathrm{Jacobi}}\log^{3/2} T.
\end{align*}
Combining  the above estimates,  we arrive at
\[
\alpha_1 \lesssim
\frac1{T^{K_1}}+d^2\log^2 TK\left(\frac{M_0Kd\log^2 T}{T}\right)^K
+d\log^{5/2}T\sqrt{\log K}\,\varepsilon_{\mathrm{Jacobi}}
+d^{3/2}\log^{7/2}T\log K\varepsilon_{\mathrm{score}}.
\]

Using techniques similar to those in \cite{li2024sharp}, the term \(\alpha_2\) can also be controlled effectively, as stated in the following lemma.

\begin{lemma} \label{lemma7}
Suppose that Assumptions~\ref{assup:1}, \ref{assup:2}, and \ref{assup:3} hold. If
\(K \lesssim \log T\) and \(T \ge C_2 d\log^4 T\), where \(M_0, K_1>0\) are sufficiently large constants and \(c_2\) is the constant
appearing in Lemma~\ref{lemma5}, then
\[
\alpha_2 \lesssim d^2\log^2 TK\left(\frac{M_0Kd\log^2 T}{T}\right)^K +d\log^{5/2}T\sqrt{\log K}\,\varepsilon_{\mathrm{Jacobi}} +d^{3/2}\log^{7/2}T\log K\varepsilon_{\mathrm{score}} + e^{-c_2 d\log T} + \frac1{T^{K_1}}.
\]
\end{lemma}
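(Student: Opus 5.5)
We bound \(\alpha_2\) by following the stopping-time argument of \cite{li2024sharp} (as adapted in Lemma~11 of \cite{li2025faster}). For \(Y_T\in\mathcal I_2\) we have \(\widetilde S_T(Y_T)>c_3\), so \(\tau(Y_T)\le T\). Write \(\tau:=\tau(Y_T)\). By definition \(\widetilde S_{\tau-1}(Y_T)\le c_3\) while \(\widetilde S_\tau(Y_T)>c_3\). Up to step \(\tau-1\) the accumulated error is small, so the argument of Lemma~\ref{lemma6} applies on the truncated trajectory. First we reduce to the case where \(Y_T\) is typical along the trajectory: the events \(E_{2,t},E_{3,t}\) fail only if \(x_1\notin\mathcal E\) or on a set of \(q\)-mass at most \(e^{-c_2 d\log T}\). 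This is the same log-density tail argument as in Lemma~\ref{lemma5}, which uses only Assumption~\ref{assup:1}; Gaussian concentration of \(\bar X_\tau\) controls \(-\log p_{\bar X_\tau}\) along the interpolated points. With this, \eqref{eq:lemma6_second_here} gives
\[
\frac{q_1(x_1)}{p_1(x_1)}\le 2\,\frac{q_{\tau-1}(x_{\tau-1})}{p_{\tau-1}(x_{\tau-1})}.
\]

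Next we convert the indicator into an error sum. Since \(\widetilde S_\tau>c_3\), we have \(\mathbf 1\{Y_T\in\mathcal I_2\}\le c_3^{-1}\widetilde S_{\tau}(Y_T)\). More usefully, \(\widetilde S_{\tau-1}\le c_3\) forces \(\widetilde\xi_\tau(x_\tau)>c_3-\widetilde S_{\tau-1}\). We then split into the case \(\widetilde S_{\tau-1}\le c_3/2\), which gives \(\widetilde\xi_\tau(x_\tau)\ge c_3/2\), and the complementary case, which is handled by summing \(\widetilde\xi_t\) for \(t<\tau\). This yields
\[
\alpha_2\le 2\,\mathbb E_{Y_T\sim p_T}\Big[\frac{q_{\tau-1}(x_{\tau-1})}{p_{\tau-1}(x_{\tau-1})}\cdot\frac{2}{c_3}\sum_{t\le\tau}\widetilde\xi_t(x_t)\,\mathbf 1\{t\le\tau\}\Big]+\text{(atypical terms)}.
\]
For each \(t\le\tau\) we relate \(q_{\tau-1}/p_{\tau-1}\) back to \(q_t/p_t\). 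For \(t\le\tau-1\) this uses \eqref{eq:lemma6_second_here} (ratio within a factor 2). For \(t=\tau\) it uses one further step of \eqref{eq:gs_density_ratio_final_compact}, whose exponent is \(O(\widetilde\xi_\tau+\text{disc.})\).

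The single-step exponent at \(t=\tau\) is not a priori small, which is the main obstacle. To handle it we cap \(\widetilde\xi_\tau\) using the truncation in the event \(E_{1,t}\). Alternatively, as in \cite{li2024sharp}, we use the crude bound that the one-step density ratio is at most \(\exp(O(d\log T))\), and absorb it into \(\mathcal E\)'s lower bound \(q_1>e^{-c_6 d\log T}\) with \(c_6\) large. Once this is done, the change of measure \(\mathbb E_{p_t}[(\cdot)q_t/p_t]=\mathbb E_{q_t}[\cdot]\) applies exactly as in the bound on \(\alpha_1\). Combined with Cauchy--Schwarz and Assumptions~\ref{assup:2}--\ref{assup:3}, the contribution is
\[
\lesssim d\log^{5/2}T\sqrt{\log K}\,\varepsilon_{\mathrm{Jacobi}}+d^{3/2}\log^{7/2}T\log K\,\varepsilon_{\mathrm{score}}.
\]

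The discretization term \(d^2\log^2T\,K(M_0Kd\log^2T/T)^K\) enters from the factor in \eqref{eq:lemma6_first_here} summed over at most \(T\) steps. The remaining pieces are the mismatch \(q_T\) versus \(p_T\), which contributes \(T^{-K_1}\) by Lemma~\ref{lemma1}, and the atypical set, which contributes \(e^{-c_2 d\log T}\). Summing all of these gives the claimed bound on \(\alpha_2\).
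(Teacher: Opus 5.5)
Your overall route is the one the paper defers to: the stopping-time argument of Lemma~11 in \cite{li2024sharp}, with bounded support replaced by Assumption~\ref{assup:1} and Markov's inequality. But your sketch has a genuine hole exactly where you flag it, at the stopping step $t=\tau(Y_T)$, and neither of your patches closes it. On $\mathcal I_2$ you only know $\widetilde S_{t-1}\le c_3<\widetilde S_{t}$ for $t=\tau(Y_T)$. In your sub-case $\widetilde S_{t-1}<c_3/2$, the indicator has to be paid for by $\widetilde\xi_t(x_t)$, which is a function of the \emph{input} $x_t$ of step $t$. Lemma~\ref{lemma6}, however, only lets you pass from $q_1/p_1$ to $q_\ell/p_\ell$ for $\ell\le t-1$. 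To apply $\mathbb E_{p_t}[(\cdot)\,q_t/p_t]=\mathbb E_{q_t}[\cdot]$ you therefore need $q_{t-1}(x_{t-1})/p_{t-1}(x_{t-1})\lesssim q_t(x_t)/p_t(x_t)$, i.e.\ \eqref{eq:gs_density_ratio_final_compact} at step $t$. That requires $Y_T\in E_t$: both $\widetilde\xi_t(x_t)\le c_3$ and typicality of the step-$t$ iterates. Neither is available there. $\widetilde\xi_t$ may exceed $c_3$, and Lemma~\ref{lemma15} gives $-\log q_\ell(x_\ell)\le 2c_6 d\log T$ only for $\ell<\tau(Y_T)$. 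Relatedly, your claim that $E_{2,t},E_{3,t}$ fail only on a set of $q$-mass $e^{-c_2d\log T}$, by Gaussian concentration of $\bar X_\tau$, is unjustified. These events concern the algorithm's iterates, whose law is not $q_t$. The paper obtains typicality deterministically from $x_1\in\mathcal E$, and only below the stopping step.

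Your first patch, capping via $E_{1,t}$, is circular. At $t=\tau(Y_T)$, $E_{1,t}$ is precisely the event $\{\widetilde\xi_t(x_t)\le c_3\}$ that may fail. Replacing the multiplier by $\min\{\widetilde\xi_t,c_3\}$ makes it bounded, but still does not let you exchange the ratio at $x_{t-1}$ for the ratio at $x_t$.

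Your second patch, the ``crude $\exp(O(d\log T))$'' one-step bound, is not available either. By \eqref{eq:step1_density_decomp_gs} and Lemma~\ref{lemma2}, the one-step ratio is governed by $\|x_{t-1}-x^\star_{t-1}\|_2$ and by the determinant of the learned Jacobian $J^{(N)}_{\tau_{t,K-1}}$. Both are uncontrolled when the score or Jacobian errors at that step are large, since Assumptions~\ref{assup:2}--\ref{assup:3} are only averaged $L^2$ bounds under $q_t$. Even granting such a bound, a multiplicative $e^{O(d\log T)}$ in front of polynomially small error terms destroys the estimate. The threshold $e^{-c_6d\log T}$ in $\mathcal E$ only discards exponentially small $q_1$-mass and cannot absorb a multiplicative factor. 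Meanwhile $\{\widetilde\xi_t>c_3\}$ has merely polynomially small probability.

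What is missing is an argument that bounds the bad step's contribution through the law of its input $x_t$, where $\widetilde\xi_t$ is evaluated, without any pointwise density-ratio bound across that step. Any proof along these lines must supply this, and your proposal leaves it open.
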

\begin{proof}
The proof follows arguments similar to those of Lemma~11 in \cite{li2024sharp}, with minor modifications. Specifically, the proof in \cite{li2024sharp} relies on a bounded-support assumption on \(X_0\). However, a careful inspection shows that this condition can be replaced by Assumption~\ref{assup:1} combined with Markov's inequality. This yields the similar result up to an additional negligible error \(T^{-K_1} + \exp(-c_2 d \log T)\). We therefore omit the proof for brevity.
\end{proof}

\paragraph{Step 4: controlling $\operatorname{TV}(q_1,p_1)$.}
Substituting the bounds for \(\alpha_1\) and \(\alpha_2\) into
\eqref{eq:alp12}, we obtain the desired result
\begin{eqnarray*}
   &&  \operatorname{TV}(q_1,p_1)  \\
    &\lesssim &  d^2\log^2 TK\left(\frac{M_0Kd\log^2 T}{T}\right)^K
+d\log^{5/2}T\sqrt{\log K}\,\varepsilon_{\mathrm{Jacobi}}
+d^{3/2}\log^{7/2}T\log K\varepsilon_{\mathrm{score}}+e^{-c_2 d \log T}+\frac1{T^{K_1}} \\
   &  \asymp &d^2\log^2 TK\left(\frac{M_0Kd\log^2 T}{T}\right)^K
+d\log^{5/2}T\sqrt{\log K}\,\varepsilon_{\mathrm{Jacobi}}
+d^{3/2}\log^{7/2}T\log K\varepsilon_{\mathrm{score}}+\frac1{T^{K_1}}.
\end{eqnarray*}

\section{Numerical experiments}
\label{sec:numerical_study}

In this section, we numerically compare our method with the Li-HEROISM
baseline~\cite{li2025faster}. Following Huang et al.~\cite{huang2024pflow}, we
consider a finite Gaussian mixture target distribution of the form
\begin{equation}
p_{\mathrm{data}}(x)
=
\sum_{\ell=1}^{M}
w_\ell \mathcal N(x;m_\ell,C_\ell),
\qquad
\sum_{\ell=1}^{M}w_\ell=1,
\label{eq:num_target_gmm}
\end{equation}
where \(M\) denotes the number of mixture components, \(x \in \mathbb{R}^d\),
\(m_{\ell} \in \mathbb{R}^d\), and \(C_\ell \in \mathbb{R}^{d \times d}\). Under
the forward process \eqref{def_1}, the marginal density of \(\bar X_\tau\)
remains a Gaussian mixture, as observed in \cite{huang2024pflow}:
\begin{equation}
p_{\bar X_\tau}(x)
=
\sum_{\ell=1}^{M}
w_\ell
\mathcal N
\left(
x;
\sqrt{1-\tau}\,m_\ell,\,
(1-\tau)C_\ell+\tau I_d
\right).
\label{eq:num_forward_density}
\end{equation}
Consequently, for each \(x \in \mathbb{R}^d\), the exact score function used in
the experiments is available in closed form:
\begin{equation}
s_\tau^\star(x)
=
-\sum_{\ell=1}^{M}
\frac{
w_\ell
\mathcal N
\left(
x;
\sqrt{1-\tau}\,m_\ell,\,
(1-\tau)C_\ell+\tau I_d
\right)
}{
p_{\bar X_\tau}(x)
}
\left((1-\tau)C_\ell+\tau I_d\right)^{-1}
\left(x-\sqrt{1-\tau}\,m_\ell\right).
\label{eq:num_exact_score}
\end{equation}

In practice, the score function is typically trained  by a neural network via score matching on progressively corrupted data. In the present
numerical study, however, we bypass the training stage and instead evaluate the
samplers using an imperfect analytical score parameterized by a scalar
\(\delta\):
\begin{equation}
s_\tau(x)
=
s_\tau^\star(x)
+
\delta \eta(x).
\label{eq:num_imperfect_score}
\end{equation}
Following the artificial-error protocol of Huang et al.~\cite{huang2024pflow},
we consider the perturbations
\[
\eta_{\mathrm{const}}(x)=\frac{1}{\sqrt d}\mathbf{1},
\qquad
\eta_{\mathrm{lin}}(x)=\frac{x-m_0}{\sqrt d},
\qquad
\eta_{\mathrm{sin}}(x)=\frac{\sin(x)\odot(x-m_0)}{\sqrt d}\in\mathbb{R}^d,
\]
where \(m_0=\mathbb E[X_0]\), the sine function is applied coordinatewise,
\(\odot\) denotes coordinatewise multiplication, and
\(\mathbf{1}\in\mathbb{R}^d\) is the all-ones vector.

The reverse dynamics are governed by the probability flow ODE \eqref{ODE}. We
use the same two-phase schedule \eqref{eq:beta1}--\eqref{eq:learnrate} as in
the theoretical analysis, with \(c_0=1\) and \(c_1=0.5\), and linearly rescale
the raw endpoints \(\tau_t=1-\bar\alpha_t\) to the fixed numerical interval
\([\tau_{\min},\tau_{\max}]=[10^{-3},0.999]\). Thus, \(\tau_{\max}=0.999\) is
used as a numerical approximation to the fully noised endpoint \(\tau=1\),
whereas \(\tau_{\min}=10^{-3}\) serves as a numerical approximation to the data
endpoint \(\tau=0\). To initialize the reverse sampler, we draw \(J\) particles
from \(Y_T\sim \mathcal N(0,I_d)\) at \(\tau_{\max}\) and evolve the
deterministic reverse process down to \(\tau_{\min}\).

With \(T\) outer iterations, \(K\) interpolation nodes, \(N\) refinement
rounds, and cached score evaluations, the total numbers of score function
evaluations are
\begin{equation}
\mathrm{SFE}_{\mathrm{Li}}
=
T\bigl(1+(K-1)N\bigr),
\qquad
\mathrm{SFE}_{\mathrm{Ours}}
=
T\bigl(K+(K-1)N\bigr).
\label{eq:num_sfe_count}
\end{equation}
Here, \(\mathrm{SFE}\) denotes the total number of score function evaluations
over all outer iterations. For each fixed time point \(\tau\), one batched
evaluation of \(s_\tau(\cdot)\) over all \(J\) particles is counted as a single
score function evaluation. The goal of the experiments is to compare the
finite-budget performance of the two methods under the same target
distribution, score perturbation, and total score-evaluation budget.

\begin{figure}[H]
\centering
\includegraphics[width=0.33\linewidth]{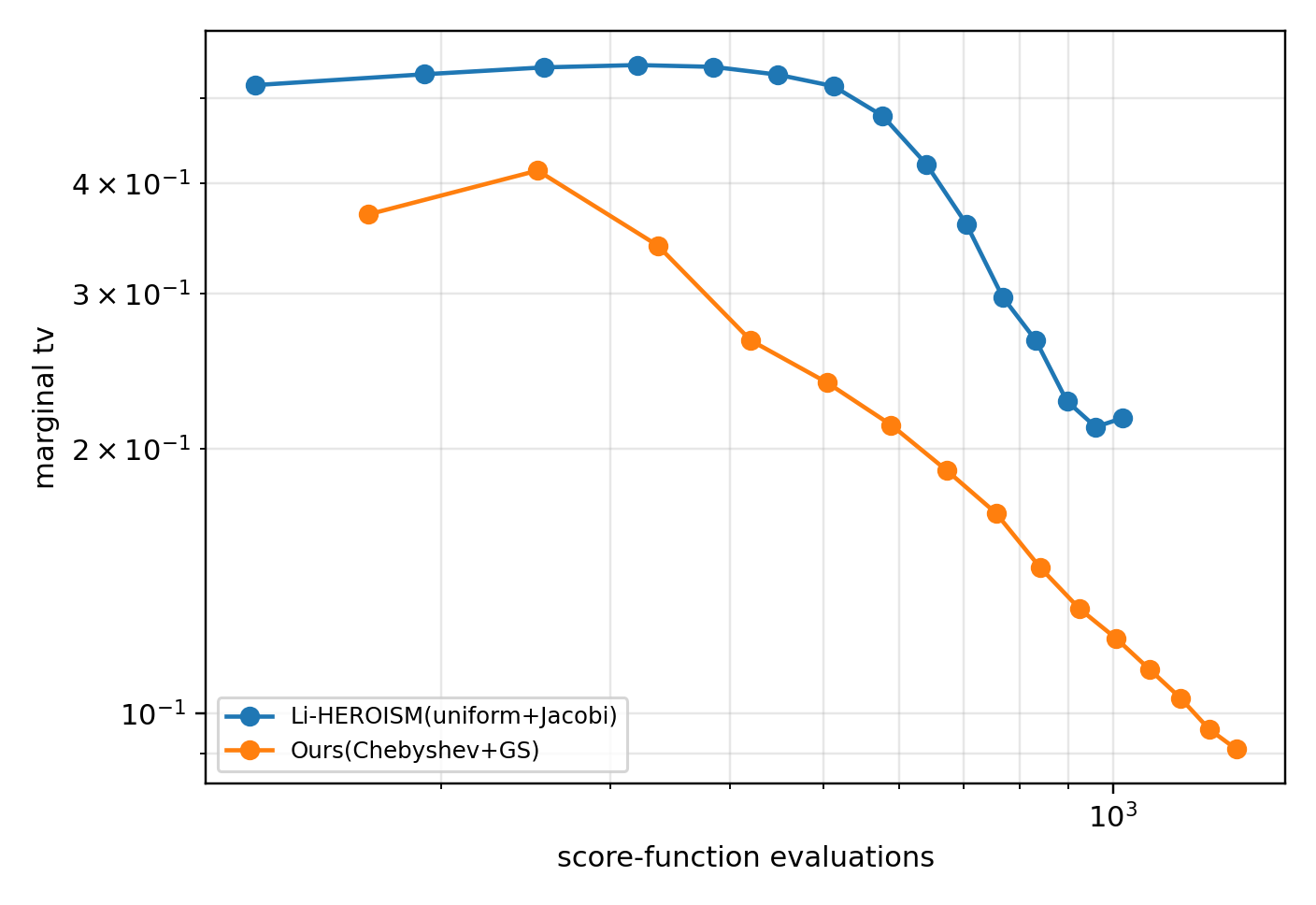}
\includegraphics[width=0.33\linewidth]{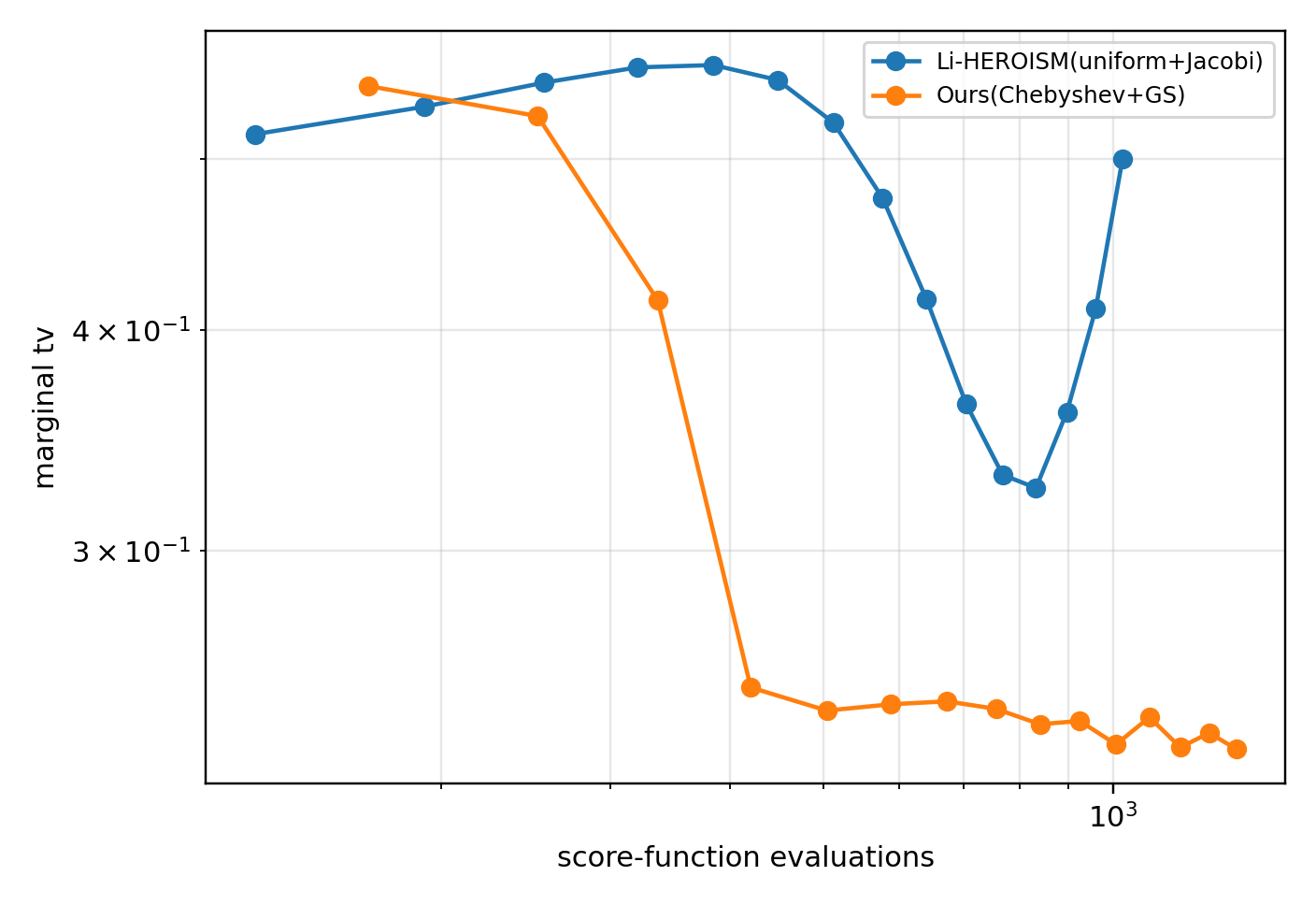}
\includegraphics[width=0.33\linewidth]{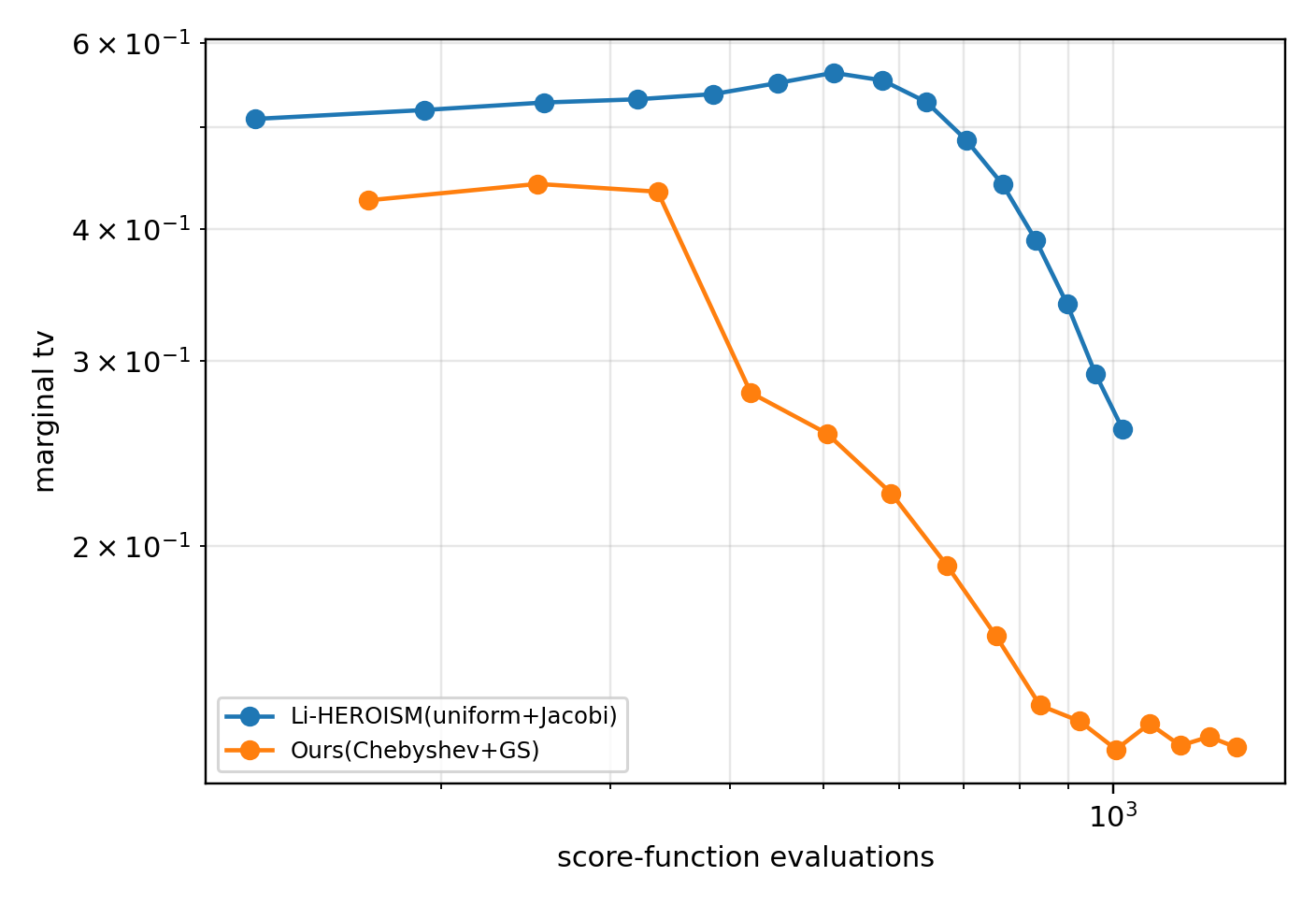}
\caption{One-dimensional total variation (TV) comparison with \(\delta=0.05\).
From left to right: $\eta_{\mathrm{const}}$, $\eta_{\mathrm{lin}}$, and $\eta_{\mathrm{sin}}$ score perturbations. Each
panel plots TV distance versus total $\mathrm{SFE}$ for Li-HEROISM
and the proposed GS sampler.}
\label{fig:num_1d_sfe}
\end{figure}
\subsection{Test $d=1$}
\label{subsec:num_1d}

We first consider the one-dimensional three-component Gaussian mixture
\eqref{eq:num_target_gmm} with
\[
w=[0.1 ; 0.4 ; 0.5],\qquad
m=[-6.0 ; 4.0 ; 6.0],\qquad
C=[0.25 ; 0.25 ; 0.25].
\]
This target follows the one-dimensional setting of Huang et
al.~\cite{huang2024pflow} and is used to assess whether the samplers can
accurately recover the locations and relative weights of well-separated modes.
We sample \(J=5\times 10^4\) particles from \(\mathcal{N}(0,1)\), fix \(K=6\)
and \(N=3\), and vary the number of outer iterations \(T\). Under the
\(\mathrm{SFE}\) convention in \eqref{eq:num_sfe_count}, this yields
\(\mathrm{SFE}_{\mathrm{Li}}=16T\) and
\(\mathrm{SFE}_{\mathrm{Ours}}=21T\).

For density visualization, we estimate the empirical density of the generated
samples at \(\tau_{\min}\), denoted by \(\widehat p_{\tau_{\min}}\), using a
kernel density estimator (KDE) with bandwidth selected according to Silverman's
rule~\cite{silverman2018density}. Since \(\tau_{\min}=10^{-3}\) approximates
the data endpoint, \(p_{\bar X_{\tau_{\min}}}\) and
\(\widehat p_{\tau_{\min}}\) serve as numerical proxies for \(q_1\) and
\(p_1\), respectively. Following the evaluation protocol of Huang et
al.~\cite{huang2024pflow}, we approximate the total variation distance on
\([-10,10]\) using a composite midpoint quadrature with \(1000\) subintervals.

Figure~\ref{fig:num_1d_sfe} shows that, under the same \(\mathrm{SFE}\)
budget, the proposed Gauss-Seidel sampler achieves a faster reduction in total variation
distance than Li-HEROISM across all three score-perturbation settings.
Figure~\ref{fig:num_1d_density} compares the estimated densities with the
reference density \(p_{\bar X_{\tau_{\min}}}\). Table~\ref{tab:num_1d_constant_twox_sfe}
further shows that our method often attains smaller total variation errors even
when Li-HEROISM is allowed twice as many \(\mathrm{SFEs}\), demonstrating the
efficiency of the proposed sampler.

\begin{figure}[H]
\centering
\includegraphics[width=0.48\linewidth]{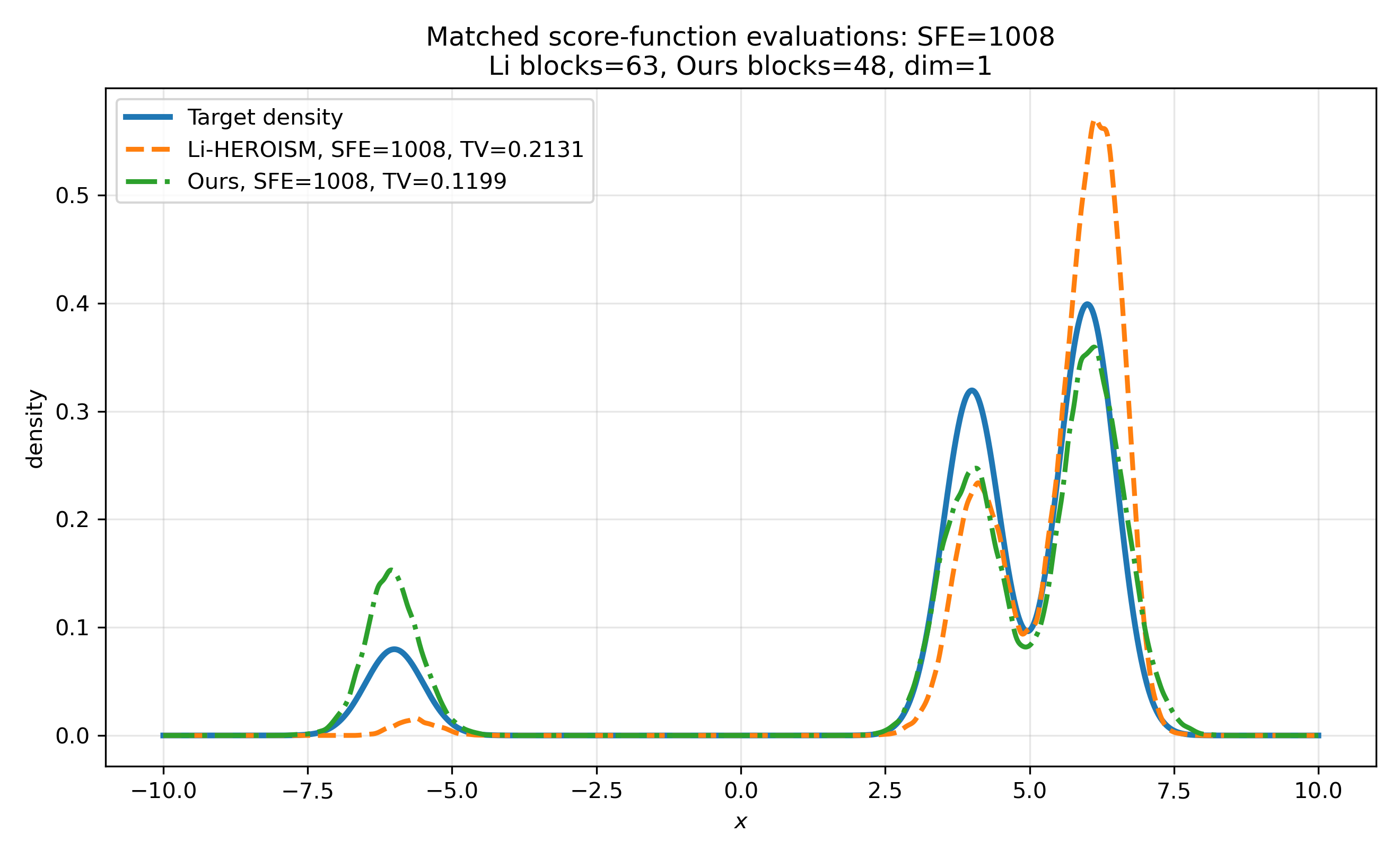}
\includegraphics[width=0.48\linewidth]{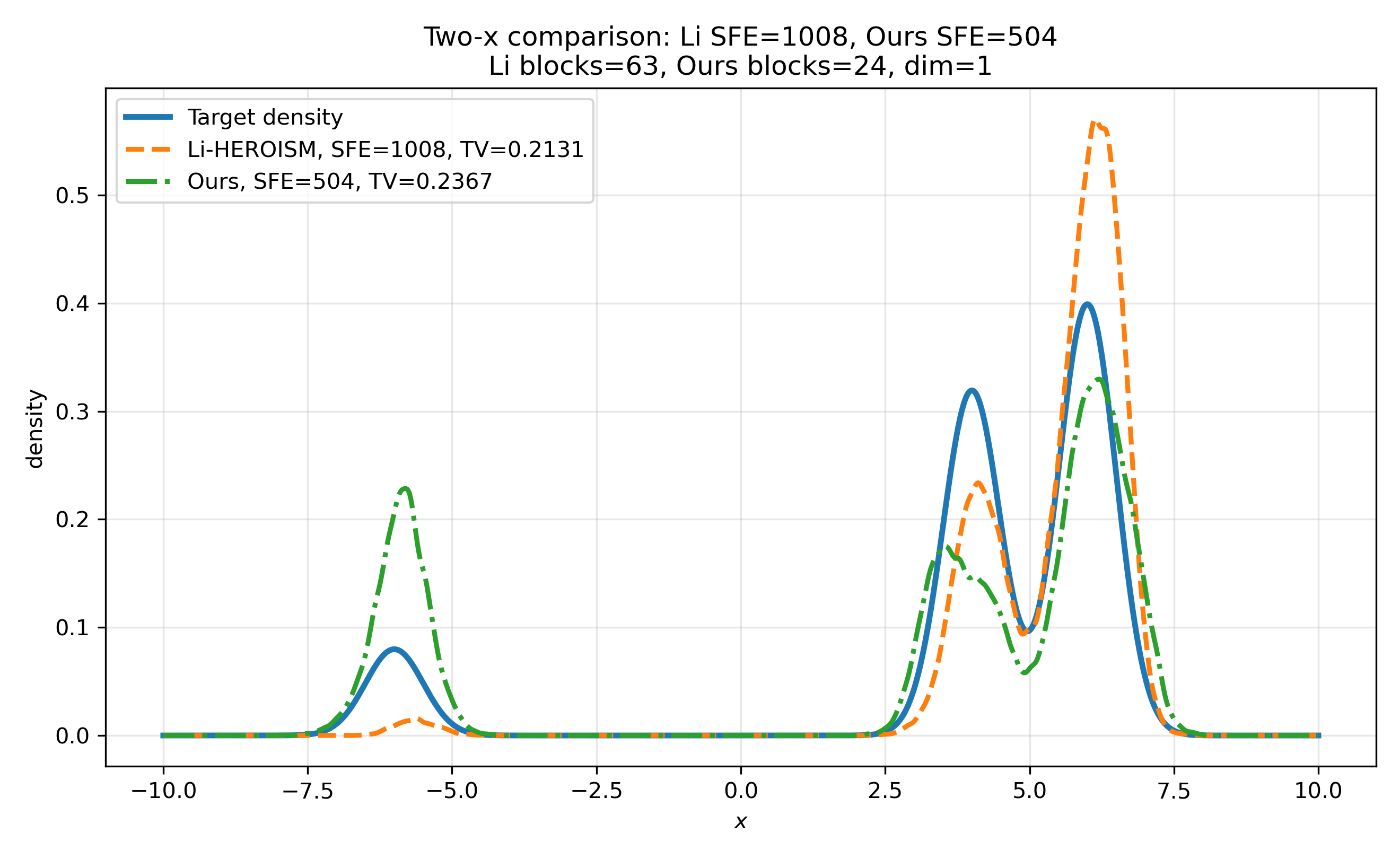}
\caption{One-dimensional density comparison under the \(\eta_{\mathrm{const}}\) score perturbation
 with \(\delta=0.05\).
Left: matched $\mathrm{SFE}$. Right: Li-HEROISM with twice the
$\mathrm{SFE}$ of the proposed Gauss-Seidel sampler.}
\label{fig:num_1d_density}
\end{figure}
\begin{table}[H]
\centering
\caption{One-dimensional experiment with $\eta_{\mathrm{const}}$ score perturbation, where
Li-HEROISM is allowed twice as many $\mathrm{SFEs}$ as the proposed
Gauss-Seidel method. The perturbation is
\(\eta(x)=\mathbf 1/\sqrt d\) with
\(\delta=0.05\). }
\label{tab:num_1d_constant_twox_sfe}
\small
\begin{tabular}{cccccc}
\toprule
\(T_{\mathrm{Ours}}\) &
$\mathrm{SFE}_{\mathrm{Ours}}$ &
\(T_{\mathrm{Li}}\) &
$\mathrm{SFE}_{\mathrm{Li}}$ &
$\mathrm{TV}_{\mathrm{Li}}$ &
$\mathrm{TV}_{\mathrm{Ours}}$ \\
\midrule
8  & \textbf{168} & 21 & \textbf{336}  & 0.5446 & \textbf{0.3689} \\
16 & \textbf{336} & 42 & \textbf{672}  & 0.3901 & \textbf{0.3401} \\
24 & \textbf{504} & 63 & \textbf{1008} & \textbf{0.2131} & 0.2367 \\
\bottomrule
\end{tabular}
\end{table}
\begin{figure}[H]
\centering
\includegraphics[width=0.33\linewidth]{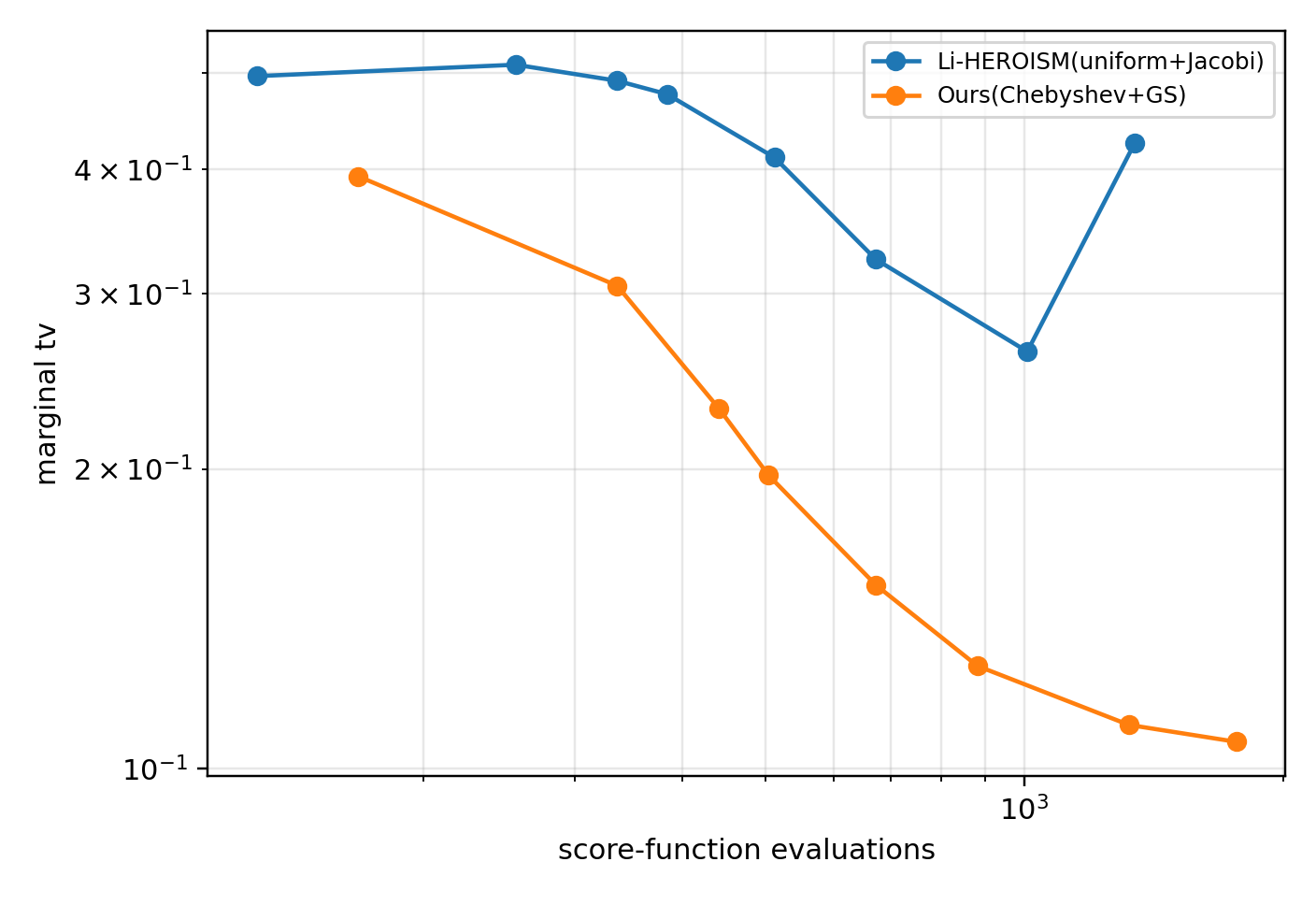}
\includegraphics[width=0.33\linewidth]{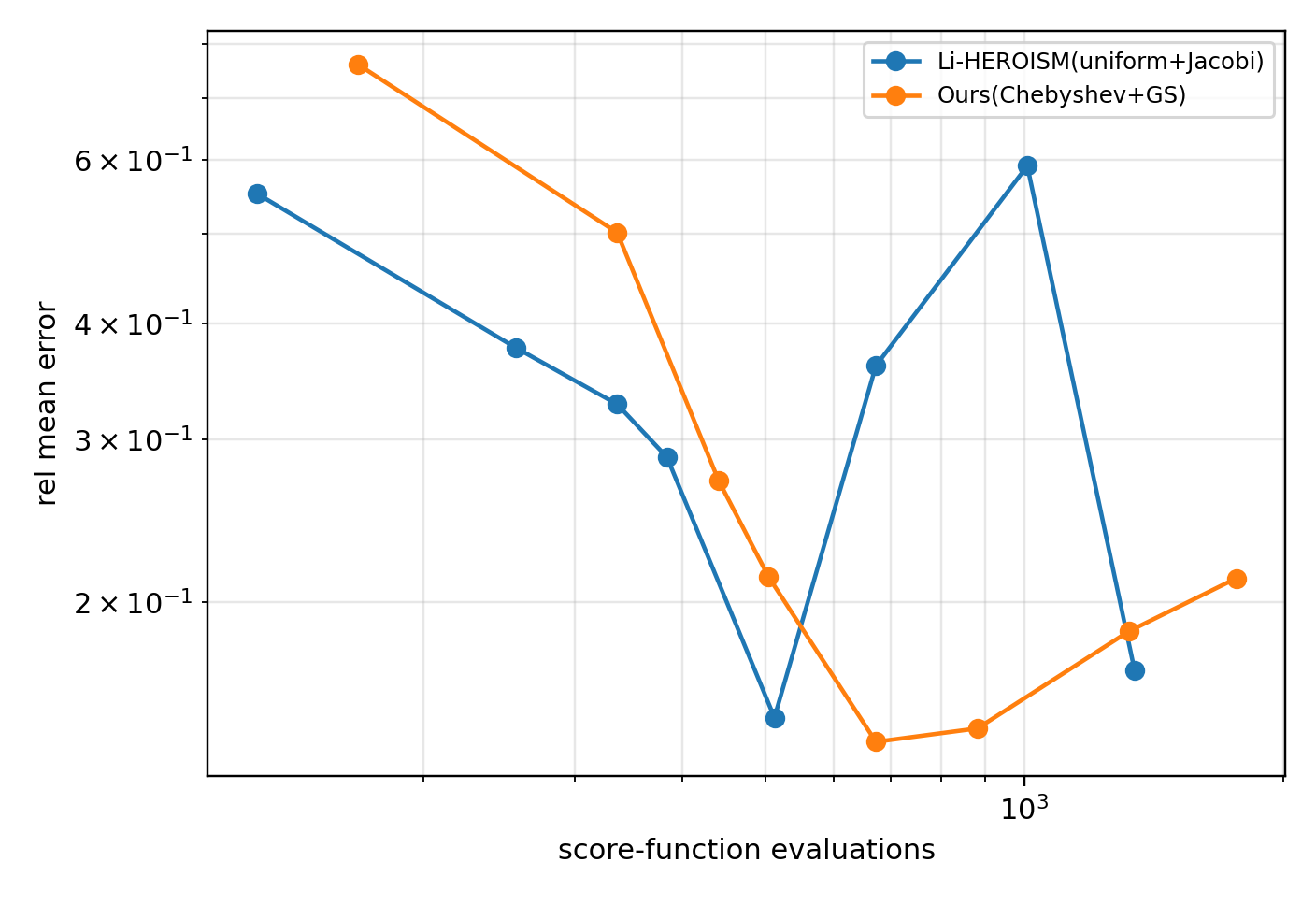}
\includegraphics[width=0.33\linewidth]{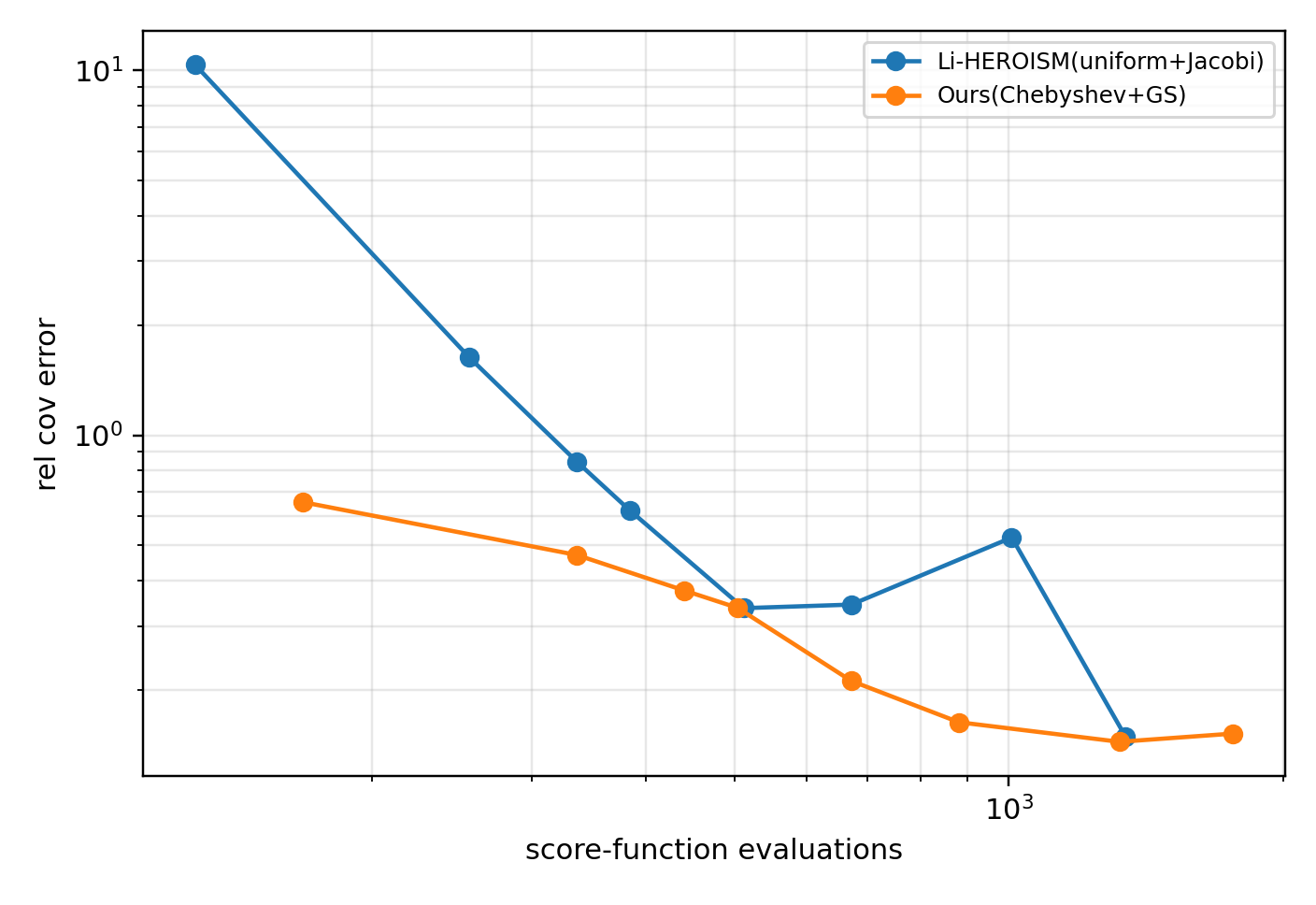}
\caption{High-dimensional experiment for \(d=128\) under the
\(\eta_{\mathrm{lin}}\) artificial score perturbation with \(\delta=0.05\). Error versus total SFE. Left: first-coordinate marginal total variation (TV). Middle: full-dimensional
relative mean error. Right: full-dimensional relative covariance error.}
\label{fig:num_d128_sfe}
\end{figure}
\subsection{Test $d=128$}
\label{subsec:num_highdim}

Following the high-dimensional Gaussian-mixture setting in Huang et
al.~\cite{huang2024pflow}, we next test a \(d=128\) anisotropic
Gaussian mixture \eqref{eq:num_target_gmm} with \(M=5\) components. The mixture weights $w_{\ell}$ are sampled from
\(\operatorname{Uniform}(0,1)\) and normalized. The distribution means and
covariance matrices are generated by
\[
m_\ell\sim \mathcal N(0,3^2I_d),
\qquad
C_\ell
=
\frac18
\left(
\frac{W_\ell^\top W_\ell}{d}
+
I_d
\right),
\qquad
(W_\ell)_{ij}\sim \mathcal N(0,1),
\qquad
W_\ell \in \mathbb{R}^{d \times d}
\]
with \(d=128\). 
We use the same algorithmic setup as in the one-dimensional test, except that
the number of particles is \(J=2\times 10^4\). Since full-dimensional total variation (TV) estimation from particles is unreliable in high
dimension, we follow Huang et al.~\cite{huang2024pflow} and compute the
first-coordinate marginal TV, estimated with the same KDE bandwidth and
fixed-grid quadrature rule as in the one-dimensional test.
To complement this one-dimensional marginal metric, we also compute the
full-dimensional relative mean error and relative covariance error:
\[
\mathrm{Err}_{\mathrm{mean}}
=
\frac{\|\widehat m-m_{\tau_{\min}}\|_2}{\|m_{\tau_{\min}}\|_2},
\qquad
\mathrm{Err}_{\mathrm{cov}}
=
\frac{\|\widehat\Sigma-\Sigma_{\tau_{\min}}\|_{\mathrm F}}
{\|\Sigma_{\tau_{\min}}\|_{\mathrm F}}.
\]
Here \(\widehat m\in\mathbb{R}^d\) and \(\widehat\Sigma\in\mathbb{R}^{d\times d}\) are the empirical mean and covariance
computed directly from the generated particles, while \(m_{\tau_{\min}}\) and
\(\Sigma_{\tau_{\min}}\) are the exact mean and covariance of
\(p_{\bar X_{\tau_{\min}}}\).
Figure~\ref{fig:num_d128_sfe} shows that the proposed sampler reduces the
marginal TV more steadily and substantially faster than Li-HEROISM, with more
stable covariance behavior. Although the relative mean error is mixed, it is
only an auxiliary moment diagnostic. Table~\ref{tab:num_d128_twox_sfe} further
shows that our method attains smaller marginal TV in all tested cases even when
Li-HEROISM uses twice as many $\mathrm{SFEs}$, demonstrating its
finite-budget efficiency.
\begin{table}[H]
\centering
\caption{High-dimensional experiment when Li-HEROISM uses twice as many
score function evaluations as the proposed Gauss--Seidel sampler. The score
perturbation is $\eta_{\mathrm{lin}}$ with \(\delta=0.05\).}
\label{tab:num_d128_twox_sfe}
\begin{tabular}{ccccccccc}
\toprule
\(T_{\mathrm{Ours}}\) & $\mathrm{SFE}_{\mathrm{Ours}}$ &
\(T_{\mathrm{Li}}\) & $\mathrm{SFE}_{\mathrm{Li}}$ &
$\mathrm{TV}_{1,\mathrm{Li}}$ & $\mathrm{TV}_{1,\mathrm{Ours}}$ &
$\mathrm{MeanErr}_{\mathrm{Li}}$ & $\mathrm{MeanErr}_{\mathrm{Ours}}$ &
$\mathrm{CovErr}_{\mathrm{Li}}$ / $\mathrm{CovErr}_{\mathrm{Ours}}$ \\
\midrule
8  & 168 & 21 & 336  & 0.4911 & \textbf{0.3936} & 0.3271 & 0.7598 & 0.8456 / \textbf{0.6556} \\
16 & 336 & 42 & 672  & 0.3248 & \textbf{0.3056} & 0.3600 & 0.5010 & 0.3434 / 0.4695 \\
24 & 504 & 63 & 1008 & 0.2624 & \textbf{0.1971} & 0.5911 & \textbf{0.2132} & 0.5244 / \textbf{0.3365} \\
32 & 672 & 84 & 1344 & 0.4251 & \textbf{0.1528} & 0.1688 & \textbf{0.1415} & 0.1491 / 0.2121 \\
\bottomrule
\end{tabular}
\end{table}

\section{Discussion and future directions}
\label{sec:discussion}

In this paper, we develop a Chebyshev--Gauss--Seidel higher-order sampler for
training-free acceleration of diffusion models through the probability flow ODE,
and establish a convergence guarantee that allows the approximation order \(K\)
to grow with the number of iterations \(T\). In the exact-score setting, by
taking \(K \asymp \log T\), the proposed sampler achieves a target total
variation accuracy \(\varepsilon\) using at most
\[
\widetilde O\!\left(d^{1+o_T(1)}\varepsilon^{-1/K_1}\right)
\]
score function evaluations, where \(o_T(1)\to 0\) as \(T\to\infty\) and
\(K_1>0\) is an absolute constant. Our guarantee holds under a polynomial
second-moment condition on the target distribution and accommodates inexact
score estimation, without imposing higher-order smoothness assumptions on the
score estimates.

Several questions remain open. First, it would be interesting to understand
whether the intrinsic low-dimensional structure of the target distribution can
be leveraged to further accelerate the proposed sampler. Second, although our
analysis avoids higher-order smoothness assumptions on the score estimates, it
still requires control of the Jacobian estimation error; an important direction
for future work is to determine whether this requirement can be weakened.
Finally, our numerical results indicate that interpolation nodes and
successive-refinement schemes play a significant role in the performance of
higher-order samplers. This naturally raises the question of whether a unified
non-asymptotic theory can rigorously explain the impact of these algorithmic
design choices.

%

\appendix
\section{Auxiliary Lemmas}
\label{app:auxiliary_estimates}

This section gathers the auxiliary estimates that will be used repeatedly in the theoretical analysis and in the subsequent density-ratio propagation arguments.
\begin{lemma}
\label{lemma8}
Assume that \(K \ge 2\), and let
\[
z_j = \cos\!\left(\frac{j\pi}{K-1}\right), \qquad 0 \le j \le K-1,
\]
denote the Chebyshev--Lobatto nodes on \([-1,1]\). Let \(\tau_{t,j}\) be
defined as in \eqref{eq:nodes}, and let \(\psi_{t,j}\) be given by
\eqref{eq:basicfun}. Then
\[
\sup_{\tau \in [\tau_{t,K-1}, \tau_{t,0}]}
\sum_{j=0}^{K-1} |\psi_{t,j}(\tau)|
\le \frac{2}{\pi}\log K + 1.
\]
\end{lemma}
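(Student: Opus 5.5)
The plan is to reduce to the standard interval and then bound the Lebesgue function of the Chebyshev--Lobatto (extrema) nodes through a trigonometric representation. Since \(\tau\mapsto z\) defined by \(\tau=\frac{\tau_{t,0}+\tau_{t,K-1}}{2}+\frac{\tau_{t,0}-\tau_{t,K-1}}{2}z\) is affine and maps \(\tau_{t,j}\) to \(z_j\), each \(\psi_{t,j}(\tau)\) equals the Lagrange basis polynomial \(\ell_j(z)\) for the nodes \(\{z_j\}\) on \([-1,1]\). Lagrange bases are invariant under affine reparametrization because the factors \((\tau-\tau_{t,j'})/(\tau_{t,j}-\tau_{t,j'})\) are ratios. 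It therefore suffices to show \(\Lambda(z):=\sum_j|\ell_j(z)|\le \frac{2}{\pi}\log K+1\) on \([-1,1]\). The case \(K=2\) is immediate: linear interpolation at \(\pm1\) has nonnegative basis functions summing to \(1\). So assume \(n:=K-1\ge 2\).

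Step 1 is an explicit formula for the basis. The node polynomial is proportional to \((1-z^2)U_{n-1}(z)\), and \((1-z^2)U_{n-1}(z)=\sin\theta\,\sin(n\theta)\) with \(z=\cos\theta\) and \(\theta_j=j\pi/n\). From this one gets
\[
\ell_j(\cos\theta)=\frac{(-1)^{j+1}\sin\theta\,\sin(n\theta)}{n\,c_j\,(\cos\theta-\cos\theta_j)},
\]
where \(c_0=c_n=2\) and \(c_j=1\) otherwise.

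Step 2 rewrites this on the circle. I would use the identity
\[
\frac{\sin\theta}{\cos\theta-\cos\theta_j}=-\frac12\Bigl[\cot\tfrac{\theta+\theta_j}{2}+\cot\tfrac{\theta-\theta_j}{2}\Bigr].
\]
Applying the triangle inequality, the endpoint weights \(1/c_j\) exactly account for the coincidence \(\theta_0=-\theta_0\) and \(\theta_n\equiv-\theta_n\pmod{2\pi}\). This gives
\[
\Lambda(\cos\theta)\le \frac{|\sin n\theta|}{2n}\sum_{k=0}^{2n-1}\Bigl|\cot\tfrac{\theta-k\pi/n}{2}\Bigr|,
\]
which is the Lebesgue function of trigonometric interpolation at \(2n\) equispaced points.

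Step 3 bounds this trigonometric sum. Write \(\theta=(k_0+s)\pi/n\) with \(s\in[0,1]\). For the two nearest nodes \(k_0\) and \(k_0+1\), I would show by a direct one-variable estimate that
\[
\frac{|\sin(s\pi)|}{2n}\Bigl(\cot\tfrac{s\pi}{2n}+\cot\tfrac{(1-s)\pi}{2n}\Bigr)\le 1.
\]
This uses \(\cot x\le 1/x\) and \(\sin(s\pi)\le \pi\min(s,1-s)\cdot 2\cdot\)(concavity); the sharper form needed is the known bound \(\frac{\sin \pi s}{\pi}\bigl(\frac1s+\frac1{1-s}\bigr)\le \frac{4}{\pi}\cdot\frac{\pi}{4}\cdots\), which should be arranged to give exactly \(\le1\). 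For the remaining \(2n-2\) nodes, I would pair them symmetrically around \(\theta\) and use \(|\sin n\theta|\le1\) together with \(|\cot x|\le 1/|x|\) on \((0,\pi/2]\). Comparing \(\frac{1}{n}\sum_{m=1}^{n-1}\cot\frac{m\pi}{2n}\) with \(\frac{2}{\pi}\int\) then gives the harmonic-type sum \(\frac{1}{\pi}\sum_{m}\frac{1}{m+\cdots}\le \frac{2}{\pi}\log n\). Since \(\log n<\log K\), the claim follows.

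The main obstacle is Step 3 with the sharp constants \(2/\pi\) and \(1\). Crude bounds easily yield \(C\log K+C'\), but the exact constant requires careful handling of how \(s\) enters the nearest and the far terms simultaneously. If the direct computation becomes messy, my fallback is to invoke the classical estimate of Ehlich--Zeller/Brutman for the Lebesgue constant of the Chebyshev extrema nodes, \(\Lambda_n\le \frac{2}{\pi}\log(n)+1\). Steps 1--2 already reduce the statement exactly to that result.
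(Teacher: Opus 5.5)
Your affine reduction to $[-1,1]$, followed by your fallback of citing the classical Lebesgue-constant bound for the Chebyshev extrema, is exactly the paper's proof. The paper makes the same change of variables and then cites Trefethen, \emph{Approximation Theory and Approximation Practice}, Thm.~15.2, which gives $\frac{2}{\pi}\log K+1$ for $K$ nodes. That route is complete on its own. Your Steps~1--2 are also correct: the formula for $\ell_j(\cos\theta)$ checks out, and the weights $1/c_j$ together with the triangle inequality give $\Lambda(\cos\theta)\le L(\theta):=\frac{|\sin n\theta|}{2n}\sum_{k=0}^{2n-1}|\cot\frac{\theta-k\pi/n}{2}|$.

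Your self-contained Step~3, however, does not work as designed.

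\begin{itemize}
\item \emph{The nearest-node claim is false.} At $s=1/2$ the two nearest terms equal $\frac{1}{n}\cot\frac{\pi}{4n}$. This is $\frac{1+\sqrt2}{2}\approx1.21$ for $n=2$ and increases to $4/\pi$. Your crude expression $\frac{\sin\pi s}{\pi}(\frac1s+\frac1{1-s})$ also equals $4/\pi$ at $s=1/2$, so no rearrangement of it yields $\le 1$.
\item \emph{The far claim also fails.} With $|\sin n\theta|\le1$ and $\cot x\le 1/x$, the far sum near a node ($s\to0$) is $\frac{1}{\pi}(H_{n-1}+H_n-1)\approx\frac{2}{\pi}\log n+0.05$, where $H_m=\sum_{i=1}^m 1/i$. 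This exceeds your claimed $\frac{2}{\pi}\log n$.
\item \emph{No separate bounding of near and far parts can reach the additive constant~$1$.} The near part is about $4/\pi$ at the midpoint. The far part, once $|\sin n\theta|$ is discarded, is about $\frac{2}{\pi}\log n-0.24$ at the nodes, even with the exact $\cot$. At $n=20$ (so $K=21$) these two values sum to about $2.942$, while $\frac{2}{\pi}\log 21+1\approx2.938$. So any bound of the form $\sup(\text{near})+\sup(\text{far})$ overshoots for large $K$.
\end{itemize}

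To make the argument self-contained, you must keep the factor $\sin\pi s$ attached to every term. Then show that $L$ is maximized at the midpoints $s=1/2$. There $L=\frac1n\sum_{m=1}^{n}\cot\frac{(2m-1)\pi}{4n}$, which is the Lebesgue constant of the $n$ Chebyshev roots. Finally apply Rivlin's bound $<\frac{2}{\pi}\log n+1$. That is precisely the nontrivial content of the Ehlich--Zeller/Rivlin results you cite, so in practice the citation is the proof, as in the paper.
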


\begin{proof}
The proof is given in Section~\ref{lemma8_proof}.
\end{proof}

\begin{lemma}  \label{lemma9}
For large enough $T$, one has
\begin{subequations}
\begin{equation}
\alpha_t \ge 1-\frac{c_1\log T}{T}\ge \frac12,
\qquad 1\le t\le T.
\label{eq:23a}
\end{equation}
\begin{equation}
\frac12\frac{1-\alpha_t}{1-\bar{\alpha}_t}
\le
\frac12\frac{1-\alpha_t}{\alpha_t-\bar{\alpha}_t}
\le
\frac{1-\alpha_t}{1-\bar{\alpha}_{t-1}}
\le
\frac{4c_1\log T}{T},
\qquad 2\le t\le T.
\label{eq:23b}
\end{equation}
\begin{equation}
1
\le
\frac{1-\bar{\alpha}_t}{1-\bar{\alpha}_{t-1}}
\le
1+\frac{4c_1\log T}{T},
\qquad 2\le t\le T.
\label{eq:23c}
\end{equation}
\begin{equation}
\bar{\alpha}_T \le \frac{1}{T^{C_1}}.
\label{eq:23d}
\end{equation}
\begin{equation}
\frac{\bar{\alpha}_{t+1}}{1-\bar{\alpha}_{t+1}}
\le
\frac{\bar{\alpha}_t}{1-\bar{\alpha}_t}
\le
\frac{4\bar{\alpha}_{t+1}}{1-\bar{\alpha}_{t+1}},
\qquad 1\le t<T.
\label{eq:23e}
\end{equation}
\begin{equation}
\left|
\frac{\tau_{t,i_1}-\tau_{t,i_2}}{\tau_{t,i_3}(1-\tau_{t,i_4})}
\right|
\le
8c_1\frac{\log T}{T},
\qquad
2\le t\le T,\quad
0\le i_1,i_2,i_3,i_4\le K-1.
\label{eq:23f}
\end{equation}
\begin{equation}
|\gamma_{t,j}(\tau_{t,i})|
\le
2(\tau_{t,0}-\tau_{t,i}),
\qquad 0\le i,j\le K-1.
\label{eq:23g}
\end{equation}
\begin{equation}
1-\tau_{t,i}\asymp 1-\tau_{t,j},
\qquad
\tau_{t,i}\asymp \tau_{t,j},
\qquad
0\le i,j\le K-1,\quad 2\le t\le T.
\label{eq:23h}
\end{equation}
\begin{equation}
\sum_{j=0}^{K-1}| \gamma_{t,j}(\tau_{t,i})|
\le
C_4(\tau_{t,0}-\tau_{t,i})\log K,
\qquad 0\le i\le K-1.
\label{eq:23i}
\end{equation}
\begin{equation}
\sum_{j=0}^{K-1}|A_{ji}^{(t)}|
\le
C_4
\frac{(\tau_{t,0}-\tau_{t,i})\log K}{(1-\tau_{t,0})^{3/2}},
\qquad
0\le i\le K-1,\quad 2\le t\le T.
\label{eq:23j}
\end{equation}
\begin{equation}
\sum_{j=0}^{K-1}|A_{ji}^{(t)}|^2
\le
C_5
\frac{(\tau_{t,0}-\tau_{t,i})^2\log K}{(1-\tau_{t,0})^{3}},
\qquad
0\le i\le K-1,\quad 2\le t\le T.
\label{eq:23k}
\end{equation}
\end{subequations}
Here, $\gamma_{t,j}(\tau_{t,i})$ is given in \eqref{eq:gammtk},  $c_1$ is defined in \eqref{eq:learnrate}, $C_1>0$ is a sufficiently large constant, $C_4, C_5>0$ are universal constants, and $A_{j i}^{(t)}:=\frac{1}{2} \gamma_{t, j}\left(\tau_{t, i}\right)\left(1-\tau_{t, j}\right)^{-3 / 2}$ for convenience. 
\end{lemma}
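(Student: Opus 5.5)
The plan is to derive the items of Lemma~\ref{lemma9} in order, importing the schedule estimates (a)--(e) and deriving the node-specific estimates (f)--(k) from them together with Lemma~\ref{lemma8}. Items \eqref{eq:23a}--\eqref{eq:23e} concern only the schedule \eqref{eq:beta1}--\eqref{eq:learnrate}, which is identical to that of \cite{li2024sharp,li2025faster}. I would therefore take these verbatim from the corresponding lemma there (Lemma~8 of \cite{li2025faster}), recalling only briefly why they hold:
\begin{itemize}
\item[(a)] follows from $\beta_t\le c_1\log T/T$;
\item[(b)--(c)] follow by comparing $1-\alpha_t$ with $1-\bar\alpha_{t-1}$ via the geometric growth of $\beta_t$ in the first phase;
\item[(d)] holds because the second phase has $\beta_t=c_1\log T/T$ for order $T$ steps;
\item[(e)] follows from $\alpha_t\ge 1/2$.
\end{itemize}

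For the node estimates I use that all Chebyshev--Lobatto nodes lie in $[\tau_{t,K-1},\tau_{t,0}]=[1-\bar\alpha_{t-1},1-\bar\alpha_t]$, an interval of length $\bar\alpha_{t-1}-\bar\alpha_t=\bar\alpha_{t-1}(1-\alpha_t)$.

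For \eqref{eq:23h}, note that every $1-\tau_{t,i}$ lies between $\bar\alpha_t$ and $\bar\alpha_{t-1}=\bar\alpha_t/\alpha_t\le 2\bar\alpha_t$ by \eqref{eq:23a}. Likewise every $\tau_{t,i}$ lies between $1-\bar\alpha_{t-1}$ and $1-\bar\alpha_t$, whose ratio is at most $2$ by \eqref{eq:23c}.

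For \eqref{eq:23f}, bound the numerator by $\bar\alpha_{t-1}(1-\alpha_t)$ and the denominator from below by $(1-\bar\alpha_{t-1})\bar\alpha_t$. The ratio is then $(1-\alpha_t)/\bigl(\alpha_t(1-\bar\alpha_{t-1})\bigr)\le 2\cdot 4c_1\log T/T$ by \eqref{eq:23a}--\eqref{eq:23b}.

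For the quadrature weights, write $\gamma_{t,j}(\tau_{t,i})=\int_{\tau_{t,i}}^{\tau_{t,0}}\psi_{t,j}$. Then \eqref{eq:23i} is immediate: integrate the Lebesgue-function bound of Lemma~\ref{lemma8} over an interval of length $\tau_{t,0}-\tau_{t,i}$, and take $C_4\ge 3/\log 2$ to absorb the additive $1$ (recall $K\ge 2$). For \eqref{eq:23g} I need a bound on each individual basis polynomial, which Lemma~\ref{lemma8} does not give. I would use the explicit form of the Lagrange basis for Chebyshev extrema, namely
\[
\ell_j(z)=\frac{(-1)^{j+1}(1-z^2)T_{K-1}'(z)}{c_j(K-1)^2(z-z_j)},\qquad c_0=c_{K-1}=2,\ c_j=1 \text{ otherwise},
\]
together with the classical fact $\max_{[-1,1]}|\ell_j|\le 1$ (provable via the barycentric representation with alternating weights). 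The affine change of variables preserves sup norms, so $|\psi_{t,j}|\le 1\le 2$ and \eqref{eq:23g} follows. If that classical fact proved awkward to verify cleanly, the fallback is the crude bound $|\psi_{t,j}|\le\sum_k|\psi_{t,k}|$. This yields $2(\tau_{t,0}-\tau_{t,i})$ only up to a $\log K$ factor, which would then propagate into \eqref{eq:23k}. This per-node bound is the only genuinely nontrivial step, and I expect it to be the main obstacle.

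Finally, for \eqref{eq:23j}, use $A_{ji}^{(t)}=\tfrac12\gamma_{t,j}(\tau_{t,i})(1-\tau_{t,j})^{-3/2}$. By \eqref{eq:23h}, $(1-\tau_{t,j})^{-3/2}\lesssim(1-\tau_{t,0})^{-3/2}$, so \eqref{eq:23i} gives the claim after enlarging $C_4$. For \eqref{eq:23k}, write $\sum_j|A_{ji}^{(t)}|^2\le\max_j|A_{ji}^{(t)}|\cdot\sum_j|A_{ji}^{(t)}|$. Bound the maximum by \eqref{eq:23g} and \eqref{eq:23h} as $\lesssim(\tau_{t,0}-\tau_{t,i})(1-\tau_{t,0})^{-3/2}$, and the sum by \eqref{eq:23j}. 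The product gives $C_5(\tau_{t,0}-\tau_{t,i})^2\log K/(1-\tau_{t,0})^3$.
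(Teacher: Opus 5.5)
Your treatment of (a)--(f) and (h)--(k) essentially matches the paper's. However, the argument you give for (g) rests on a false claim. The bound $\max_{[-1,1]}|\ell_j|\le 1$ is not a classical fact for Chebyshev--Lobatto nodes; it holds for Fekete points, i.e.\ the Legendre--Gauss--Lobatto nodes.

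Already for $K=4$, with nodes $1,\tfrac12,-\tfrac12,-1$, one has $\ell_1(z)=\tfrac43(1-z^2)(z+\tfrac12)$. At $z=(\sqrt{13}-1)/6\approx 0.434$ this equals about $1.011$. More generally, your own explicit formula shows that $\theta\mapsto\ell_j(\cos\theta)$ has derivative $\tfrac12\cot\theta_j$ at $\theta_j=j\pi/(K-1)$. This is nonzero for every interior node except the midpoint, so $\max_{[-1,1]}|\ell_j|>1$ for some $j$ whenever $K\ge 4$. No barycentric argument can therefore deliver the constant $1$.

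Your fallback $|\psi_{t,j}|\le\sum_k|\psi_{t,k}|$ does not rescue the statement. It proves (g) only with $2$ replaced by $O(\log K)$. Your max-times-sum argument for (k) then yields $\log^2K$ instead of $\log K$.

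What is missing is a direct per-node estimate with constant $2$, and your explicit formula already makes this available. Write $z=\cos\theta$ with $\theta\in[0,\pi]$.

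\textbf{Endpoints.} Since $|\sin(n\theta)|\le n\sin\theta$, you get $|\ell_0(z)|=\frac{(1+\cos\theta)\,|\sin((K-1)\theta)|}{2(K-1)\sin\theta}\le\frac{1+\cos\theta}{2}\le 1$. The same argument applies symmetrically to $\ell_{K-1}$.

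\textbf{Interior nodes.} For $1\le j\le K-2$, $|\ell_j(z)|=\frac{\sin\theta\,|\sin((K-1)\theta)|}{(K-1)|\cos\theta-\cos\theta_j|}$. Since $(K-1)\theta_j=j\pi$, you have $|\sin((K-1)\theta)|=|\sin((K-1)(\theta-\theta_j))|\le (K-1)|\sin(\theta-\theta_j)|$. The sum-to-product and double-angle identities then give $|\ell_j(z)|\le \frac{\sin\theta\,\cos\frac{\theta-\theta_j}{2}}{\sin\frac{\theta+\theta_j}{2}}$. Finally, $2\sin\frac{\theta+\theta_j}{2}\cos\frac{\theta-\theta_j}{2}=\sin\theta+\sin\theta_j\ge\sin\theta$, so $|\ell_j(z)|\le 2\cos^2\frac{\theta-\theta_j}{2}\le 2$.

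This is exactly how the paper proves (g), and it is why the constant there is $2$ rather than $1$. With this step in place, the rest of your proposal goes through as written.
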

\begin{proof}
See Section~\ref{lemma9_proof}.
\end{proof}

Define
\begin{equation}    \label{assu_1}
    \theta_\tau(x):=\max \left\{-\frac{\log p_{\bar{X}_\tau}(x)}{d \log T}, c_6\right\},
\end{equation}
where $c_6>0$ is a sufficiently large constant.  The next lemma provides a quantitative tail estimate for the conditional distribution of $X_0$ given the continuous-time forward variable $\bar{X}_\tau$, where $\bar{X}_\tau$ is given in \eqref{def_1}.

\begin{lemma}
\label{lemma10}
Suppose that $\tau\ge T^{-c_0}$ where $c_0$ is defined in \eqref{eq:beta1}. Assume that  Assumption~\ref{assup:1} holds.
Then for any constant $c_5\ge 2$,  one has
\[
\mathbb{P}\!\left(
\left\|\sqrt{1-\tau}X_0-y\right\|_2
>
5c_5\sqrt{\theta_\tau(y)\,d\,\tau\log T}
\,\middle|\, \bar{X}_\tau=y
\right)
\le
\exp\!\left(-c_5^2\theta_\tau(y)d\log T\right).
\]
Moreover, there exist absolute constants $C_6,C_7,C_8,C_9>0$ such that
\begin{align}
\mathbb{E}\!\left[\left\|\sqrt{1-\tau}X_0-y\right\|_2 \,\middle|\, \bar{X}_\tau=y\right]
&\le
C_6\sqrt{\theta_\tau(y)\,d\,\tau\log T}, \label{eq:mod_lemma2_ct_m1}\\
\mathbb{E}\!\left[\left\|\sqrt{1-\tau}X_0-y\right\|_2^2 \,\middle|\, \bar{X}_\tau=y\right]
&\le
C_7\,\theta_\tau(y)\,d\,\tau\log T, \label{eq:mod_lemma2_ct_m2}\\
\mathbb{E}\!\left[\left\|\sqrt{1-\tau}X_0-y\right\|_2^3 \,\middle|\, \bar{X}_\tau=y\right]
&\le
C_8\bigl(\theta_\tau(y)\,d\,\tau\log T\bigr)^{3/2}, \label{eq:mod_lemma2_ct_m3}\\
\mathbb{E}\!\left[\left\|\sqrt{1-\tau}X_0-y\right\|_2^4 \,\middle|\, \bar{X}_\tau=y\right]
&\le
C_9\bigl(\theta_\tau(y)\,d\,\tau\log T\bigr)^2. \label{eq:mod_lemma2_ct_m4}
\end{align}
\end{lemma}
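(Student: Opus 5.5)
We prove Lemma~\ref{lemma10} by working with the conditional law of \(X_0\) given \(\bar X_\tau=y\) through Bayes' formula. By \eqref{def_1}, \(\bar X_\tau=\sqrt{1-\tau}X_0+\sqrt{\tau}Z\), so
\[
p_{\sqrt{1-\tau}X_0\mid \bar X_\tau}(x_0\mid y)
=\frac{p_{\sqrt{1-\tau}X_0}(x_0)\,(2\pi\tau)^{-d/2}\exp\!\bigl(-\|y-x_0\|_2^2/(2\tau)\bigr)}{p_{\bar X_\tau}(y)}.
\]
For a radius \(R>0\), set \(B^c:=\{x_0:\|x_0-y\|_2>R\}\). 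The numerator integrated over \(B^c\) is at most \((2\pi\tau)^{-d/2}e^{-R^2/(2\tau)}\), because \(p_{\sqrt{1-\tau}X_0}\) integrates to at most one. The denominator satisfies \(p_{\bar X_\tau}(y)=\exp(-\theta' d\log T)\) with \(\theta'\le\theta_\tau(y)\) by the definition \eqref{assu_1}. Hence
\[
\mathbb P\bigl(\|\sqrt{1-\tau}X_0-y\|_2>R\mid \bar X_\tau=y\bigr)\le \exp\!\Bigl(\theta_\tau(y)d\log T+\tfrac d2\log\tfrac{1}{2\pi\tau}-\tfrac{R^2}{2\tau}\Bigr).
\]

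The second step handles the \(\tau^{-d/2}\) normalization. Since \(\tau\ge T^{-c_0}\), we have \(\tfrac d2\log(1/(2\pi\tau))\le \tfrac{c_0}{2}d\log T\). Choosing \(c_6\ge c_0\), which is allowed since \(c_6\) is a large constant, gives \(\tfrac{c_0}{2}d\log T\le \theta_\tau(y)d\log T\), because \(\theta_\tau\ge c_6\). Taking \(R=5c_5\sqrt{\theta_\tau(y)d\tau\log T}\) makes \(R^2/(2\tau)=12.5c_5^2\theta_\tau(y)d\log T\). The exponent is then at most \((2-12.5c_5^2)\theta_\tau d\log T\le -c_5^2\theta_\tau d\log T\), since \(c_5\ge 2\). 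This proves the tail bound. Its main content is the uniform control of the lower density at \(y\) through \(\theta_\tau\).

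The one place Assumption~\ref{assup:1} might be needed is the case \(p_{\bar X_\tau}(y)=0\) or \(y\) far out. The bound above never uses the moment assumption: the prior mass is at most one and \(\theta_\tau\) absorbs the denominator. I expect this to be the point to double-check, namely that \(\theta_\tau(y)\) may be large for atypical \(y\) but the statement is phrased in terms of \(\theta_\tau(y)\). So the bounded-support condition of earlier work is genuinely not required here, and Assumption~\ref{assup:1} only enters later via Markov when showing \(\theta\) is typically \(O(1)\).

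The moment bounds \eqref{eq:mod_lemma2_ct_m1}--\eqref{eq:mod_lemma2_ct_m4} follow by integrating the tail. Write \(R_0:=\sqrt{\theta_\tau d\tau\log T}\), and for \(m\in\{1,2,3,4\}\) use
\[
\mathbb E[\|\cdot\|^m\mid y]=\int_0^\infty m r^{m-1}\mathbb P(\|\cdot\|>r\mid y)\,dr.
\]
Split the integral at \(r=10R_0\), which corresponds to \(c_5=2\). The part below \(10R_0\) contributes at most \((10R_0)^m\). Above it, substitute \(r=5sR_0\) with \(s\ge2\) and use the tail bound \(e^{-s^2\theta_\tau d\log T}\le e^{-s^2}\), valid because \(\theta_\tau d\log T\ge 1\). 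This part contributes at most \(R_0^m\int_2^\infty m5^m s^{m-1}e^{-s^2}ds\lesssim R_0^m\). Together these give \(C_m(\theta_\tau d\tau\log T)^{m/2}\) with absolute constants \(C_6,\dots,C_9\).
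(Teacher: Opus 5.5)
Your proposal is correct and follows the paper's proof. The paper also uses Bayes' rule, bounds the numerator by $(2\pi\tau)^{-d/2}e^{-R^2/(2\tau)}$, absorbs the denominator through $\theta_\tau(y)$ together with $\tau\ge T^{-c_0}$ and $c_6\ge c_0$, takes $R=5c_5\sqrt{\theta_\tau(y)d\tau\log T}$, and obtains the moments from the tail-integral identity split at $10\sqrt{\theta_\tau(y)d\tau\log T}$.

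The only cosmetic difference is in the moment bounds. You reuse the sharp tail bound with $c_5=s\ge 2$, whereas the paper integrates the cruder bound $\exp\bigl(\tfrac32\theta_\tau(y)d\log T-u^2/(2\tau)\bigr)$ against a Gaussian tail estimate. You are also right that Assumption~\ref{assup:1} is never actually used in this argument.
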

\begin{proof}
See Section~\ref{lemma10_proof}.
\end{proof}

By Tweedie's formula \cite{efron2011tweedie}, the score function \eqref{score_define} can also be expressed as
\begin{equation} \label{eq:scorefin}
    s_\tau^{\star}(x)=-\frac{1}{\tau}\left(x-\sqrt{1-\tau} \mathbb{E}\left[X_0 \mid \bar{X}_\tau=x\right]\right),
\end{equation}
which further implies
\begin{equation*}
    s_\tau^{\star}(\sqrt{1-\tau} x)=-\frac{\sqrt{1-\tau}}{\tau} \int_{x_0} p_{X_0 \mid \bar{X}_\tau}\left(x_0 \mid \sqrt{1-\tau} x\right)\left(x-x_0\right) \mathrm{d} x_0.
\end{equation*}
Based on Lemma \ref{lemma10}, it is straightforward to obtain the following results.
\begin{lemma}
\label{lemma11}
For any $\tau\ge T^{-c_0}$ where $c_0$ is defined in \eqref{eq:beta1}, one has
\begin{align}
\|s_\tau^\star(x)\|_2
&\lesssim
\,\sqrt{\frac{d\,\theta_\tau(x)\log T}{\tau}},
\label{eq:lemma11_score_bound}\\
\left\|\frac{\partial s_\tau^\star(x)}{\partial x}\right\|
&\lesssim
\,\frac{d\,\theta_\tau(x)\log T}{\tau}.
\label{eq:lemma11_score_jac_bound}
\end{align}
Moreover, under the condition
\[
-\log p_{\bar X_\tau}\bigl(\lambda x_1+(1-\lambda)x_2\bigr)\lesssim \,d\log T,
\qquad \forall \lambda\in[0,1],
\]
one has
\begin{equation}
\left\|
\frac{\partial s_\tau^\star(x_1)}{\partial x}
-
\frac{\partial s_\tau^\star(x_2)}{\partial x}
\right\|
\lesssim
\,\sqrt{\frac{d^3\log^3 T}{\tau^3}}\,\|x_1-x_2\|_2 .
\label{eq:lemma11_score_jac_lip_bound}
\end{equation}
\end{lemma}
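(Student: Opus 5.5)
We derive all three bounds from Tweedie's formula \eqref{eq:scorefin} and the conditional moment bounds of Lemma~\ref{lemma10}, writing \(\mu(x):=\mathbb E[\sqrt{1-\tau}X_0\mid \bar X_\tau=x]\) and \(\Delta := x-\sqrt{1-\tau}X_0\). For \eqref{eq:lemma11_score_bound}, Tweedie gives \(s_\tau^\star(x)=-\tau^{-1}\mathbb E[\Delta\mid \bar X_\tau=x]\). Jensen and \eqref{eq:mod_lemma2_ct_m1} then yield \(\|s_\tau^\star(x)\|_2\le \tau^{-1}C_6\sqrt{\theta_\tau(x)d\tau\log T}\), which is the claim.

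For \eqref{eq:lemma11_score_jac_bound}, we differentiate the posterior. Since \(p_{X_0\mid\bar X_\tau}(x_0\mid x)\propto p_{X_0}(x_0)\exp(-\|x-\sqrt{1-\tau}x_0\|_2^2/(2\tau))\), differentiating the normalized posterior mean gives the standard identity
\[
\frac{\partial s_\tau^\star(x)}{\partial x}=-\frac{1}{\tau}I_d+\frac{1}{\tau^2}\operatorname{Cov}\bigl(\sqrt{1-\tau}X_0\mid \bar X_\tau=x\bigr).
\]
The operator norm of the covariance is at most \(\mathbb E[\|\Delta\|_2^2\mid \bar X_\tau=x]\), which by \eqref{eq:mod_lemma2_ct_m2} is at most \(C_7\theta_\tau(x)d\tau\log T\). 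Since \(\theta_\tau\ge c_6\) and \(d\log T\ge 1\), the \(\tau^{-1}I_d\) term is absorbed, giving \(\lesssim d\theta_\tau(x)\log T/\tau\).

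For the Lipschitz bound \eqref{eq:lemma11_score_jac_lip_bound}, we use the mean value theorem along the segment \(x_\lambda=\lambda x_1+(1-\lambda)x_2\). It then suffices to bound the third derivative of \(\log p_{\bar X_\tau}\) uniformly on the segment. Differentiating the covariance once more gives \(\tau^{-3}\) times the third conditional cumulant tensor of \(\sqrt{1-\tau}X_0\) given \(\bar X_\tau=x_\lambda\), whose directional action is
\[
\mathbb E\bigl[\langle \Delta-\mathbb E\Delta,u\rangle\langle \Delta-\mathbb E\Delta,v\rangle\langle \Delta-\mathbb E\Delta,w\rangle\mid\bar X_\tau=x_\lambda\bigr],
\qquad \|u\|_2=\|v\|_2=\|w\|_2=1.
\]
Its norm is bounded by \(\lesssim\mathbb E[\|\Delta\|_2^3\mid \bar X_\tau=x_\lambda]\), using centering costs at most a constant factor by Jensen. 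Applying \eqref{eq:mod_lemma2_ct_m3} bounds this by \(\lesssim(\theta_\tau(x_\lambda)d\tau\log T)^{3/2}\). The hypothesis \(-\log p_{\bar X_\tau}(x_\lambda)\lesssim d\log T\) gives \(\theta_\tau(x_\lambda)\lesssim 1\) uniformly in \(\lambda\), since \(c_6\) is a constant. Hence the third-derivative norm is \(\lesssim \tau^{-3}(d\tau\log T)^{3/2}=\sqrt{d^3\log^3T/\tau^3}\), and integrating over \(\lambda\) gives the stated bound.

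The main obstacle is checking that the differentiation identities are legitimate, i.e.\ that derivatives can be exchanged with the conditional expectation under only a second-moment assumption on \(X_0\). This holds because for fixed \(\tau\ge T^{-c_0}>0\) the Gaussian kernel makes \(p_{\bar X_\tau}\) smooth with dominated derivatives. The constants are uniform because Lemma~\ref{lemma10} applies for every \(\tau\ge T^{-c_0}\).
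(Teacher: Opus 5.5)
Your proof is correct and follows the paper's argument for \eqref{eq:lemma11_score_bound} and \eqref{eq:lemma11_score_jac_bound}: Tweedie's formula, the posterior-covariance form of the Jacobian, and the first and second conditional moment bounds of Lemma~\ref{lemma10}. For \eqref{eq:lemma11_score_jac_lip_bound} the paper only cites Claim~(88) of \cite{li2024sharp}, whereas you give a self-contained argument: you integrate $\nabla^3\log p_{\bar X_\tau}$ along the segment, identify it as $\tau^{-3}$ times the third conditional cumulant, and bound that cumulant with the third-moment estimate \eqref{eq:mod_lemma2_ct_m3} together with $\theta_\tau(x_\lambda)\lesssim 1$ from the hypothesis. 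This is valid, removes the lemma's dependence on the external reference, and uses a bound Lemma~\ref{lemma10} already provides.
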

\begin{proof}
The first estimate follows directly from Tweedie's formula  and the first-moment bound in Lemma~\ref{lemma10}. The second estimate follows from the \eqref{eq:scorefin} together with the second-moment bound in Lemma~\ref{lemma10}. The third estimate is standard and follows, for instance, from the argument of \cite[Claim~(88)]{li2024sharp}.
\end{proof}


\begin{lemma} \cite[Lemma 4]{li2025faster} \label{lemma12}
Suppose that $-\log p_{\bar X_{\tau_e}}(x_{\tau_e}^\star)\le \theta d\log T$  for some $T^{-c_0} \le  \tau_e <1$ where $c_0$ is defined in \eqref{eq:beta1} and some $\theta>1$, then for every $\tau'$ satisfying $|\tau'-\tau_e|\le c_4\tau_e(1-\tau_e)$, one has
\[
-\log p_{\bar X_{\tau'}}(x_{\tau'}^\star)\le 2\theta d\log T.
\]
Here, $\bar{X}_\tau$ is given in \eqref{def_1} and $c_4>0$ is some sufficiently small constant.
\end{lemma}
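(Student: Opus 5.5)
The statement to prove is Lemma~\ref{lemma12}. It is cited from \cite[Lemma 4]{li2025faster}, but that proof used bounded support. I will therefore rebuild it using only Lemma~\ref{lemma10}, which holds under Assumption~\ref{assup:1}.

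The plan is to track $f(\tau):=-\log p_{\bar X_\tau}(x_\tau^\star)$ along the exact probability flow ODE and show it cannot double over a window of relative length $c_4$. By the continuity equation for the probability flow ODE, the log-density along a characteristic evolves by minus the divergence of the velocity field. Since the drift is $-\frac{1}{2(1-\tau)}(x+s_\tau^\star(x))$, we get
\[
\frac{\mathrm d}{\mathrm d\tau}\log p_{\bar X_\tau}(x_\tau^\star)
=\frac{1}{2(1-\tau)}\Bigl(d+\operatorname{tr}\frac{\partial s_\tau^\star}{\partial x}(x_\tau^\star)\Bigr).
\]
Here the sign convention is fixed by the fact that the flow pushes $p_{\bar X_{\tau}}$ to itself. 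Hence
\[
|f'(\tau)|\le \frac{1}{2(1-\tau)}\Bigl(d+d\,\Bigl\|\frac{\partial s^\star_\tau}{\partial x}(x_\tau^\star)\Bigr\|\Bigr).
\]

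The derivative is controlled using Lemma~\ref{lemma11}, namely $\|\partial s_\tau^\star\|\lesssim d\,\theta_\tau\log T/\tau$. A cleaner route is to write the trace via Tweedie's second-order identity:
\[
\operatorname{tr}\frac{\partial s_\tau^\star}{\partial x}(x)=-\frac d\tau+\frac{1}{\tau^2}\operatorname{tr}\operatorname{Cov}\bigl(\sqrt{1-\tau}X_0\mid\bar X_\tau=x\bigr).
\]
The conditional second moment in \eqref{eq:mod_lemma2_ct_m2} bounds the covariance trace by $C_7\theta_\tau(x)d\tau\log T$. This gives
\[
|f'(\tau)|\lesssim \frac{d\,\theta_\tau(x_\tau^\star)\log T}{\tau(1-\tau)}\lesssim \frac{\max\{f(\tau),c_6 d\log T\}}{\tau(1-\tau)},
\]
a linear differential inequality in $g(\tau):=\max\{f(\tau),c_6d\log T\}$.

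The conclusion then follows from a continuity (bootstrap) argument. Let $\tau^\dagger$ be the first time in the window $|\tau'-\tau_e|\le c_4\tau_e(1-\tau_e)$ at which $f$ reaches $2\theta d\log T$, and assume $\theta\ge c_6$ (otherwise replace $\theta$ by $\max\{\theta,c_6\}$; this matches how the lemma is used with $\theta=C_4$ large). Before $\tau^\dagger$ we have $g\le 2\theta d\log T$. Throughout the window, $\tau\asymp\tau_e$ and $1-\tau\asymp1-\tau_e$ for $c_4$ small. Grönwall then gives
\[
g(\tau)\le g(\tau_e)\exp\Bigl(C\,\frac{|\tau-\tau_e|}{\tau_e(1-\tau_e)}\Bigr)\le \theta d\log T\,e^{Cc_4}<2\theta d\log T
\]
once $c_4<\log 2/C$, contradicting the definition of $\tau^\dagger$. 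The condition $\tau\ge T^{-c_0}$ (needed for Lemma~\ref{lemma10}) holds throughout the window since $\tau\ge\tau_e/2$; at worst $c_0$ is adjusted slightly.

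The main obstacle is that the divergence bound must be linear in $\theta$, i.e.\ proportional to $f$ itself, rather than depending on a support radius as in \cite{li2025faster}. This is exactly what the moment bounds of Lemma~\ref{lemma10} under Assumption~\ref{assup:1} supply, so most of the care goes into verifying that Lemma~\ref{lemma10} indeed gives $\operatorname{tr}\operatorname{Cov}\lesssim\theta d\tau\log T$ uniformly, including the regime where $\theta_\tau$ is clipped at $c_6$.
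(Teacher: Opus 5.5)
The paper gives no proof of this lemma; it imports it from \cite[Lemma 4]{li2025faster} without argument, so your proposal is a self-contained substitute for a citation. It is essentially correct. The transport identity (sign included), the Tweedie trace formula and \eqref{eq:mod_lemma2_ct_m2} give
$|f'(\tau)|\le \frac{d}{2\tau}+\frac{C_7\,g(\tau)}{2\tau(1-\tau)}$, where $g=\max\{f,c_6d\log T\}=d\,\theta_\tau(x^\star_\tau)\log T$. Gr\"onwall on the window, where $\tau\asymp\tau_e$ and $1-\tau\asymp1-\tau_e$, then closes the argument.

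Two small simplifications:
\begin{itemize}
\item The bootstrap time $\tau^\dagger$ is unnecessary, because the inequality is linear in $g$ and holds on the whole two-sided window.
\item No adjustment of $c_0$ is needed. The proof of Lemma~\ref{lemma10} only uses $(2\pi\tau)^{-d/2}\le T^{c_0d/2}$, which still holds for $\tau\ge(1-c_4)T^{-c_0}$.
\end{itemize}

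What your route buys over the citation is a transparent check that the lemma survives the paper's move from bounded support to Assumption~\ref{assup:1}. In fact the proof of Lemma~\ref{lemma10} uses no property of $p_{\mathrm{data}}$ at all, so the lemma holds for an arbitrary data distribution.

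One caution: the trace route is not merely cleaner but necessary. Bounding $|\operatorname{tr}\,\partial s^\star_\tau|\le d\,\|\partial s^\star_\tau\|$ via \eqref{eq:lemma11_score_jac_bound} loses a factor $d$. It would only give $f\le \theta d\log T+O(c_4d^2\theta\log T)$, so drop that first estimate.

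Your restriction to $\theta\ge c_6$ covers the paper's use, since Lemma~\ref{lemma14} applies the lemma with $\theta_t\ge c_6$. It is nonetheless avoidable, and you can prove the full stated range $\theta>1$ by integrating $f'$ additively rather than multiplicatively:
\[
f(\tau')\le f(\tau_e)+\Bigl|\int_{\tau_e}^{\tau'}|f'(u)|\,\mathrm{d}u\Bigr|
\le \theta d\log T+C'c_4\max\{\theta,c_6\}\,d\log T .
\]
The second step uses $|f'|\lesssim g/(\tau(1-\tau))$ together with your Gr\"onwall bound $g\le e^{O(c_4)}\max\{\theta,c_6\}\,d\log T$ on the window. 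Since $\theta>1$ and $c_6\ge1$ give $\max\{\theta,c_6\}\le c_6\theta$, the right-hand side is at most $2\theta d\log T$ once $c_4\le 1/(C'c_6)$.

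Letting $c_4$ depend on $c_6$ is harmless. In the applications $c_4$ only needs to be some fixed constant, because by \eqref{eq:23f} each interval $[\tau_{t,K-1},\tau_{t,0}]$ has length $O(\tau(1-\tau)\log T/T)$.
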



\begin{lemma} \label{lemma13}
Assume that $\tau\in[\tau_{t,K-1},\tau_{t,0}]$ and  $-\log p_{\bar{X}_\tau}(x_\tau^\star)\le \theta\, d\log T$
for some $\theta\ge c_6$ where $c_6$ is defined in \eqref{assu_1}.  Set
\[
u_\tau^\star:=\frac{x_\tau^\star}{\sqrt{1-\tau}} \quad \mbox{and} \quad J_\tau^\star := \frac{\partial u_\tau^\star}{\partial u_{\tau_{t,0}}^\star} = \frac{\partial (x_\tau^\star/\sqrt{1-\tau})} {\partial (x_{\tau_{t,0}}^\star/\sqrt{1-\tau_{t,0}})}.
\]
Then there exist universal constants $M,C_{\mathrm s},C_{\mathrm J}>0$,
independent of $k,K,\tau,d,T,\theta$ where $M=\max\{M_1,M_2\}$ in proof, such that for every integer
$0\le k\le K$,
\begin{align}
&\left\|
\frac{\partial^k}{\partial\tau^k}
\left(
\frac{s_\tau^\star(x_\tau^\star)}{(1-\tau)^{3/2}}
\right)
\right\|_2
\le
C_{\mathrm s}\,(MK)^k k!\,
\sqrt{\frac{d\theta\log T}{\tau(1-\tau)^3}}
\left(
\frac{d\theta\log T}{\tau(1-\tau)}
\right)^k, \label{eq:lemma131}
\\[1ex]
&\left\|
\frac{\partial^k}{\partial\tau^k}
\left[
\frac{1}{(1-\tau)^{3/2}}
\frac{\partial s_\tau^\star(x_\tau^\star)}
{\partial u_\tau^\star}\,
J_\tau^\star
\right]
\right\|
\le
C_{\mathrm J}\,(MK)^{k+1} k!\,
\left(
\frac{d\theta\log T}{\tau(1-\tau)}
\right)^{k+1}. \label{eq:lemma132}
\end{align}
\end{lemma}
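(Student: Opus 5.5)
We will prove Lemma~\ref{lemma13} by induction on $k$, differentiating the probability flow ODE along the exact trajectory and using a Fa\`a di Bruno / Cauchy-product bookkeeping of the derivatives. We work with the rescaled variable $u_\tau^\star = x_\tau^\star/\sqrt{1-\tau}$. By \eqref{ODE} it satisfies $\frac{\mathrm d}{\mathrm d\tau}u_\tau^\star = -\frac12 (1-\tau)^{-3/2} s_\tau^\star(\sqrt{1-\tau}\,u_\tau^\star)$. Hence $\partial_\tau^{k+1}u_\tau^\star$ equals $-\frac12$ times the $k$-th derivative in \eqref{eq:lemma131}, and \eqref{eq:lemma131} is equivalent to a bound on all $\tau$-derivatives of the trajectory.

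The first step is to represent $g(\tau,u):=(1-\tau)^{-3/2}s_\tau^\star(\sqrt{1-\tau}u)$ through Tweedie's formula \eqref{eq:scorefin}, as a posterior expectation of $X_0$ under a Gaussian tilt. Mixed partial derivatives $\partial_\tau^a\partial_u^b g$ are then posterior cumulants of order $a+b+1$ of $\sqrt{1-\tau}X_0-y$ (with explicit polynomial-in-$\tau$ weights). We bound them by Lemma~\ref{lemma10}, extended to higher moments via its subgaussian tail bound: the $m$-th conditional moment is at most $(C m)^{m/2}(\theta d\tau\log T)^{m/2}$. Each $\tau$- or $u$-derivative then costs a factor $\lesssim \theta d\log T/(\tau(1-\tau))$, up to combinatorial constants growing like $m^{m/2}\le m!\,C^m$. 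This is valid at the point $x_\tau^\star$ itself because of the hypothesis $-\log p\le\theta d\log T$. Combined with Lemma~\ref{lemma12}, the hypothesis propagates to all nearby $\tau'$ with $|\tau'-\tau|\le c_4\tau(1-\tau)$, which contains the whole interval by \eqref{eq:23f}.

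Second, we write $\partial_\tau^{k} [g(\tau,u_\tau^\star)]$ via Fa\`a di Bruno as a sum over partitions of products of $\partial^a_\tau\partial^b_u g$ and lower derivatives $\partial_\tau^{j}u^\star_\tau$ with $j\le k$. We then run a majorant (Cauchy–Kovalevskaya type) argument. Set $R=\theta d\log T/(\tau(1-\tau))$ and postulate $\|\partial_\tau^j u^\star\|\le C_s (MK)^{j-1}(j-1)!\,\sqrt{d\theta\log T/(\tau(1-\tau)^3)}\,R^{j-1}$. Substituting into Fa\`a di Bruno, each product is bounded by the same form with factorial weights. The number of partitions is controlled by the majorant series $\sum_j (MK R z)^j$. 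Choosing $M$ large absorbs all combinatorial constants, and the factor $K$ absorbs the ratio $k!/\prod j_i!$ growth since $k\le K$. Note also that $\|\partial_\tau u^\star\|\cdot 1/\sqrt{\theta d\tau\log T}\lesssim R$, so each $u$-derivative of $g$ multiplied by $\partial_\tau u^\star$ contributes at most a factor $R$, consistent with the $\tau$-derivatives. This closes the induction for \eqref{eq:lemma131}.

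For \eqref{eq:lemma132}, we differentiate the variational equation $\frac{\mathrm d}{\mathrm d\tau}J_\tau^\star = -\frac12 (1-\tau)^{-3/2}\frac{\partial s^\star_\tau}{\partial u}J_\tau^\star$, so the target is essentially $-2\,\partial_\tau^{k+1}J^\star_\tau$. We run the same induction jointly with \eqref{eq:lemma131}, using Leibniz on the product $(\partial_u g)\,J$ together with $\|J_\tau^\star\|\lesssim 1$ on the interval, which follows from Gr\"onwall, \eqref{eq:lemma11_score_jac_bound} and \eqref{eq:23f}. The base case $k=0$ follows from \eqref{eq:lemma11_score_jac_bound}, and the extra power $R^{k+1}$ reflects one additional $u$-derivative. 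The main obstacle is the combinatorial step: getting a constant $M$ uniform in $k\le K$ with exactly $(MK)^k k!$ growth. Our first attempt there is to bound the higher posterior cumulants crudely by moments times $m!\,C^m$ and verify that the majorant recursion closes with $M$ independent of $K$.
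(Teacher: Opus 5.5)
Your route is sound but genuinely different from the paper's. The paper never separates the vector field from the trajectory. Following \cite{li2025faster}, it writes $s^\star_{\tau+\delta}(x^\star_{\tau+\delta})/(1-\tau-\delta)^{3/2}$ as a ratio of integrals against the posterior at $\tau$ reweighted by $e^{\Delta}$, where $\Delta$ already contains the moving point $x^\star_{\tau+\delta}$. It then runs a joint induction on the Taylor coefficients $u_k$ of $s^\star_\tau(x^\star_\tau)/(1-\tau)^{3/2}$ and $v_k$ of $\Delta$, passing through $w_k$, $a_k$, $b_k$, $d_k$. Its Jacobian part is essentially your last step: the covariance form of Tweedie, bounds on $m_k$, then $\partial_\tau J^\star_\tau=-\frac12 m_0 J^\star_\tau$ and Leibniz.

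You instead treat $g(\tau,u)$ as a fixed analytic field whose Taylor coefficients at the single point $(\tau,u^\star_\tau)$ are posterior cumulants, and you transport them along the ODE by a Cauchy majorant. This is more modular and, done carefully, gives more. Suppose $\|\partial_\tau^a\partial_u^b g\|\le C A^{a+b}(a+b)!\,R^a\rho^b S$, with $S=\sqrt{d\theta\log T/(\tau(1-\tau)^3)}$ and $\rho=\sqrt{d\theta(1-\tau)\log T/\tau}$. Because $\rho S=R$, the scalar majorant $V'=CS/(1-ARs-A\rho V)$ has Taylor radius $\asymp 1/R$, which yields \eqref{eq:lemma131} with $M$ independent of $K$. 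In your argument $K$ then enters only through needing conditional moments of order $O(K)\ll\theta d\log T$. The paper, by contrast, uses the extra factor $K$ to absorb terms such as $(1+c/K)^{k-1}$ and $k/K$ in its coefficient recursions. The price of your route is bounding joint cumulant tensors in operator norm and running Fa\`a di Bruno in $\mathbb{R}^d$; the paper's route reuses an existing computation and avoids both.

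Three details of your sketch should be corrected, though none breaks it.

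\emph{The $\tau$-derivatives.} At fixed $u$ the posterior is proportional to $p_{X_0}(x_0)e^{-\lambda\|u-x_0\|^2/2}$ with $\lambda=(1-\tau)/\tau$. Each $\partial_\tau$ therefore brings down the quadratic statistic $\|u-X_0\|^2/(2\tau^2)$. So the coefficients are joint cumulants of degree up to $2a+b+1$ in $X_0-u$, not cumulants of order $a+b+1$ of a linear statistic. It is the size $\theta d\tau\log T/(1-\tau)$ of this quadratic statistic that produces the factor $R$ per $\tau$-derivative.

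\emph{The $u$-derivatives.} A $u$-derivative costs the relative factor $\lambda\sqrt{\theta d\tau\log T/(1-\tau)}=\rho$, not $1/\sqrt{\theta d\tau\log T}$. The inequality you display does not follow from $\|\partial_\tau u^\star_\tau\|\lesssim S$: it would need $\sqrt{1-\tau}\,\theta d\log T\gtrsim 1$, which is false near $\tau=1$. The correct pairing $\rho\,\|\partial_\tau u^\star_\tau\|\lesssim\rho S=R$ is exactly what closes the majorant.

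\emph{Use of Lemma~\ref{lemma12}.} It is not needed for the pointwise recursion, only for the Gr\"onwall bound $\|J^\star_\tau\|\lesssim 1$. As in the paper, that bound tacitly requires $\theta d\log^2T/T\lesssim 1$.
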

\begin{proof}
See Section~\ref{proof_lemma13}.
\end{proof}


\begin{lemma}  \label{lemma14}
Let
$\theta_t:=\theta_{\tau_{t,0}}(x_{\tau_{t,0}})$, where $\theta_{\tau_{t,0}}(x_{\tau_{t,0}})$ is defined in \eqref{assu_1}.
Assume that
\begin{equation} \label{eq:lemma14_condition_here}
C_{10}\left\{
\frac{\theta_t d\log^2 T}{T}
+
\frac{\sqrt{\theta_t d\log^3 T \log K\sum_{m,j}\left(\varepsilon_{\mathrm{score}, t, j}^{(m)}\left(x_{\tau_{t, j}}^{(m)}\right)\right)^2}\,
}{T}
\right\}\le 1
\end{equation}
for some sufficiently large constant $C_{10}>0$, where $\sum_{m,j}:=\sum_{m=0}^{N}\sum_{j=0}^{K-1}$ and $\varepsilon_{\mathrm{score}, t, j}^{(m)}(\cdot)$ is defined in \eqref{def_vareps}.
Then for all $0\le i\le K-1$, $0\le n\le N-1$, and all $\lambda\in[0,1]$,
\begin{equation}
-\log p_{\bar{X}_{\tau_{t,i}}}
\Bigl(
\lambda x_{\tau_{t,i}}^{(n+1)}+(1-\lambda)x_{\tau_{t,i}}^\star
\Bigr)
\le 2.1\,d\theta_t\log T,
\label{eq:lemma14_typical_line_final}
\end{equation}
and
\begin{equation*}
\log
\frac{
p_{\sqrt{\frac{1-\tau_{t,0}}{1-\tau_{t,i}}}\bar{X}_{\tau_{t,i}}}
\!\left(
\sqrt{\frac{1-\tau_{t,0}}{1-\tau_{t,i}}}\,x_{\tau_{t,i}}^{(n+1)}
\right)
}{
p_{\bar{X}_{\tau_{t,0}}}(x_{\tau_{t,0}})
}
\le
\frac{4c_1 d\log T}{T}
+
C_{10}\left\{
\frac{d^2\theta_t^2\log^4 TK}{T^2}
+
\frac{
\sqrt{d\theta_t\log^3T\,\log K\sum_{m,j}\left(\varepsilon_{\mathrm{score}, t, j}^{(m)}\left(x_{\tau_{t, j}}^{(m)}\right)\right)^2}\;
}{T}
\right\}.
\end{equation*}
\end{lemma}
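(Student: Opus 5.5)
We prove both claims together by an induction over the Gauss--Seidel iterations $n$ and, inside each sweep, over the node index $i$. The approach is to compare each iterate with the exact trajectory $x^\star$ and to propagate the typicality of $x^\star$ to nearby points.

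\textbf{Step 1 (typicality of the exact trajectory).} With $\theta_t=\theta_{\tau_{t,0}}(x_{\tau_{t,0}})$ we have $-\log p_{\bar X_{\tau_{t,0}}}(x_{\tau_{t,0}})\le\theta_t d\log T$. By \eqref{eq:23f}, every node satisfies $|\tau_{t,i}-\tau_{t,0}|\le c_4\tau_{t,0}(1-\tau_{t,0})$ once $T$ is large. Lemma~\ref{lemma12} then gives $-\log p_{\bar X_{\tau}}(x^\star_{\tau})\le 2\theta_t d\log T$ for all $\tau\in[\tau_{t,K-1},\tau_{t,0}]$. Consequently, Lemma~\ref{lemma11} bounds the score and Jacobian along $x^\star$ by $\sqrt{d\theta_t\log T/\tau}$ and $d\theta_t\log T/\tau$ respectively.

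\textbf{Step 2 (error recursion).} Set $e^{(n)}_i:=(x^{(n)}_{\tau_{t,i}}-x^\star_{\tau_{t,i}})/\sqrt{1-\tau_{t,i}}$. Subtract the exact integral identity \eqref{ODE_score}, written on $[\tau_{t,i},\tau_{t,0}]$, from \eqref{eq:gs_update}. This splits $e^{(n+1)}_i$ into three pieces:
\begin{itemize}
\item[(a)] an interpolation remainder, bounded via Lemma~\ref{lemma13} with $k=K$ and the Chebyshev--Lobatto node polynomial, of size $\lesssim (\tau_{t,0}-\tau_{t,i})(1-\tau)^{-3/2}\sqrt{d\theta_t\log T/\tau}\,(M_0 K d\theta_t\log^2T/T)^K$ (the $(\tau_{t,0}-\tau_{t,K-1})^K$ factor combines with $(d\theta\log T/\tau)^K$ through \eqref{eq:23f});
\item[(b)] score-estimation terms $\sum_j |A^{(t)}_{ji}|\,\varepsilon^{(m)}_{\mathrm{score},t,j}$;
\item[(c)] Lipschitz terms $\sum_j |A^{(t)}_{ji}|\,\|s^\star(x^{(m)}_{\tau_{t,j}})-s^\star(x^\star_{\tau_{t,j}})\|$, with $m=n+1$ for $j<i$ and $m=n$ for $j\ge i$.
\end{itemize}
To control (c) we use the mean value theorem together with the Jacobian bound \eqref{eq:lemma11_score_jac_bound} along the segment. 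This needs the segment to be typical, which is exactly the induction hypothesis \eqref{eq:lemma14_typical_line_final} for already-updated indices (and for $n$ at the not-yet-updated ones; the base case $n=0$ uses $x^{(0)}=x_{\tau_{t,0}}$, which is close to $x^\star$ by Step 1 and the short interval). Using \eqref{eq:23j} and \eqref{eq:23f}, the Lipschitz factor becomes $\rho:=C\theta_t d\log K\log T/T\le 1/2$ under \eqref{eq:lemma14_condition_here}. Combining with Cauchy--Schwarz and \eqref{eq:23k} for (b) yields
\[
\max_i\|e^{(n+1)}_i\|\le \rho\max_i\|e^{(n)}_i\| + \frac{C\log T}{T}\sqrt{\log K\,\theta_t d\textstyle\sum_{m,j}\varepsilon^2}\;\cdot\frac{1}{\sqrt{\theta_t d}}\,(\ldots)+\text{(interp.)}.
\]
Thus $\|x^{(n+1)}_{\tau_{t,i}}-x^\star_{\tau_{t,i}}\|\lesssim \sqrt{\tau_{t,0}}\,\delta$, where
\[
\delta\lesssim \frac{\theta_t d\log^2T}{T}+\frac{\sqrt{\log K\log^2T\sum\varepsilon^2}}{T}\sqrt{\tau_{t,0}}.
\]

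\textbf{Step 3 (typicality of the segment).} For $y_\lambda=\lambda x^{(n+1)}+(1-\lambda)x^\star$, the map $\log p_{\bar X_\tau}$ has gradient $s^\star$ and Hessian bounded as in Lemma~\ref{lemma11}. A second-order Taylor expansion therefore gives
\[
-\log p(y_\lambda)\le 2\theta_t d\log T+\|s^\star(x^\star)\|\,\|y_\lambda-x^\star\|+\tfrac{d\theta\log T}{\tau}\|y_\lambda-x^\star\|^2 .
\]
Inserting Step 2, the extra terms are $d\theta_t\log T\cdot O(\sqrt{\theta_t d\log^2 T/T}+\cdots)$. By \eqref{eq:lemma14_condition_here} with $C_{10}$ large, these are at most $0.1\,d\theta_t\log T$. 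This closes the induction and proves \eqref{eq:lemma14_typical_line_final}. The main obstacle is this bootstrap: the Hessian bound is only valid on typical points, and we must prevent circularity. We handle it by a continuity argument in $\lambda$: the set of $\lambda$ where the bound holds with $2.1$ is closed, and the Taylor estimate with constant $2.2$ improves it to $2.1$.

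\textbf{Step 4 (density ratio).} Write the rescaled density ratio as
\[
\log\frac{p_{\bar X_{\tau_{t,i}}}(x^\star_{\tau_{t,i}})}{p_{\bar X_{\tau_{t,0}}}(x_{\tau_{t,0}})}+\log\frac{p_{\bar X_{\tau_{t,i}}}(x^{(n+1)})}{p_{\bar X_{\tau_{t,i}}}(x^\star)},
\]
plus the Jacobian of the scaling $\sqrt{(1-\tau_{t,0})/(1-\tau_{t,i})}$. The scaling contributes $\tfrac d2\log\frac{1-\tau_{t,i}}{1-\tau_{t,0}}\le 4c_1 d\log T/T$ by \eqref{eq:23c}. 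The first term is controlled by the exact-flow density evolution: along the probability flow ODE, $\frac{d}{d\tau}\log p(x^\star_\tau)$ equals a divergence term, which cancels against the scaling change up to $O(d^2\theta^2\log^4T\,K/T^2)$, as in \cite[Lemma~7]{li2025faster}. The second term is bounded by the Taylor expansion of Step 3, which gives $\sqrt{d\theta\log T/\tau}\,\|x^{(n+1)}-x^\star\|+\frac{d\theta\log T}{\tau}\|x^{(n+1)}-x^\star\|^2$. Inserting $\delta$ produces exactly the two $C_{10}$ terms.
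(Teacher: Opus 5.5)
\textbf{Verdict: genuine gap in Step 4.} As written, your argument does not prove the density-ratio claim, for two concrete reasons.

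\textbf{What matches the paper, and where you diverge.} Your Steps 1--3 follow the paper's structure: the same three-way split of the error (interpolation remainder, score error, Lipschitz term), a contraction within each Gauss--Seidel sweep, and a bootstrap for typicality of the segment. Your continuity-in-$\lambda$ argument is a legitimate substitute for the paper's two-sided claim \eqref{eq:lemma14_claim}. Step 4 takes a genuinely different route. You split the ratio at $x^\star_{\tau_{t,i}}$ into an exact-flow part plus a local Taylor perturbation. The paper instead compares directly with $x_{\tau_{t,0}}$ through the posterior representation \eqref{eq:lemma14_bayes_last_node} and a shell/Jensen argument.

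\textbf{First problem: the error bound from Step 2 is too crude.} Your $\delta$ has leading term $\theta_t d\log^2T/T$. The linear Taylor term is then $\|s^\star_{\tau_{t,i}}(x^\star_{\tau_{t,i}})\|_2\sqrt{\tau_{t,0}}\,\delta\asymp(d\theta_t)^{3/2}\log^{5/2}T/T$. This is first order in $1/T$ and exceeds both $4c_1d\log T/T$ and $C_{10}d^2\theta_t^2K\log^4T/T^2$ by factors that grow with $d$ and $T$. So ``inserting $\delta$ produces exactly the two $C_{10}$ terms'' is false. This is not cosmetic: Lemma~\ref{lemma15} sums this increment over all $T$ steps, which only works if the $\theta_t$-dependent part is second order.

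The fix is to keep the contraction on the initialization error. For $n+1\ge 1$, your recursion gives $\max_i\|x^{(n+1)}_{\tau_{t,i}}-x^\star_{\tau_{t,i}}\|_2\lesssim\rho\max_j\|x^{(0)}_{\tau_{t,j}}-x^\star_{\tau_{t,j}}\|_2+(\text{score})+(\text{interp.})$. You also need the initialization bound $\|x^{(0)}_{\tau_{t,j}}-x^\star_{\tau_{t,j}}\|_2\lesssim\sqrt{\tau_{t,0}d\theta_t\log^3T}/T$; the paper imports it from \cite[(90)]{li2025faster}, and you only gesture at it. Together these give \eqref{eq:lemma14_induction_goal_u}. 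The linear Taylor term is then $d^2\theta_t^2K\log^4T/T^2$, and the quadratic terms are absorbed via \eqref{eq:lemma14_condition_here}.

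A smaller point: $\rho$ carries $\log^2T$, not $\log T$. Because of the extra $\log K$, $\rho\le 1/2$ does not follow from \eqref{eq:lemma14_condition_here} alone. As in the paper's bound on $J_2$, you need $T\ge C_2d\log^4T$ with $\theta_t=O(1)$.

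\textbf{Second problem: the exact-flow term does not cancel the rescaling to second order.} The continuity equation, together with $\frac{\partial s^\star_\tau}{\partial x}=-\frac1\tau I+\frac{1-\tau}{\tau^2}\operatorname{Cov}(X_0\mid\bar X_\tau=x)$, gives
\[
\frac{\mathrm d}{\mathrm d\tau}\log p_{\bar X_\tau}(x^\star_\tau)=-\frac{d}{2\tau}+\frac{\operatorname{tr}\operatorname{Cov}(X_0\mid\bar X_\tau=x^\star_\tau)}{2\tau^2}.
\]
So the exact-flow term plus the rescaling equals $\int_{\tau_{t,i}}^{\tau_{t,0}}\bigl(\frac{d}{2\tau(1-\tau)}-\frac{\operatorname{tr}\operatorname{Cov}(X_0\mid\bar X_\tau=x^\star_\tau)}{2\tau^2}\bigr)\,\mathrm d\tau$. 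This is a first-order quantity. For point-mass data it equals $\frac d2\log\frac{\tau_{t,0}(1-\tau_{t,i})}{\tau_{t,i}(1-\tau_{t,0})}>0$. In general the covariance part is only controlled in size by $O(d\theta_t\log^2T/T)$.

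The lemma's one-sided bound holds because the covariance term is nonpositive and can be dropped. The remaining term satisfies $\int_{\tau_{t,i}}^{\tau_{t,0}}\frac{d}{2\tau(1-\tau)}\,\mathrm d\tau\le\frac{d(\tau_{t,0}-\tau_{t,i})}{2\tau_{t,i}(1-\tau_{t,0})}\le 4c_1d\log T/T$ by \eqref{eq:23f}. So the $4c_1d\log T/T$ comes from this combined term, not from the rescaling. The rescaling alone is at most $c_1d\log T/T$, via \eqref{eq:23a} rather than \eqref{eq:23c}. This sign is the missing idea. In the paper it is encoded in the prefactor $1+\frac d2\frac{\tau_{t,0}-\tau_{t,i}}{(1-\tau_{t,0})\tau_{t,i}}$ and the negative quadratic exponent in \eqref{eq:lemma14_bayes_last_node}.

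\textbf{Outlook.} With both repairs your decomposition closes. It arguably makes the one-sidedness of the estimate more transparent than the paper's shell argument. The cost is that you must invoke the exact density-evolution identity along $x^\star_\tau$.
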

\begin{proof}
See Section~\ref{proof_lemma14}.
\end{proof}


\begin{lemma} \label{lemma15}
Recall that $q_t$ is the distribution of $X_t$, where $X_t$ in \eqref{eq:relas}. If
$-\log q_1(x_1)\le c_6 d\log T$ where $c_6$ is defined in \eqref{assu_1} and $T\ge C_2d\log^4 T$,
then for all integers $1\le \ell<\tau(x_T)$ where $\tau(x_T)$ is defined in \eqref{eq:def_tau_xt}, one has
\begin{equation}
-\log q_\ell(x_\ell)\le 2c_6 d\log T,
\label{eq:lemma15_conclusion_here}
\end{equation}
provided that $c_6>5c_1$.
\end{lemma}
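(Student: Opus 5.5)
We argue by downward induction on \(\ell\), starting from \(\ell=1\) (the hypothesis) and moving up to \(\ell=\tau(x_T)-1\). The point is that the amount \(-\log q_\ell(x_\ell)\) can grow per step is controlled by a one-step density-ratio estimate. Lemma~\ref{lemma14} provides exactly such an estimate, in the form \(-\log q_{\ell+1}(x_{\ell+1}) \le -\log q_\ell(x_\ell)+\text{small}\).

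The induction would be run in three steps.

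\begin{enumerate}
\item \emph{Inductive hypothesis.} Assume \(-\log q_k(x_k)\le 2c_6 d\log T\) for all \(k\le \ell\), with \(\ell+1<\tau(x_T)\). We must show the same bound for \(q_{\ell+1}(x_{\ell+1})\).

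\item \emph{Verify \eqref{eq:lemma14_condition_here} at step \(t=\ell+1\).} Here \(\theta_t\) involves \(-\log p_{\bar X_{\tau_{t,0}}}(x_{\tau_{t,0}})\). This is not yet known for \(t=\ell+1\), which is the circularity; I would resolve it by a bootstrap (see below). Given \(\theta_t\lesssim c_6\), the first term of the condition is \(\lesssim c_6 d\log^2T/T\le 1/(2C_{10})\), using \(T\ge C_2 d\log^4T\). The second term is controlled by \(\widetilde S_{\tau(x_T)-1}\le c_3\), because \(\widetilde\xi_t\) dominates \(\frac{\sqrt{d\log^3T\log K\sum(\varepsilon_{\rm score})^2}}{T}\) up to factors of \(d\) and \(\log\).

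\item \emph{Apply the second conclusion of Lemma~\ref{lemma14} with \(i=K-1\), \(n=N-1\).} Since \(x_{t-1}=x^{(N)}_{\tau_{t,K-1}}\) and \(\tau_{t,K-1}=1-\bar\alpha_{t-1}\), this yields
\[
-\log q_t(x_t)\le -\log\bigl(\alpha_t^{-d/2}q_{t-1}(x_{t-1})\bigr)+\frac{4c_1 d\log T}{T}+C_{10}\Bigl\{\frac{d^2\theta_t^2K\log^4T}{T^2}+\widetilde\xi_t\text{-type term}\Bigr\}.
\]
The Jacobian factor \(\alpha_t^{-d/2}\) coming from the rescaling is \(\exp(O(d\log T/T))\) by \eqref{eq:23a}. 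I would check the direction of the inequality carefully: the lemma bounds the numerator density by the denominator, and we want a lower bound on \(q_t\). If the direction is reversed, I would use the symmetric version obtained by swapping the roles of the endpoints, as in \cite{li2025faster}.
\end{enumerate}

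Summing the per-step increments over \(t\le\tau(x_T)-1\le T\) gives total growth
\[
\sum_t\Bigl(\frac{5c_1 d\log T}{T}+\frac{C d^2c_6^2K\log^4T}{T^2}\Bigr)+C\,\widetilde S\le 5c_1 d\log T+o(d\log T)+Cc_3 .
\]
Here \(T\ge C_2 d\log^4T\) and \(K\lesssim\log T\) make the second sum \(\lesssim d\log T\cdot c_6^2K\log^3T/T\), which is small. Adding this growth to the initial value \(-\log q_1(x_1)\le c_6 d\log T\) and using \(c_6>5c_1\) gives \(-\log q_\ell(x_\ell)<2c_6 d\log T\). This closes the induction.

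The main obstacle is the circularity in step 2: \(\theta_{\ell+1}\) must be controlled before Lemma~\ref{lemma14} can be applied at step \(\ell+1\). I would handle it with a continuity/bootstrap argument. First apply Lemma~\ref{lemma12} on the exact ODE trajectory from \(x_{\tau_{t,K-1}}^\star\) to \(\tau_{t,0}\). This moves \(-\log p\) from \(\tau_{t,K-1}\) to \(\tau_{t,0}\) within a factor 2, since \(|\tau_{t,0}-\tau_{t,K-1}|\le c_4\tau(1-\tau)\) by \eqref{eq:23b}. Combined with Lemma~\ref{lemma2}(a)-type closeness of \(x_{t-1}\) and \(x_{t-1}^\star\), this gives the a priori bound \(\theta_t\le 4c_6\) (say), which suffices for the condition. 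The sharp constant \(2c_6\) then comes out of the summation above.
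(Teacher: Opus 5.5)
Your skeleton is the argument the paper delegates to Lemma~9 of \cite{li2025faster}: upward induction from $\ell=1$, the second conclusion of Lemma~\ref{lemma14} at $i=K-1$, $n=N-1$ (where the rescaled point is $\sqrt{\alpha_t}\,x_{t-1}$), telescoping, absorbing the $\theta_t^2$ term via $T\ge C_2d\log^4T$ and $K\le c\log T$, absorbing the score term via $\widetilde S_{\tau(x_T)-1}\le c_3$, and closing with $c_6>5c_1$. Your worry about the direction is unnecessary. Since $p_{\sqrt{\alpha_t}X_{t-1}}(\sqrt{\alpha_t}x_{t-1})=\alpha_t^{-d/2}q_{t-1}(x_{t-1})\ge q_{t-1}(x_{t-1})$, Lemma~\ref{lemma14} directly gives $-\log q_t(x_t)\le-\log q_{t-1}(x_{t-1})+R_t$.

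\textbf{The gap.} It sits in the step you yourself flag: the a priori bound on $\theta_t$ that licenses Lemma~\ref{lemma14} at the new step. Your bootstrap is circular.
\begin{enumerate}
\item Lemma~\ref{lemma12}, run from $\tau_{t,K-1}$ to $\tau_{t,0}$, needs typicality of the exact-flow endpoint $x^\star_{t-1}$. The induction hypothesis concerns a different point, the algorithm's output $x_{t-1}=x^{(N)}_{\tau_{t,K-1}}$.
\item Passing from $x_{t-1}$ to $x^\star_{t-1}$ requires $\|x_{t-1}-x^\star_{t-1}\|_2$ to be small (Lemma~\ref{lemma3}) and a Lemma~\ref{lemma2}(a)-type density comparison. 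Both are established only on $E_t$.
\item In the paper, $E_{2,t}$ and $E_{3,t}$ are themselves derived from Lemmas~\ref{lemma12} and~\ref{lemma14} started at $x_t$, that is, from a bound on $\theta_t$. That bound is exactly what the present lemma is supposed to deliver.
\item The Gauss--Seidel contraction behind Lemmas~\ref{lemma3} and~\ref{lemma14} also needs $d\theta_t\log^2T\log K/T\ll1$.
\end{enumerate}
As written, nothing excludes a grossly atypical $x_t$, outside the admissible range of Lemma~\ref{lemma14}, being sent by one outer step to a typical $x_{t-1}$. So the induction does not close.

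\textbf{What is missing.} You need a crude one-step stability estimate valid for arbitrary $x_t$, used only to certify $\theta_t=O(c_6)$ before Lemma~\ref{lemma14} is invoked. One way to get it:
\begin{enumerate}
\item Each iterate satisfies $\sqrt{(1-\tau_{t,0})/(1-\tau_{t,i})}\,x^{(n)}_{\tau_{t,i}}-x_t=\sqrt{1-\tau_{t,0}}\sum_jA^{(t)}_{ji}s_{\tau_{t,j}}(\cdot)$. By \eqref{eq:23j}, \eqref{eq:23f} and Lemma~\ref{lemma11}, its displacement $\Delta$ is $O\bigl(\frac{\log T\log K}{T}\sqrt{\Theta d\tau_{t,0}\log T}\bigr)$, plus a score-error part controlled by $\widetilde\xi_t\le c_3$. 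Here $\Theta$ is the largest $\theta$ among $x_t$ and the previously computed iterates. No contraction is needed for this.
\item Write $q_t(x_t)$ and $p_{\sqrt{\alpha_t}X_{t-1}}(\sqrt{\alpha_t}x_{t-1})$ as Gaussian mixtures over $X_0$, with variances $1-\bar\alpha_t\ge\alpha_t-\bar\alpha_t$. Apply Jensen under the posterior at the typical output, using the first-moment bound of Lemma~\ref{lemma10}. This bounds $\log\bigl(q_{t-1}(x_{t-1})/q_t(x_t)\bigr)$ by $O(d\log T/T)+\Delta^2/(2\tau_{t,0})+C\sqrt{\theta_{t-1}d\log T/\tau_{t,0}}\,\Delta$.
\item The same Jensen argument from $x_t$ to each iterate, taken in Gauss--Seidel order, gives $\Theta\le2\theta_t$.
\item Combining these forces $\theta_t\le2\theta_{t-1}+o(1)$.
\end{enumerate}
These crude increments are of order $d\log^2T\log K/T$, too large to telescope. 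Their only role is to place $x_t$ in the range where Lemma~\ref{lemma14} applies; after that, your sharp summation goes through.
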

\begin{proof}
The proof of Lemma~\ref{lemma15} follows the same argument as that of
Lemma~9 in \cite{li2025faster}, with the only modification being that we invoke
Lemma~\ref{lemma14} in place of the corresponding estimate used there. This
change requires the conditions \(T \ge C_2 d\log^4 T\) and \(K \lesssim \log T\)
to obtain the desired conclusion. We omit the details for brevity.
\end{proof}

\section{Proofs of lemmas in Section \ref{sec:promainresult}}
\label{app:proof_local_approximation}
For $\tau_{t-1,0} \leq \tau<\tau_{t,0}$, we let $x_\tau^\star$ denote the solution of \eqref{ODE} with the initial condition
$x_{\tau_{t,0}}^\star=x_{\tau_{t,0}}$. In particular, with iteration \eqref{eq:gs_update} we obtain
\[
x_{\tau_{t,0}}^{(n)}=x_{\tau_{t,0}}^\star=x_{\tau_{t,0}},
\qquad\text{for all }n\ge 0.
\]
For convenience, we define
$A_{ji}^{(t)}
:=
\frac{1}{2}\gamma_{t,j}\left(\tau_{t,i}\right)
\left(1-\tau_{t,j}\right)^{-3/2}$, where
$\gamma_{t,j}\left(\tau_{t,i}\right)
$ is defined in \eqref{eq:gammtk}.

\subsection{Proof of Lemma~\ref{lemma3}}
\label{lemma3_proof}
\begin{proof}
Fix $0\le i\le K-1$.
By the Gauss--Seidel update \eqref{eq:gs_update},
\[
\frac{x_{\tau_{t,i}}^{(n+1)}}{\sqrt{1-\tau_{t,i}}}
=
\frac{x_{\tau_{t,0}}}{\sqrt{1-\tau_{t,0}}}
+
\sum_{j<i} A_{ji}^{(t)}\, s_{\tau_{t,j}}\!\left(x_{\tau_{t,j}}^{(n+1)}\right)
+
\sum_{j\ge i} A_{ji}^{(t)}\, s_{\tau_{t,j}}\!\left(x_{\tau_{t,j}}^{(n)}\right).
\]
On the other hand, by integrating the rescaled probability flow ODE along the exact path, we obtain
\begin{align*}
\frac{x_{\tau_{t,i}}^\star}{\sqrt{1-\tau_{t,i}}}
=
\frac{x_{\tau_{t,0}}^\star}{\sqrt{1-\tau_{t,0}}}
+
\int_{\tau_{t,i}}^{\tau_{t,0}}
\frac{s_\tau^\star(x_\tau^\star)}{2(1-\tau)^{3/2}}
d\tau .
\end{align*}
Subtracting the above two identities yields
\begin{align}
x_{\tau_{t,i}}^{(n+1)}-x_{\tau_{t,i}}^\star
&=
\underbrace{\sqrt{1-\tau_{t,i}}\Biggl[\sum_{j<i} A_{ji}^{(t)}
\Bigl(
s_{\tau_{t,j}}\!\left(x_{\tau_{t,j}}^{(n+1)}\right)
-
s_{\tau_{t,j}}^\star\!\left(x_{\tau_{t,j}}^\star\right)
\Bigr)
+
\sum_{j\ge i} A_{ji}^{(t)}
\Bigl(
s_{\tau_{t,j}}\!\left(x_{\tau_{t,j}}^{(n)}\right)
-
s_{\tau_{t,j}}^\star\!\left(x_{\tau_{t,j}}^\star\right)
\Bigr)\Biggr]}_{:=G_1}
\nonumber\\
&\quad+\underbrace{\sqrt{1-\tau_{t,i}}\Biggl[\sum_{j=0}^{K-1}A_{ji}^{(t)}
s_{\tau_{t,j}}^\star(x_{\tau_{t,j}}^\star)-
\int_{\tau_{t,i}}^{\tau_{t,0}}
\frac{s_\tau^\star(x_\tau^\star)}{2(1-\tau)^{3/2}}
d\tau \Biggr]}_{:=G_2}.
\label{eq:gs_stage1_main}
\end{align}

\paragraph{Bound for $G_1$.} A simple calculation gives
\begin{align}
G_1
=&\underbrace{\sqrt{1-\tau_{t,i}}\Biggl[\sum_{j<i} A_{ji}^{(t)}
\Bigl(
s_{\tau_{t,j}}\!\left(x_{\tau_{t,j}}^{(n+1)}\right)
- s_{\tau_{t,j}}^\star\!\left(x_{\tau_{t,j}}^{(n+1)}\right)
\Bigr)
+ \sum_{j\ge i} A_{ji}^{(t)}
\Bigl(
s_{\tau_{t,j}}\!\left(x_{\tau_{t,j}}^{(n)}\right)
- s_{\tau_{t,j}}^\star\!\left(x_{\tau_{t,j}}^{(n)}\right)
\Bigr)\Biggr]}_{:=G_{11}}\nonumber\\
+&\underbrace{\sqrt{1-\tau_{t,i}}\Biggl[\sum_{j<i} A_{ji}^{(t)}
\Bigl(
s_{\tau_{t,j}}^\star\!\left(x_{\tau_{t,j}}^{(n+1)}\right)
-
s_{\tau_{t,j}}^\star\!\left(x_{\tau_{t,j}}^\star\right)
\Bigr)
+
\sum_{j\ge i} A_{ji}^{(t)}
\Bigl(
s_{\tau_{t,j}}^\star\!\left(x_{\tau_{t,j}}^{(n)}\right)
-
s_{\tau_{t,j}}^\star\!\left(x_{\tau_{t,j}}^\star\right)
\Bigr)\Biggr]}_{:=G_{12}}.
\label{80}
\end{align}
For the term $G_{11}$, the triangle inequality and Cauchy--Schwarz yield
\begin{eqnarray*}
\|G_{11}\|_2^2 &\le & 
(1-\tau_{t,i}) \Bigl(\sum_{j=0}^{K-1}|A_{ji}^{(t)}|^2\Bigr) \Biggl[ \sum_{j<i} \Bigl(\varepsilon_{\mathrm{score},t,j}^{(n+1)}(x_{\tau_{t,j}}^{(n+1)})\Bigr)^2 + \sum_{j\ge i} \Bigl(\varepsilon_{\mathrm{score},t,j}^{(n)}(x_{\tau_{t,j}}^{(n)})\Bigr)^2\Biggr] \\
&\lesssim & 
\log K\cdot \frac{(\tau_{t,0}-\tau_{t,i})^2}{(1-\tau_{t,0})^2}
\Biggl[\sum_{j=0}^{K-1}
\Bigl(\varepsilon_{\mathrm{score},t,j}^{(n)}(x_{\tau_{t,j}}^{(n)})\Bigr)^2+\sum_{j=0}^{K-1}
\Bigl(\varepsilon_{\mathrm{score},t,j}^{(n+1)}(x_{\tau_{t,j}}^{(n+1)})\Bigr)^2\Biggr] \\
&\lesssim &
\log K\,\tau_{t,i}^2\frac{\log^2T}{T^2}
\Biggl[\sum_{j=0}^{K-1}
\Bigl(\varepsilon_{\mathrm{score},t,j}^{(n)}(x_{\tau_{t,j}}^{(n)})\Bigr)^2+\sum_{j=0}^{K-1}
\Bigl(\varepsilon_{\mathrm{score},t,j}^{(n+1)}(x_{\tau_{t,j}}^{(n+1)})\Bigr)^2\Biggr],
\label{eq:gs_stage1_score}
\end{eqnarray*}
where the second inequality follows from \eqref{eq:23k} and the fact $1-\tau_{t,i}\asymp1-\tau_{t,0}$, the last inequality comes from \eqref{eq:23f} that by setting $i_1=0$, $i_2=i$, $i_3=i$, and $i_4=0$.

For the term $G_{12}$,  on the event $E_t$, it holds
\begin{align*}
   \|G_{12}\|_2^2
\le&2 \left(1-\tau_{t,i}\right)\left(\left\|\sum_{j<i} A_{ji}^{(t)}\left(s_{\tau_{t, j}}^{\star}\left(x_{\tau_{t, j}}^{(n+1)}\right)-s_{\tau_{t, j}}^{\star}\left(x_{\tau_{t, j}}^{\star}\right)\right)\right\|_2^2+\left\|\sum_{j \geq i} A_{ji}^{(t)}\left(s_{\tau_{t, j}}^{\star}\left(x_{\tau_{t, j}}^{(n)}\right)-s_{\tau_{t, j}}^{\star}\left(x_{\tau_{t, j}}^{\star}\right)\right)\right\|_2^2\right)\nonumber\\
\lesssim& (1-\tau_{t,i})\left(\sum_{j=0}^{K-1}\left|A_{ji}^{(t)}\right|\right)^2\left(\frac{d\log T}{\tau_{t,0}}\right)^2\left(\max _{0 \leq \ell \leq i-1}\left\|x_{\tau_{t, \ell}}^{(n+1)}-x_{\tau_{t, \ell}}^{\star}\right\|_2^2+\max _{0 \leq \ell \leq K-1}\left\|x_{\tau_{t, \ell}}^{(n)}-x_{\tau_{t, \ell}}^{\star}\right\|_2^2\right)\nonumber\\
\lesssim& \log^2K\left(\frac{d\log^2T}{T}\right)^2\left(\max _{0 \leq \ell \leq i-1}\left\|x_{\tau_{t, \ell}}^{(n+1)}-x_{\tau_{t, \ell}}^{\star}\right\|_2^2+\max _{0 \leq \ell \leq K-1}\left\|x_{\tau_{t, \ell}}^{(n)}-x_{\tau_{t, \ell}}^{\star}\right\|_2^2\right),
\end{align*}
where the third line follows from the typical region that $-\log p_{\bar{X}_{\tau_{t,j}}}\left(\lambda x_{\tau_{t,j}}^{(n)}+(1-\lambda) x_{\tau_{t,j}}^{\star}\right) \leq C_4 d \log T$, for all $j, n$ together with Lemma~\ref{lemma11}, namely $\left\|s_{\tau_{t, j}}^{\star}\left(x_{\tau_{t, j}}^{(n)}\right)-s_{\tau_{t, j}}^{\star}\left(x_{\tau_{t, j}}^{\star}\right)\right\|_2 \leq C \frac{d \log T}{\tau_{t, j}}\left\|x_{\tau_{t, j}}^{(n)}-x_{\tau_{t, j}}^{\star}\right\|_2$, and the last line holds by $\sum_{j=0}^{K-1}\left|A_{j i}^{(t)}\right| \leq C_4 \frac{\left(\tau_{t, 0}-\tau_{t,i}\right) \log K}{\left(1-\tau_{t,0}\right)^{3 / 2}}$ in Lemma~\ref{lemma9} and $\tau_{t,0} \asymp \tau_{t,i}$.

\paragraph{Bound for $G_2$.}
For any \(\tau_{t,i}\), the Lagrange remainder  bound gives 
\begin{equation} \label{eq:exact_flow_integral_cheb}
\norm{G_2}^2 \le \xkh{1-\tau_{t,i}} \cdot \left(
\int_{\tau_{t,i}}^{\tau_{t,0}}
\frac{1}{K!}
\sup_{\tau_{t,i}\le \tau\le \tau_{t,0}}
\left\|
\frac{\partial^K}{\partial \tau^K}
\frac{s_\tau^\star(x_\tau^\star)}
{2(1-\tau)^{3/2}}
\right\|_2
\left|
\prod_{j=0}^{K-1}(\tau-\tau_{t,j})
\right|
\,d\tau
\right)^2.
\end{equation}
For all $\tau_{t,i}\le \tau\le \tau_{t,0}$, according to Lemma \ref{lemma13}, it holds
\[
\sup_{\tau_{t,i}\le \tau\le \tau_{t,0}}
\left\|
\frac{\partial^K}{\partial \tau^K}
\frac{s_\tau^\star(x_\tau^\star)}
{2(1-\tau)^{3/2}}
\right\|_2
\le
C_s(MK)^K K!
\sqrt{
\frac{d\theta\log T}{\tau_{t,i}(1-\tau_{t,i})^3}
}
\left(
\frac{d\theta\log T}{\tau_{t,i}(1-\tau_{t,i})}
\right)^K .
\]
Next, set 
\[
\tau
=
\frac{\tau_{t,0}+\tau_{t,K-1}}{2}
+
\frac{\tau_{t,0}-\tau_{t,K-1}}{2}z,
\qquad z\in[-1,1],
\]
then 
\[
\prod_{j=0}^{K-1}(\tau-\tau_{t,j})
=
\left(
\frac{\tau_{t,0}-\tau_{t,K-1}}{2}
\right)^K
\prod_{j=0}^{K-1}(z-z_j),
\]
where 
\[
z_j=\cos\left(\frac{j\pi}{K-1}\right),
\qquad 0\le j\le K-1.
\]
Using the fact that 
\[
\prod_{j=0}^{K-1}(z-z_j)= 2^{2-K}(z^2-1)U_{K-2}(z) \quad \mbox{and} \quad \sup_{z\in[-1,1]}
\left|
(z^2-1)U_{K-2}(z)
\right|
\le 1,
\]
where \(U_{K-2}\) is the Chebyshev polynomial of the second kind, we obtain
\begin{equation} \label{eq:tauminj}
\sup_{\tau_{t,K-1}\le \tau\le \tau_{t,0}}
\left|
\prod_{j=0}^{K-1}(\tau-\tau_{t,j})
\right|
\le
4^{1-K}
(\tau_{t,0}-\tau_{t,K-1})^K .
\end{equation}
Substituting the preceding two estimates into
\eqref{eq:exact_flow_integral_cheb}  yields
\begin{eqnarray*}
\norm{G_2}^2 &\lesssim &  (\tau_{t,0}-\tau_{t,i})^2 \frac{d\theta\log T}{\tau_{t,i}(1-\tau_{t,i})^2} \left(
\frac{MKd\theta(\tau_{t,0}-\tau_{t,K-1})\log T}{4\tau_{t,i}(1-\tau_{t,i})} \right)^{2K}\\
& \lesssim& \frac{d\theta\,\tau_{t,i}\log^3 T}{T^2}
\left(
\frac{c_1MK d\theta\log^2 T}{4T}
\right)^{2K} \\
&\lesssim & \frac{d\tau_{t,i}\log^3 T}{T^2}
\left( \frac{M_0K d\log^2 T}{T}
\right)^{2K},
\end{eqnarray*}
where the second inequality comes from the fact that 
$\tau_{t,0}-\tau_{t,i}
\le
c_1\tau_{t,i}(1-\tau_{t,i})\frac{\log T}{T} $ and 
$ \tau_{t,0}-\tau_{t,K-1}
\le c_1\tau_{t,i}(1-\tau_{t,i})\frac{\log T}{T}$, and the last inequality follows from the typical condition that  \(\theta\le C_\theta\) for an absolute constant \(C_\theta>0\) and  $M_0\ge \frac{c_1 M C_\theta}{4}$.

Putting the estimates of $G_{11}$, $G_{12}$,  and $G_{2}$ into \eqref{eq:gs_stage1_main}, with $\tau_{t,0}\ge \tau_{t,i}, 0\le i \le K-1$ we arrive at
\begin{align}
\|x_{\tau_{t,i}}^{(n+1)}-x_{\tau_{t,i}}^\star\|_2^2
&\le
\bar C\log^2 K\left(\frac{d\log^2T}{T}\right)^2
\Biggl[
\max_{0\le \ell\le i-1}\|x_{\tau_{t,\ell}}^{(n+1)}-x_{\tau_{t,\ell}}^\star\|_2^2
+
\max_{0\le \ell\le K-1}\|x_{\tau_{t,\ell}}^{(n)}-x_{\tau_{t,\ell}}^\star\|_2^2
\Biggr]
\nonumber\\
&\quad+
\bar C\log K\,\tau_{t,0}^2\frac{\log^2T}{T^2}
\left[\sum_{j=0}^{K-1}\left(\varepsilon_{\text {score }, t, j}^{(n)}\left(x_{\tau_{t, j}}^{(n)}\right)\right)^2+\sum_{j=0}^{K-1}\left(\varepsilon_{\text {score }, t, j}^{(n+1)}\left(x_{\tau_{t, j}}^{(n+1)}\right)\right)^2\right]
\nonumber\\
&\quad+
\bar C\frac{d\tau_{t,0} \log^3 T}{T^2}
\left(\frac{M_0Kd\log^2 T}{T}\right)^{2K}
\label{eq:gs_stage1_recursion}
\end{align}
for any fixed \(0\le i\le K-1\).  Here, $\bar C$ is an universal constant. Note that \(K\le c\log T\) and  \(T\ge C_2 d\log^4T\) for a
sufficiently large constant  \(C_2>0\). Therefore,  it holds
\[
\bar C\log ^2 K\left(\frac{d \log ^2 T}{T}\right)^2 \le \frac{1}{4}.
\]
By induction, one can easily check that for every \(0\le i\le K-1\),
\begin{align}
\|x_{\tau_{t,i}}^{(n+1)}-x_{\tau_{t,i}}^\star\|_2^2
&\le
\frac{1}{3}
\max_{0\le \ell\le K-1}
\|x_{\tau_{t,\ell}}^{(n)}-x_{\tau_{t,\ell}}^\star\|_2^2
+
\frac{4\bar C}{3}
\log K\,\tau_{t,0}^2\frac{\log^2T}{T^2}
\sum_{m=n}^{n+1}\sum_{j=0}^{K-1}
\Bigl(\varepsilon_{\mathrm{score},t,j}^{(m)}(x_{\tau_{t,j}}^{(m)})\Bigr)^2
\nonumber\\
&\quad+
 \frac{4\bar C}{3}\frac{d\tau_{t,0} \log^3 T}{T^2}
\left(\frac{M_0Kd\log^2 T}{T}\right)^{2K}.
\label{eq:gs_stage1_induction}
\end{align}
Indeed, for  \(i=0\), \eqref{eq:gs_stage1_induction} holds immediate since
\(x_{\tau_{t,0}}^{(n+1)}=x_{\tau_{t,0}}^\star=x_{\tau_{t,0}}\).  Next,  assume that \eqref{eq:gs_stage1_induction} holds for all
\(0\le i\le k_0\), where \(0\le k_0\le K-2\).  Using the hypothesis conditions, one can see that \eqref{eq:gs_stage1_induction} holds for \(i=k_0+1\). With \eqref{eq:gs_stage1_induction}  in place, we obtain 
\begin{align}
\max_{0\le r\le K-1}\|x_{\tau_{t,r}}^{(N)}-x_{\tau_{t,r}}^\star\|_2^2
&\le
3^{-N}
\max_{0\le r\le K-1}\|x_{\tau_{t,r}}^{(0)}-x_{\tau_{t,r}}^\star\|_2^2
+
C\log K\,\tau_{t,0}^2\frac{\log^2T}{T^2}
\sum_{m=0}^{N}\sum_{j=0}^{K-1}
\Bigl(\varepsilon_{\mathrm{score},t,j}^{(m)}(x_{\tau_{t,j}}^{(m)})\Bigr)^2
\nonumber\\
&\quad+
C \frac{d\tau_{t,0} \log^3 T}{T^2}
\left(\frac{M_0Kd\log^2 T}{T}\right)^{2K}.
\label{eq:gs_stage1_after_iter}
\end{align}
Here, $C=3\bar{C}$. Finally, as shown in  \cite{li2025faster}, one has
\[
\|x_{\tau_{t,i}}^\star-x_{\tau_{t,i}}^{(0)}\|_2^2
\le \left(\bar{\alpha}_{t-1} \sqrt{1-\alpha_t} \sqrt{\frac{1-\alpha_t}{1-\bar{\alpha}_{t-1}}} \sqrt{d \log T}\right)^2
\le
\frac{4c_1^2d\log^3T}{T^2},
\qquad 0\le i\le K-1.
\]
Since $T\ge C_2d\log^4T$, $K\le c\log T$, and $N\ge C_3K\log T$ for a sufficiently large constant $C_3>0$ , it holds
\[
3^{-N}
\max_{0\le r\le K-1}\|x_{\tau_{t,r}}^{(0)}-x_{\tau_{t,r}}^\star\|_2^2
\le
C \frac{d\tau_{t,0} \log^3T}{T^2}
\left(\frac{M_0Kd\log^2 T}{T}\right)^{2K}.
\]
Substituting this into \eqref{eq:gs_stage1_after_iter} yields
\[
\max_{0\le r\le K-1}\|x_{\tau_{t,r}}^{(N)}-x_{\tau_{t,r}}^\star\|_2^2
\le
C \log K\,\tau_{t,0}^2 \frac{\log ^2 T}{T^2}
\sum_{m=0}^{N}\sum_{j=0}^{K-1}
\Bigl(\varepsilon_{\mathrm{score}, t, j}^{(m)}(x_{\tau_{t,j}}^{(m)})\Bigr)^2
+
C \frac{d \tau_{t,0} \log ^3 T}{T^2}
\left(\frac{M_0Kd \log ^2 T}{T}\right)^{2K}.
\]
This completes the proof.
\end{proof}

\subsection{Proof of Lemma~\ref{lemma4}}
\begin{proof}
Recall that 
\[ J_{\tau_{t,i}}^{(n)} :=
\frac{\partial (x_{\tau_{t,i}}^{(n)}/\sqrt{1-\tau_{t,i}})}
{\partial (x_{\tau_{t,0}}/\sqrt{1-\tau_{t,0}})}, \qquad 
J_{\tau_{t,i}}^\star
:=
\frac{\partial (x_{\tau_{t,i}}^\star/\sqrt{1-\tau_{t,i}})}
{\partial (x_{\tau_{t,0}}^\star/\sqrt{1-\tau_{t,0}})}.
\]
It then follows from \eqref{eq:gs_update} that 
\begin{align}
J_{\tau_{t,i}}^{(n+1)}
=
I
&+
\sum_{j<i}
A_{ji}^{(t)}
\frac{\partial s_{\tau_{t,j}}\!\left(x_{\tau_{t,j}}^{(n+1)}\right)}
{\partial \left(x_{\tau_{t,j}}^{(n+1)}/\sqrt{1-\tau_{t,j}}\right)}
J_{\tau_{t,j}}^{(n+1)}
+
\sum_{j\ge i}
A_{ji}^{(t)}
\frac{\partial s_{\tau_{t,j}}\!\left(x_{\tau_{t,j}}^{(n)}\right)}
{\partial \left(x_{\tau_{t,j}}^{(n)}/\sqrt{1-\tau_{t,j}}\right)}
J_{\tau_{t,j}}^{(n)} .
\label{eq:gs_jacobian_update}
\end{align}
Similarly, note that the exact rescaled probability flow ODE is
\[
\frac{x_{\tau_{t,i}}^\star}{\sqrt{1-\tau_{t,i}}}
=
\frac{x_{\tau_{t,0}}^\star}{\sqrt{1-\tau_{t,0}}}
-
\int_{\tau_{t,0}}^{\tau_{t,i}}
\frac{s_\tau^\star(x_\tau^\star)}
{2(1-\tau)^{3/2}}
\,d\tau .
\]
Therefore,
\begin{equation}
J_{\tau_{t,i}}^\star
=
I
-
\int_{\tau_{t,0}}^{\tau_{t,i}}
\frac{1}{2(1-\tau)^{3/2}}
\frac{\partial s_\tau^\star(x_\tau^\star)}
{\partial \left(x_\tau^\star/\sqrt{1-\tau}\right)}
J_\tau^\star
\,d\tau .
\label{eq:exact_jacobian_flow}
\end{equation}
Combining the two previous equations, we can obtain
\begin{eqnarray*}
&& J_{\tau_{t,i}}^{(n+1)}-J_{\tau_{t,i}}^\star \\
&= & 
\sum_{j<i} A_{ji}^{(t)}
\Biggl[
\frac{\partial s_{\tau_{t,j}}(x_{\tau_{t,j}}^{(n+1)})}
{\partial (x_{\tau_{t,j}}^{(n+1)}/\sqrt{1-\tau_{t,j}})}
J_{\tau_{t,j}}^{(n+1)}
-
\frac{\partial s_{\tau_{t,j}}^\star(x_{\tau_{t,j}}^\star)}
{\partial (x_{\tau_{t,j}}^\star/\sqrt{1-\tau_{t,j}})}
J_{\tau_{t,j}}^\star
\Biggr]
\nonumber\\
&& +
\sum_{j\ge i} A_{ji}^{(t)}
\Biggl[
\frac{\partial s_{\tau_{t,j}}(x_{\tau_{t,j}}^{(n)})}
{\partial (x_{\tau_{t,j}}^{(n)}/\sqrt{1-\tau_{t,j}})}
J_{\tau_{t,j}}^{(n)}
-
\frac{\partial s_{\tau_{t,j}}^\star(x_{\tau_{t,j}}^\star)}
{\partial (x_{\tau_{t,j}}^\star/\sqrt{1-\tau_{t,j}})}
J_{\tau_{t,j}}^\star
\Biggr]
\nonumber\\
&& +
\int_{\tau_{t,0}}^{\tau_{t,i}}
\Biggl[
\frac{1}{2(1-\tau)^{3/2}}
\frac{\partial s_\tau^\star(x_\tau^\star)}
{\partial (x_\tau^\star/\sqrt{1-\tau})}
J_\tau^\star
-
\sum_{j=0}^{K-1}\psi_{t,j}(\tau)\,
\frac{1}{2(1-\tau_{t,j})^{3/2}}
\frac{\partial s_{\tau_{t,j}}^\star(x_{\tau_{t,j}}^\star)}
{\partial (x_{\tau_{t,j}}^\star/\sqrt{1-\tau_{t,j}})}
J_{\tau_{t,j}}^\star
\Biggr]d\tau \nonumber \\
&=& \underbrace{\sum_{j<i} A_{ji}^{(t)}
\Biggl(
\frac{\partial s_{\tau_{t,j}}(x_{\tau_{t,j}}^{(n+1)})}
{\partial (x_{\tau_{t,j}}^{(n+1)}/\sqrt{1-\tau_{t,j}})}
-
\frac{\partial s_{\tau_{t,j}}^\star(x_{\tau_{t,j}}^\star)}
{\partial (x_{\tau_{t,j}}^\star/\sqrt{1-\tau_{t,j}})}
\Biggr)
J_{\tau_{t,j}}^{(n+1)}}_{H_1}\\
&& +\underbrace{\sum_{j<i} A_{ji}^{(t)} \frac{\partial s_{\tau_{t, j}}^{\star}\left(x_{\tau_{t, j}}^{\star}\right)}{\partial\left(x_{\tau_{t, j}}^{\star} / \sqrt{1-\tau_{t, j}}\right)}\left(J_{\tau_{t, j}}^{(n+1)}-J_{\tau_{t, j}}^{\star}\right)}_{H_2} \\
&& +\underbrace{\sum_{j \geq i} A_{ji}^{(t)}\left(\frac{\partial s_{\tau_{t, j}}\left(x_{\tau_{t, j}}^{(n)}\right)}{\partial\left(x_{\tau_{t, j}}^{(n)} / \sqrt{1-\tau_{t, j}}\right)}-\frac{\partial s_{\tau_{t, j}}^{\star}\left(x_{\tau_{t, j}}^{\star}\right)}{\partial\left(x_{\tau_{t, j}}^{\star} / \sqrt{1-\tau_{t, j}}\right)}\right) J_{\tau_{t, j}}^{(n)}}_{H_3}
\end{eqnarray*}
\begin{eqnarray*}
&&+\underbrace{\sum_{j \geq i} A_{ji}^{(t)} \frac{\partial s_{\tau_{t, j}}^{\star}\left(x_{\tau_{t, j}}^{\star}\right)}{\partial\left(x_{\tau_{t, j}}^{\star} / \sqrt{1-\tau_{t, j}}\right)}\left(J_{\tau_{t, j}}^{(n)}-J_{\tau_{t, j}}^{\star}\right)}_{H_4}\nonumber \\
&& + \underbrace{\int_{\tau_{t,0}}^{\tau_{t,i}}
\Biggl[
\frac{1}{2(1-\tau)^{3/2}}
\frac{\partial s_\tau^\star(x_\tau^\star)}
{\partial (x_\tau^\star/\sqrt{1-\tau})}
J_\tau^\star
- \sum_{j=0}^{K-1}\psi_{t,j}(\tau)\,
\frac{1}{2(1-\tau_{t,j})^{3/2}}
\frac{\partial s_{\tau_{t,j}}^\star(x_{\tau_{t,j}}^\star)}
{\partial (x_{\tau_{t,j}}^\star/\sqrt{1-\tau_{t,j}})}
J_{\tau_{t,j}}^\star
\Biggr]d\tau}_{H_5}.
\end{eqnarray*}
To begin with, we claim that on the event $E_t$, for all $0 \le n \le N$ and $0 \le j \le K-1$, it holds
\begin{equation}
    \|J_{\tau_{t,j}}^{(n)}\|\le \widetilde{C}
    \label{claim1}
\end{equation}
for some universal constant $\widetilde{C}>0$.

\paragraph{Bound for $H_1$ and $H_3$.}
Observe that 
\begin{align*}
H_1&=\underbrace{\sum_{j<i} A_{j i}^{(t)}\left(\frac{\partial s_{\tau_{t, j}}\left(x_{\tau_{t, j}}^{(n+1)}\right)}{\partial\left(x_{\tau_{t, j}}^{(n+1)} / \sqrt{1-\tau_{t, j}}\right)}-\frac{\partial s_{\tau_{t, j}}^{\star}\left(x_{\tau_{t, j}}^{(n+1)}\right)}{\partial\left(x_{\tau_{t, j}}^{(n+1)} / \sqrt{1-\tau_{t, j}}\right)}\right) J_{\tau_{t, j}}^{(n+1)}}_{H_{11}}\\
&+ \underbrace{\sum_{j<i} A_{j i}^{(t)}\Biggl(
\frac{\partial s_{\tau_{t,j}}^\star(x_{\tau_{t,j}}^{(n+1)})}
{\partial (x_{\tau_{t,j}}^{(n+1)}/\sqrt{1-\tau_{t,j}})}
- \frac{\partial s_{\tau_{t,j}}^\star(x_{\tau_{t,j}}^\star)}
{\partial (x_{\tau_{t,j}}^\star/\sqrt{1-\tau_{t,j}})}
\Biggr) J_{\tau_{t,j}}^{(n+1)}}_{H_{12}},
\end{align*}
Note that  from \eqref{claim1} that $\|J_{\tau_{t,j}}^{(n)}\|\le \widetilde{C}$. This gives
\begin{align*}
\|H_{11}\|^2&\lesssim \left(\sum_{j=0}^{K-1}\left|A_{j i}^{(t)}\right|^2\right)(1-\tau_{t,0})\left[\sum_{j<i}\left(\varepsilon_{\text {Jacobi }, t, j}^{(n+1)}\left(x_{\tau_{t, j}}^{(n+1)}\right)\right)^2+\sum_{j \geq i}\left(\varepsilon_{\text {Jacobi }, t, j}^{(n)}\left(x_{\tau_{t, j}}^{(n)}\right)\right)^2\right]\nonumber\\
&\le
C \log K\tau_{t,i}^2\frac{\log^2T}{T^2}
\sum_{m=n}^{n+1}\sum_{\ell=0}^{K-1}
\left(\varepsilon_{\mathrm{Jacobi},t,\ell}^{(m)}(x_{\tau_{t,\ell}}^{(m)})\right)^2,
\end{align*}
where the first inequality follows from the Cauchy-Schwarz inequality and the second inequality follows by \eqref{eq:23k} in Lemma~\ref{lemma9}.  For the term $H_{12}$,  since $E_t$ holds,  Lemma~\ref{lemma11} yields
\[
\left\| \frac{\partial s_{\tau_{t,j}}^\star(x)}
{\partial (x/\sqrt{1-\tau_{t,j}})}
-\frac{\partial s_{\tau_{t,j}}^\star(y)}
{\partial (y/\sqrt{1-\tau_{t,j}})}
\right\| \le
C\sqrt{1-\tau_{t,j}}\sqrt{\frac{d^3\log^3T}{\tau_{t,j}^3}}\,
\|x-y\|.
\]
Therefore,  we have
\begin{align*}
\|H_{12}\|^2 &\lesssim
\left(\sum_{j=0}^{K-1}|A_{ji}^{(t)}|^2\right) 
\sum_{j<i}
(1-\tau_{t,j})\frac{d^3\log^3T}{\tau_{t,j}^3}
\|x_{\tau_{t,j}}^{(n+1)}-x_{\tau_{t,j}}^\star\|_2^2
\nonumber\\
&\lesssim
\log K\,
\frac{\log^2T}{T^2}
\frac{d^3\log^3T}{\tau_{t,K-1}}
\sum_{j<i}\|x_{\tau_{t,j}}^{(n+1)}-x_{\tau_{t,j}}^\star\|_2^2,
\end{align*}
Applying Lemma \ref{lemma3}, one has
\begin{align*}
\|H_{12}\|^2  &\le
C'\frac{d^3 \log^5 T K\log K}{T^2\tau_{t,K-1}}
\left(
\frac{1}{3^{n}} 4 c_1^2 \frac{d \log ^3 T}{T^2}
+
C\log K \tau_{t, 0}^2 \frac{\log ^2 T}{T^2}
\sum_{m=0}^{n+1} \sum_{j=0}^{K-1}
\left(\varepsilon_{\mathrm{score}, t, j}^{(m)}
\left(x_{\tau_{t, j}}^{(m)}\right)\right)^2
\right.
\nonumber\\
&\qquad\left.
+
C \frac{d \tau_{t, 0} \log ^3 T}{T^2}
\left(\frac{M_0 K d \log ^2 T}{T}\right)^{2 K}
\right).
\end{align*}
Combining the bounds for $H_{11}$ and $H_{12}$, we get an upper bound for $H_1$.  A similar bound for $H_3$ can be given, and we omit it.

\paragraph{Bound for $H_2$ and $H_4$.}
Again by Lemma~\ref{lemma11} and the fact that  $\tau_{t,j}\asymp\tau_{t,i}$, one has
\[
\left\|
\frac{\partial s_{\tau_{t,j}}^\star(x_{\tau_{t,j}}^\star)}
{\partial (x_{\tau_{t,j}}^\star/\sqrt{1-\tau_{t,j}})}
\right\| 
\lesssim
\sqrt{1-\tau_{t,i}}\frac{d\log T}{\tau_{t,i}}.
\]
Therefore, 
\begin{align*}
\|H_2\|^2
&\le C'
\left(\sum_{j<i}|A_{ji}^{(t)}|^2\right)
\sum_{j<i}
(1-\tau_{t,j})\frac{d^2\log^2T}{\tau_{t,j}^2}
\|J_{\tau_{t,j}}^{(n+1)}-J_{\tau_{t,j}}^\star\|^2
\nonumber\\
&\le
C' \log K\left(\frac{d\log^2T}{T}\right)^2
K\max _{0\le j\le i-1}\|J_{\tau_{t,j}}^{(n+1)}-J_{\tau_{t,j}}^\star\|^2,
\end{align*}
Similarly, we have
\begin{align*}
\|H_4\|^2
&\le
C' \log K\left(\frac{d\log^2T}{T}\right)^2
K \max _{0\le j\le K-1}\|J_{\tau_{t,j}}^{(n)}-J_{\tau_{t,j}}^\star\|^2.
\end{align*}

\paragraph{Bound for $H_5$.}
According to the Lagrange interpolation, one has
\begin{eqnarray*}
\|H_5\|^2 &\le & \left(\int_{\tau_{t, i}}^{\tau_{t, 0}} \frac{1}{K!} \sup _{\tau_{t, i} \leq \tau \leq \tau_{t, 0}}\left\|\frac{\partial^K}{\partial \tau^K} \frac{\partial s_\tau^\star(x_\tau^\star)}
{\partial (x_\tau^\star/\sqrt{1-\tau})}J_\tau^\star\right\|\left|\prod_{j=0}^{K-1}\left(\tau-\tau_{t, j}\right)\right| d \tau\right)^2 \\
&\lesssim & \left(\frac{M_0K d\log^2T}{T}\right)^{2K+2},
\end{eqnarray*}
where the second inequality comes from \eqref{eq:tauminj}, the fact that  $\tau\asymp\tau_{t,i}$ and $1-\tau\asymp 1-\tau_{t,i}$, and Lemma \ref{lemma13} that 
\[
\sup_{\tau_{t,K-1}\le \tau\le \tau_{t,0}}
\left\|
\frac{\partial^K}{\partial \tau^K}
\left[
\frac{1}{2(1-\tau)^{3/2}}
\frac{\partial s_\tau^\star(x_\tau^\star)}
{\partial (x_\tau^\star/\sqrt{1-\tau})}
J_\tau^\star
\right]
\right\|
\le
C_{\mathrm J}(MK)^{K+1} K!
\left(\frac{d\theta\log T}{\tau_{t,i}(1-\tau_{t,i})}\right)^{K+1}.
\]

\paragraph{Collect all the bounds.}
Combining the estimates for $H_1, H_2, H_3, H_4$ and $H_5$ with $\tau_{t,0}\ge \tau_{t,i}, 0\le i \le K-1$, we obtain that for any fixed $0\le i\le K-1$, it holds
\begin{align*}
\|J_{\tau_{t,i}}^{(n+1)}-J_{\tau_{t,i}}^\star\|^2
&\le
C'K\log K\left(\frac{d\log^2T}{T}\right)^2
\Biggl[
\max _{0\le j \le i-1}\|J_{\tau_{t,j}}^{(n+1)}-J_{\tau_{t,j}}^\star\|^2
+
\max _{0\le j \le K-1}\|J_{\tau_{t,j}}^{(n)}-J_{\tau_{t,j}}^\star\|^2
\Biggr]
\nonumber\\
&\quad+
C'\frac{d^3 \log^5 T K\log K}{T^2\tau_{t,K-1}}
\left(
\frac{1}{3^{n}} 4 c_1^2 \frac{d \log ^3 T}{T^2}
+
C\log K \tau_{t, 0}^2 \frac{\log ^2 T}{T^2}
\sum_{m=0}^{n+1} \sum_{j=0}^{K-1}
\left(\varepsilon_{\mathrm{score}, t, j}^{(m)}
\left(x_{\tau_{t, j}}^{(m)}\right)\right)^2
\right.
\nonumber\\
&\qquad\left.
+
C \frac{d \tau_{t, 0} \log ^3 T}{T^2}
\left(\frac{M_0 K d \log ^2 T}{T}\right)^{2 K}
\right)
+C' \left(\frac{M_0K d\log^2T}{T}\right)^{2K+2}
\nonumber\\
&\quad+
C' \log K\tau_{t,0}^2\frac{\log^2T}{T^2}
\Biggl[\sum_{\ell=0}^{K-1}
\left(\varepsilon_{\mathrm{Jacobi},t,\ell}^{(n)}(x_{\tau_{t,\ell}}^{(n)})\right)^2+\sum_{\ell=0}^{K-1}
\left(\varepsilon_{\mathrm{Jacobi},t,\ell}^{(n+1)}(x_{\tau_{t,\ell}}^{(n+1)})\right)^2\Biggr].
\end{align*}
By the same arguments to \eqref{eq:gs_stage1_induction} and \eqref{eq:gs_stage1_after_iter}, we obtain
\begin{align}
\max_{0\le i\le K-1}
\|J_{\tau_{t,i}}^{(N)}-J_{\tau_{t,i}}^\star\|^2 &\le
7^{-N}
\max_{0\le i\le K-1}
\|J_{\tau_{t,i}}^{(0)}-J_{\tau_{t,i}}^\star\|^2
+
C
\frac{d^3\tau_{t,0}\log^7T\log^2K}{T^4}
\sum_{m=0}^{N}\sum_{j=0}^{K-1}
\left(\varepsilon_{\mathrm{score},t,j}^{(m)}
\left(x_{\tau_{t,j}}^{(m)}\right)\right)^2
\nonumber\\
+
2C\log K&\,\tau_{t,0}^2\frac{\log^2T}{T^2}
\sum_{m=0}^{N}\sum_{j=0}^{K-1}
\left(\varepsilon_{\mathrm{Jacobi},t,j}^{(m)}
\left(x_{\tau_{t,j}}^{(m)}\right)\right)^2
+
C\left(\frac{M_0K d\log^2T}{T}\right)^{2K+2}.
\label{eq:gs_stage2_iter_expanded}
\end{align}
We claim that 
\begin{equation}
    \max _{0 \leq j \leq K-1}\left\|J_{\tau_{t, j}}^{(0)}-J_{\tau_{t, j}}^{\star}\right\|^2 \leq C\left(\frac{dK \log ^2 T}{T}\right)^2.
    \label{initialization}
\end{equation}
Note that \(T\ge C_2d\log^4T\), \(K\le c\log T\) and \(N\ge C_3K\log T\) for some sufficiently large constant \(C_3>0\). 
Combining \eqref{eq:gs_stage2_iter_expanded} and   \eqref{initialization} gives
\begin{align}
    \|J_{\tau_{t,i}}^{(N)}-J_{\tau_{t,i}}^\star\|^2
&\le
C K\log^2 K\,\frac{d^3\log^7 T}{T^4}
\sum_{n=0}^{N}\sum_{j=0}^{K-1}
\Bigl(\varepsilon_{\mathrm{score},t,j}^{(n)}(x_{\tau_{t,j}}^{(n)})\Bigr)^2
\nonumber\\
&\quad+
C \log K \tau_{t,0}^2 \frac{\log^2 T}{T^2}
\sum_{n=0}^{N}\sum_{j=0}^{K-1}
\Bigl(\varepsilon_{\mathrm{Jacobi},t,j}^{(n)}(x_{\tau_{t,j}}^{(n)})\Bigr)^2
+
C\,
\left(\frac{M_0Kd\log^2 T}{T}\right)^{2K+2}.
\label{eq:gs_stage2_final_closed}
\end{align}
This proves \eqref{eq:lemma4_gs_bound}. It remains to prove the claims \eqref{claim1} and  \eqref{initialization}.

\paragraph{Proof of claim \eqref{initialization}.}
Note that  $ x_{\tau_{t,j}}^{(0)}=x_{\tau_{t,0}}, \quad 0\le j\le K-1$. It immediately gives 
\[
J_{\tau_{t,j}}^{(0)}
=
\frac{\partial (x_{\tau_{t,j}}^{(0)}/\sqrt{1-\tau_{t,j}})}
{\partial (x_{\tau_{t,0}}/\sqrt{1-\tau_{t,0}})} =
\sqrt{\frac{1-\tau_{t,0}}{1-\tau_{t,j}}}\,I.
\]
Using $1-\tau_{t,j}\asymp 1-\tau_{t,i}$, one has
\begin{equation}
\|J_{\tau_{t,j}}^{(0)}\|\le c_7,
\qquad
\|J_{\tau_{t,j}}^{(0)}-I\|=
\Abs{\frac{1-\tau_{t,0}}{1-\tau_{t,j}}-1}
=
\frac{\tau_{t,0}-\tau_{t,j}}{1-\tau_{t,j}}
\le
c_8\frac{\log T}{T},
\label{eq:gs_stage2_J0_bounds}
\end{equation}
where $c_7,c_8>0$ are universal constants. For convenience,  define
\[
A(\tau)
:=
\frac{1}{2(1-\tau)^{3/2}}
\frac{\partial s_\tau^\star(x_\tau^\star)}
{\partial (x_\tau^\star/\sqrt{1-\tau})}.
\]
Similar to \eqref{eq:exact_jacobian_flow}, we have
\begin{equation} \label{eq:exaode1}
J_\tau^\star
=
I+\int_\tau^{\tau_{t,0}}A(u)J_u^\star\,du,
\qquad \tau\in[\tau_{t,K-1},\tau_{t,0}].
\end{equation}
According to Lemma \ref{lemma13}  with $k=0$, one has
\[
\|A(u)\|
\le
C_JM\frac{dK\log T}{u(1-u)},
\qquad u\in[\tau_{t,K-1},\tau_{t,0}].
\]
Therefore,  we obtain, for every $\tau\in[\tau_{t,K-1},\tau_{t,0}]$,
\[
\|J_\tau^\star\| \le
1+\int_\tau^{\tau_{t,0}} C_JM\frac{dK\log T}{u(1-u)}\|J_u^\star\|\,du \le \exp\!\left(
C_JM\int_\tau^{\tau_{t,0}}
\frac{dK\log T}{u(1-u)}\,du
\right),
\]
where the last inequality comes from  Gronwall's inequality. Using the fact that  $u\asymp \tau_{t,0}$ and
$1-u\asymp 1-\tau_{t,0}$ on $[\tau_{t,K-1},\tau_{t,0}]$, together with \eqref{eq:23f} in Lemma~\ref{lemma9}, we have
\[
\int_\tau^{\tau_{t,0}}\frac{dK\log T}{u(1-u)}\,du
\le
c_1\frac{dK\log^2T}{T}.
\]
Since $T\ge C_2d\log^4T$ and $K\le c\log T$, it implies
\begin{equation}
\sup_{\tau\in[\tau_{t,K-1},\tau_{t,0}]}
\|J_\tau^\star\| \le \exp\left(C_JMc_1\frac{dK\log^2T}{T}\right) \le c_8.
\label{eq:gs_stage2_Jstar_uniform}
\end{equation}
Substituting \eqref{eq:gs_stage2_Jstar_uniform} into \eqref{eq:exaode1} gives 
\begin{equation} \label{eq:Jstar}
\|J_{\tau_{t,j}}^\star-I\|
\le
\int_{\tau_{t,j}}^{\tau_{t,0}}
\|A(u)\|\,\|J_u^\star\|\,du
\le
c_8\int_{\tau_{t,j}}^{\tau_{t,0}}
C_JM\frac{dK\log T}{u(1-u)}\,du
\le
c_9\frac{dK\log^2T}{T} \le \frac12
\end{equation}
for all $0\le j\le K-1$, where $c_9$ is an universal constant. Finally, by the triangle inequality and using \eqref{eq:gs_stage2_J0_bounds},
\[
\|J_{\tau_{t,j}}^{(0)}-J_{\tau_{t,j}}^\star\|
\le
\|J_{\tau_{t,j}}^{(0)}-I\|
+
\|J_{\tau_{t,j}}^\star-I\|
\le
c_8\frac{\log T}{T}
+
c_9\frac{dK\log^2T}{T}
\le
c_{10}\frac{dK\log^2T}{T}
\]
holds for all $0\le j\le K-1$, where $c_{10}>0$ is an universal  constant.  This proves the  claim~\eqref{initialization}.

\paragraph{Proof of claim \eqref{claim1}.}
The proof is inductive in nature.  When $n=0$, through \eqref{eq:gs_stage2_J0_bounds},  it holds $\|J^{(0)}_{\tau_{t,i}}\|\le \widetilde{C}$ for any $0\le i \le K-1$. Next, assume that $\|J^{(n_0)}_{\tau_{t,i}}\|\le \widetilde{C}$ holds for all $0 \le i \le K-1$. We then prove $\|J^{(n_0+1)}_{\tau_{t,i}}\|\le \widetilde{C}$ holds for all $0 \le i \le K-1$ by induction again.  For the case $i=0$, we know
\(
J^{(n_0+1)}_{\tau_{t,0}}=I.
\)
Now  assume that \(
\|J_{\tau_{t,i}}^{(n_0+1)}\|\le \widetilde{C} \) holds for all $1 \le i \le k_0$.  According to \eqref{eq:gs_jacobian_update} and using the same approaches as before, we have 
\begin{align*}
\|J_{\tau_{t,k_0+1}}^{(n_0+1)}-I\|
&\le
\Biggl\|
\sum_{j<k_0+1} A_{ji}^{(t)}
\frac{\partial s_{\tau_{t,j}}^\star(x_{\tau_{t,j}}^{(n_0+1)})}
{\partial (x_{\tau_{t,j}}^{(n_0+1)}/\sqrt{1-\tau_{t,j}})}
J_{\tau_{t,j}}^{(n_0+1)}
+
\sum_{j\ge k_0+1} A_{ji}^{(t)}
\frac{\partial s_{\tau_{t,j}}^\star(x_{\tau_{t,j}}^{(n_0)})}
{\partial (x_{\tau_{t,j}}^{(n_0)}/\sqrt{1-\tau_{t,j}})}
J_{\tau_{t,j}}^{(n_0)}
\Biggr\|
\nonumber\\
&\quad+
\Biggl\|
\sum_{j<k_0+1} A_{ji}^{(t)}
\Biggl(
\frac{\partial s_{\tau_{t,j}}(x_{\tau_{t,j}}^{(n_0+1)})}
{\partial (x_{\tau_{t,j}}^{(n_0+1)}/\sqrt{1-\tau_{t,j}})}
-
\frac{\partial s_{\tau_{t,j}}^\star(x_{\tau_{t,j}}^{(n_0+1)})}
{\partial (x_{\tau_{t,j}}^{(n_0+1)}/\sqrt{1-\tau_{t,j}})}
\Biggr)
J_{\tau_{t,j}}^{(n_0+1)}
\Biggr\|
\nonumber\\
&\quad+
\Biggl\|
\sum_{j\ge k_0+1} A_{ji}^{(t)}
\Biggl(
\frac{\partial s_{\tau_{t,j}}(x_{\tau_{t,j}}^{(n_0)})}
{\partial (x_{\tau_{t,j}}^{(n_0)}/\sqrt{1-\tau_{t,j}})}
-
\frac{\partial s_{\tau_{t,j}}^\star(x_{\tau_{t,j}}^{(n_0)})}
{\partial (x_{\tau_{t,j}}^{(n_0)}/\sqrt{1-\tau_{t,j}})}
\Biggr)
J_{\tau_{t,j}}^{(n_0)}
\Biggr\| \\
&\lesssim \frac{d\log^2T\log K}{T}
+
\sqrt{\log K} \,\tau_{t,i}\frac{\log T}{T}
\Biggl(
\sum_{m=n_0}^{n_0+1}\sum_{j=0}^{K-1}
\left(\varepsilon_{\mathrm{Jacobi},t,j}^{(m)}(x_{\tau_{t,j}}^{(m)})\right)^2
\Biggr)^{1/2}.
\end{align*}
On the event $E_t$ and note that $T\ge C_2d \log^4 T$, one has $\|J_{\tau_{t,k_0+1}}^{(n_0+1)}-I\| \le c'$ for  a sufficiently
small absolute constant $c'>0$, which gives \( \|J_{\tau_{t,k_0+1}}^{(n_0+1)}\|\le \widetilde{C}\). By induction,  we have
\begin{equation*}
\|J_{\tau_{t,i}}^{(n)}\|\le \widetilde{C},
\qquad
0\le i\le K-1,\ \ 0\le n\le N,
\end{equation*}
which gives claim~\eqref{claim1}.

Next, we turn to prove the second part of Lemma \ref{lemma4}, namely,  to prove \eqref{lemma4_lemma2_condition}.  From  \eqref{eq:gs_stage2_final_closed}, and on the event $E_t$, we have
\[
\|J_{\tau_{t,K-1}}^{(N)}-J_{\tau_{t,K-1}}^\star\| \le \frac14
\]
Combining this with \eqref{eq:Jstar}, we obtain
\[
\|J_{\tau_{t,K-1}}^{(N)}-I\|
\le
\|J_{\tau_{t,K-1}}^{(N)}-J_{\tau_{t,K-1}}^\star\|
+
\|J_{\tau_{t,K-1}}^\star-I\|
\le
\frac34.
\]
Hence $J_{\tau_{t,K-1}}^{(N)}$ is invertible, and by the Neumann-series bound,
\[
\left\|\left(J_{\tau_{t,K-1}}^{(N)}\right)^{-1}\right\|
\le
\frac{1}{1-\|J_{\tau_{t,K-1}}^{(N)}-I\|}
\le 4.
\]
This proves \eqref{lemma4_lemma2_condition} and completes the proof.

\end{proof}

\subsection{Proof of Lemma~\ref{lemma5}} \label{lemma5_proof}
\begin{proof}
From the definition of the total variation distance and set $\mathcal E$ in \eqref{eq:def_E_step2_lemma},  we have
\begin{eqnarray*}
\operatorname{TV}(q_1,p_1)
&= &  \int_{\{x\in \R^d: q_1(x)>p_1(x)\}}(q_1(x)-p_1(x))\,dx \\
&=&  \int_{\mathcal E} (q_1(x)-p_1(x))\,dx + \int_{\{x\in \R^d: p_1(x)<q_1(x)\le \exp(-c_6 d\log T)\}} (q_1(x)-p_1(x))\,dx \notag\\
&\le&  \int_{\mathcal E} (q_1(x)-p_1(x))\,dx + \int_{\{x\in \R^d: q_1(x)\le \exp(-c_6 d\log T)\}} q_1(x)\,dx \notag\\
&=:&  \mathbb{E}_{Y_1\sim p_1}\!\left[ \left(\frac{q_1(Y_1)}{p_1(Y_1)}-1\right)\mathbf{1}\{Y_1\in \mathcal E\} \right] +r_T.
\end{eqnarray*}
For the term \(r_T\), let \(K_1> 0\) be any fixed  constant, one has
\begin{eqnarray}
r_T &\le & \int_{\{x\in\mathbb R^d:\ \|x\|_2\le T^{c_R+K_1}\sqrt d,\ 
q_1(x)\le \exp(-c_6 d\log T)\}} q_1(x)\,dx +
\int_{\{x\in\mathbb R^d:\ \|x\|_2> T^{c_R+K_1}\sqrt d\}}
q_1(x)\,dx  \notag\\
&\le &  \exp(-c_6 d\log T)\,
\operatorname{Vol}\!\left(
\left\{x\in\mathbb R^d:\|x\|_2\le T^{c_R+K_1}\sqrt d\right\}
\right) + \mathbb{P}_{X_1\sim q_1}\!\left(\|X_1\|_2>T^{c_R+K_1}\sqrt d\right), \label{eq:RT}
\end{eqnarray}
where $c_R$ is the constant defined in Assumption \ref{assup:1}, and  \(\operatorname{Vol}(\cdot)\) denotes the \(d\)-dimensional Lebesgue measure. Recall that 
\[
X_1=\sqrt{\bar{\alpha}_1}\,X_0+\sqrt{1-\bar{\alpha}_1}\,Z,
\qquad
Z\sim \mathcal{N}(0,I_d),
\]
and hence
\begin{align*}
\mathbb{E}\bigl[\|X_1\|_2^2\bigr]
\le
2\bar{\alpha}_1\mathbb{E}\bigl[\|X_0\|_2^2\bigr]
+
2(1-\bar{\alpha}_1)\mathbb{E}\bigl[\|Z\|_2^2\bigr] \le
2T^{2c_R}+2d,
\end{align*}
where we used Assumption  \ref{assup:1} in the last inequality. Therefore, by
Markov's inequality,
\begin{align}
\mathbb{P}_{X_1\sim q_1}\!\left(\|X_1\|_2>T^{c_R+K_1}\sqrt d\right)
\le
\frac{\mathbb{E}\bigl[\|X_1\|_2^2\bigr]}
     {d\cdot T^{2c_R+2K_1}} \le
\frac{2T^{2c_R}+2d}{d \cdot T^{2c_R+2K_1}}
\lesssim T^{-2K_1}.
\label{eq:tail_markov_step2_appendix}
\end{align}

By the standard formula for the volume of
the \(d\)-dimensional Euclidean ball,
\[
\operatorname{Vol}\left(\left\{x\in\mathbb R^d:\|x\|_2\le r\right\}\right)
=
\frac{\pi^{d/2}}{\Gamma(d/2+1)}r^d,
\]
see, e.g., \cite[Lecture~1]{ball1997elementary}. Moreover, by the
Stirling-type lower bound for the Gamma function, see \cite[p.~257, Eq.~(6.1.38)]{abramowitz1964handbook}, there exists an absolute constant \(c>0\)
such that
\[
\Gamma(z+1)\ge c\sqrt z\left(\frac ze\right)^z,\qquad z\ge 1.
\]
Taking \(z=d/2\), we obtain, for all \(d\ge 2\),
\[
\operatorname{Vol}\left(\left\{x\in\mathbb R^d:\|x\|_2\le r\right\}\right)
\le
\frac{\sqrt 2}{c\sqrt d}
\left(\frac{2\pi e}{d}\right)^{d/2}r^d
\le
\left(\frac{C r}{\sqrt d}\right)^d .
\]
 Hence, taking
\(r=T^{c_R+K_1}\sqrt d\), we get
\[
\operatorname{Vol}\!\left(\{x\in\mathbb R^d:\|x\|_2\le T^{c_R+K_1}\sqrt d\}\right)
\le
\bigl(C T^{c_R+K_1}\bigr)^d .
\]
Consequently,
\begin{eqnarray}
\exp(-c_6 d\log T)\,
\operatorname{Vol}\!\left(\{x\in\mathbb R^d:\|x\|_2\le T^{c_R+K_1}\sqrt d\}\right) & \le &
\exp\!\left(
-\bigl(c_6-c_R-K_1\bigr)d\log T+d\log C \right) \notag \\
&\le & \exp(-c_2\,d\log T), \label{eq:volume_term_step2_appendix}
\end{eqnarray}
where the last inequality follows by choosing  \(c_6\) sufficiently large so that \(c_6>c_R+K_1+1\), and $c_2$ is a constant.  Putting  \eqref{eq:tail_markov_step2_appendix} and \eqref{eq:volume_term_step2_appendix} into \eqref{eq:RT}, we obtain
\[
r_T
\le
CT^{-2K_1}+\exp(-c_2\,d\log T),
\]
where $C$ is a constant. This completes the proof.
\end{proof}

\section{Proofs of auxiliary lemmas}
\label{app:proof_auxiliary_estimates}
\subsection{Proof of Lemma~\ref{lemma8}}
\label{lemma8_proof}

\begin{proof}
Let
\[
\phi(z):=
\frac{\tau_{t,0}+\tau_{t,K-1}}{2}
+
\frac{\tau_{t,0}-\tau_{t,K-1}}{2}\,z,
\qquad z\in[-1,1].
\]
Then \(\phi\) maps \([-1,1]\) bijectively onto \([\tau_{t,K-1},\tau_{t,0}]\), and
\(\phi(z_j)=\tau_{t,j}\) for all \(0\le j\le K-1\). Let \(\ell_j\) denote the Lagrange basis polynomials associated with the standard
Chebyshev--Lobatto nodes \(\{z_j\}_{j=0}^{K-1}\) on \([-1,1]\), namely,
\begin{equation} \label{lj_define}
    \ell_j(z):=
\frac{\prod_{j':\, j'\neq j}(z-z_{j'})}
{\prod_{j':\, j'\neq j}(z_j-z_{j'})},
\qquad 0\le j\le K-1.
\end{equation}
By the definition of \(\psi_{t,j}\) as in \eqref{eq:basicfun}, a direct calculation gives
\begin{equation} \label{eq:relapsili}
\psi_{t,j}(\phi(z))=\ell_j(z),\qquad 0\le j\le K-1.
\end{equation}
Therefore,
\[
\sup_{\tau\in[\tau_{t,K-1},\tau_{t,0}]}\sum_{j=0}^{K-1}|\psi_{t,j}(\tau)|
=
\sup_{z\in[-1,1]}\sum_{j=0}^{K-1}|\ell_j(z)|.
\]
For the standard Chebyshev--Lobatto nodes on \([-1,1]\), the classical bound 
\[
\sup_{z\in[-1,1]}\sum_{j=0}^{K-1}|\ell_j(z)|
\le
\frac{2}{\pi}\log K+1
\]
holds; see \cite[Chapter~15, Theorem~15.2]{trefethen2019atap}. This completes the proof.
\end{proof}

\subsection{Proof of Lemma~\ref{lemma9}} \label{lemma9_proof}
\begin{proof}
We prove the lemma one by one.  First, \eqref{eq:23a}--\eqref{eq:23e} follow directly from Lemma 1 in \cite{li2025faster}.

\paragraph{Proof of \eqref{eq:23f}.}
For any fixed $2\le t\le T$, it holds $\tau_{t,K-1}=1-\bar{\alpha}_{t-1}\le \tau_{t,i}\le \tau_{t,0}=1-\bar{\alpha}_t$ for all $i=0,1,\ldots,  K-1$. Therefore, for any $0\le i_1,i_2,i_3,i_4\le K-1$,
\[
\left|
\frac{\tau_{t,i_1}-\tau_{t,i_2}}{\tau_{t,i_3}(1-\tau_{t,i_4})}
\right|
\le
\frac{\bar{\alpha}_{t-1}-\bar{\alpha}_t}{(1-\bar{\alpha}_{t-1})\bar{\alpha}_t}
=
\frac{1}{\alpha_t}\cdot
\frac{1-\alpha_t}{1-\bar{\alpha}_{t-1}} \le 8c_1\frac{\log T}{T},
\]
where the last inequality comes from \eqref{eq:23a} and \eqref{eq:23b}. This proves \eqref{eq:23f}.

\paragraph{Proof of \eqref{eq:23g}.}
 In the following, cause \(K\ge 2\) and $0\le j \le K-1$.
From the definition of  $\gamma_{t,j}(\cdot)$ in \eqref{eq:gammtk}, one has
\begin{equation} \label{eq:gammtj}
|\gamma_{t,j}(\tau_{t,i})|
\le \int_{\tau_{t,i}}^{\tau_{t,0}} |\psi_{t,j}(\tau)|\,d\tau
\le
(\tau_{t,0}-\tau_{t,i})
\sup_{\tau\in[\tau_{t,K-1},\,\tau_{t,0}]} |\psi_{t,j}(\tau)| =(\tau_{t,0}-\tau_{t,i}) \sup_{z\in[-1,1]} |\ell_j(z)|,
\end{equation}
where the last equation comes from \eqref{eq:relapsili}. It then suffices to upper bound $\sup_{z\in[-1,1]} |\ell_j(z)|$.
Let
\[
\omega_{K}(z):=\prod_{j=0}^{K-1}(z-z_j),\quad
z_j=\cos\left(\frac{j\pi}{K-1}\right),
\qquad 0\le j\le K-1.
\]
Then with \eqref{lj_define}, we obtain
\[
\ell_j(z)=\frac{\omega_{K}(z)}{\omega_{K}'(z_j)(z-z_j)}.
\]
According to  \cite[Chapter~1, \S1.2.2]{mason2003chebyshev}, one has
\[
\omega_{K}(z)=2^{1-(K-1)}(z^2-1)U_{K-2}(z),
\]
where \(U_{K-2}\) is the Chebyshev polynomial of the second kind, characterized by
$U_{K-2}(\cos\theta)=\frac{\sin((K-1)\theta)}{\sin\theta}$, $\theta\in[0,\pi]$.  A direct computation gives
\[
|\ell_0(z)|=\frac{(1+z)|U_{K-2}(z)|}{2(K-1)},
\qquad
|\ell_{K-1}(z)|=\frac{(1-z)|U_{K-2}(z)|}{2(K-1)},
\]
and, for \(1\le j\le K-2\),
\[
|\ell_j(z)|=\frac{(1-z^2)|U_{K-2}(z)|}{(K-1)|z-z_j|}.
\]

Now write
$z=\cos\theta, z_j=\cos\theta_j,
\theta\in[0,\pi], \theta_j=\frac{j\pi}{K-1}.$
Using
\[
U_{K-2}(\cos\theta)=\frac{\sin((K-1)\theta)}{\sin\theta},
\]
we estimate $\sup_{z\in[-1,1]} |\ell_j(z)|$  separately:

\emph{Endpoint case \(j=0\).}
We have
\[
|\ell_0(z)|
=
\frac{(1+\cos\theta)|\sin((K-1)\theta)|}{2(K-1)\sin\theta}
\le
\frac{(1+\cos\theta)\,(K-1)\sin\theta}{2(K-1)\sin\theta}
=
\frac{1+\cos\theta}{2}
\le 1.
\]

\medskip
\noindent
\emph{Endpoint case \(j=K-1\).}
Similarly,
\[
|\ell_{K-1}(z)|
=
\frac{(1-\cos\theta)|\sin((K-1)\theta)|}{2(K-1)\sin\theta}
\le
\frac{1-\cos\theta}{2}
\le 1.
\]

\medskip
\noindent
\emph{Interior case \(1\le j\le K-2\).}
In this case,
\[
|\ell_j(z)|
=
\frac{\sin\theta\,|\sin((K-1)\theta)|}{(K-1)|\cos\theta-\cos\theta_j|} \le \frac{\sin\theta\,|\sin(\theta-\theta_j)|}{|\cos\theta-\cos\theta_j|}  \le \frac{\sin\theta\left|\cos\frac{\theta-\theta_j}{2}\right|}
{\sin\frac{\theta+\theta_j}{2}}.
\]
where the first inequality follows from the fact that \((K-1)\theta_j=j\pi\), and hence
\[
|\sin((K-1)\theta)|
=
|\sin((K-1)(\theta-\theta_j))|
\le
(K-1)\,|\sin(\theta-\theta_j)|.
\]
On the other hand, note that 
\[
\sin\theta+\sin\theta_j
=
2\sin\frac{\theta+\theta_j}{2}\cos\frac{\theta-\theta_j}{2}
\ge \sin\theta,
\]
because \(\theta,\theta_j\in[0,\pi]\) implies
$\cos\frac{\theta-\theta_j}{2}\ge 0$, 
$\sin\theta_j\ge 0$. This immediately gives
\[
|\ell_j(z)|
\le
2\cos^2\frac{\theta-\theta_j}{2}
\le 2.
\]
Putting it into \eqref{eq:gammtj}, we have
\[
|\gamma_{t,j}(\tau_{t,i})|
\le
2(\tau_{t,0}-\tau_{t,i}),
\]
which proves \eqref{eq:23g}

\paragraph{Proof of \eqref{eq:23h}.}
For every $0\le i\le K-1$ and $2\le t\le T$, since
$\tau_{t,K-1}=1-\bar{\alpha}_{t-1}\le \tau_{t,i}\le \tau_{t,0}=1-\bar{\alpha}_t$, 
we have
$\bar{\alpha}_t=1-\tau_{t,0}\le 1-\tau_{t,i}\le 1-\tau_{t,K-1}=\bar{\alpha}_{t-1}$, 
and
$1-\bar{\alpha}_{t-1}=\tau_{t,K-1}\le \tau_{t,i}\le \tau_{t,0}=1-\bar{\alpha}_t$.
Therefore, it suffices to show
\[
\bar{\alpha}_t\asymp \bar{\alpha}_{t-1}
\qquad\text{and}\qquad
1-\bar{\alpha}_{t-1}\asymp 1-\bar{\alpha}_t,
\]
which follow directly from \eqref{eq:23a}and \eqref{eq:23c}. This proves \eqref{eq:23h}.

\paragraph{Proof of \eqref{eq:23i}.}
For $K\ge2$, by Lemma~\ref{lemma8}, we have
\[
\sum_{j=0}^{K-1}|\psi_{t,j} (\tau)|\le
\frac{2}{\pi}\log K + 1
\le
C_4\log K.
\]
Hence, for any $0\le i\le K-1$,
\begin{align*}
\sum_{j=0}^{K-1}|\gamma_{t,j}(\tau_{t,i})|
&\le
\int_{\tau_{t,i}}^{\tau_{t,0}}\sum_{j=0}^{K-1}|\psi_{t,j}(\tau)|\,d\tau
\le
C_4(\tau_{t,0}-\tau_{t,i})\log K,
\end{align*}
where $\gamma_{t,j}$ is defined in\eqref{eq:gammtk}. This proves\eqref{eq:23i}.

\paragraph{Proof of \eqref{eq:23j}.}
Using the definition of $A_{ji}^{(t)}=\frac{1}{2}\gamma_{t,j}\left(\tau_{t,i}\right)
\left(1-\tau_{t,j}\right)^{-3/2}$
together with \eqref{eq:23h} and \eqref{eq:23i}, we obtain
\begin{align*}
\sum_{j=0}^{K-1}|A_{ji}^{(t)}|
&=
\frac{1}{2}\sum_{j=0}^{K-1}\frac{|\gamma_{t,j}(\tau_{t,i})|}{(1-\tau_{t,j})^{3/2}}
\le
\frac{1}{2}\frac{1}{(1-\tau_{t,0})^{3/2}}
\sum_{j=0}^{K-1}|\gamma_{t,j}(\tau_{t,i})|
\le
C_4\frac{(\tau_{t,0}-\tau_{t,i})\log K}{(1-\tau_{t,0})^{3/2}}.
\end{align*}
This proves \eqref{eq:23j}.

\paragraph{Proof of \eqref{eq:23k}.}
According to \eqref{eq:23g}, one has $|\gamma_{t,j}(\tau_{t,i})| \le 2(\tau_{t,0}-\tau_{t,i})$. Therefore, 
\[
|A_{ji}^{(t)}|
=
\frac{1}{2}\frac{|\gamma_{t,j}(\tau_{t,i})|}{(1-\tau_{t,j})^{3/2}}
\le
C_4\frac{\tau_{t,0}-\tau_{t,i}}{(1-\tau_{t,0})^{3/2}},
\]
which implies
\begin{align*}
\sum_{j=0}^{K-1}|A_{ji}^{(t)}|^2
&\le
\Bigl(\max_{0\le j\le K-1}|A_{ji}^{(t)}|\Bigr)
\sum_{j=0}^{K-1}|A_{ji}^{(t)}|
\le
C_5\frac{(\tau_{t,0}-\tau_{t,i})^2\log K}{(1-\tau_{t,0})^{3}}.
\end{align*}
This proves \eqref{eq:23k}.
\end{proof}

\subsection{Proof of Lemma~\ref{lemma10}}
\label{lemma10_proof}
\begin{proof}
For any \(r>0\), Bayes' rule gives
\begin{eqnarray}
\mathbb{P}\!\left(
\left\|\sqrt{1-\tau}X_0-y\right\|_2\ge r
\,\middle|\, \bar{X}_\tau=y
\right) &=&  \frac{ \int_{\{\|\sqrt{1-\tau}x_0-y\|_2\ge r\}}
p_{X_0}(x_0)\,p_{\bar{X}_\tau\mid X_0}(y\mid x_0)\,dx_0
}{
p_{\bar{X}_\tau}(y)
} \notag \\
&\le & \frac{1}{p_{\bar{X}_\tau}(y)} \cdot 
\frac{1}{(2\pi\tau)^{d/2}}
\exp\!\left(-\frac{r^2}{2\tau}\right)  \notag \\
&\le & \exp\!\left(
\theta_\tau(y)d\log T+\frac{c_0}{2}d\log T-\frac{r^2}{2\tau}
\right), \label{eq:Bayes}
\end{eqnarray}
where the first inequality follows from  the fact that 
\[
p_{\bar{X}_\tau\mid X_0}(y\mid x_0)
=
\frac{1}{(2\pi\tau)^{d/2}}
\exp\!\left(
-\frac{\|y-\sqrt{1-\tau}\,x_0\|_2^2}{2\tau}
\right),
\]
and the second inequality comes from the definition of \(\theta_\tau(y)\) that  $ \frac{1}{p_{\bar{X}_\tau}(y)} \le
\exp\!\left(\theta_\tau(y)d\log T\right)$ and the fact \(\tau\ge T^{-c_0}\). Choosing 
\[
r=5c_5\sqrt{\theta_\tau(y)\,d\,\tau\log T}
\]
for some constant $c_5\ge2$, then
\[
\mathbb{P}\!\left(
\left\|\sqrt{1-\tau}X_0-y\right\|_2
\ge
5c_5\sqrt{\theta_\tau(y)\,d\,\tau\log T}
\,\middle|\, \bar{X}_\tau=y
\right)
\le
\exp\!\left(
\left[
1+\frac{c_0}{2\theta_\tau(y)}-\frac{25}{2}c_5^2
\right]
\theta_\tau(y)d\log T
\right).
\]
Since \(\theta_\tau(y)\ge c_6\ge c_0\), we have
\[
\frac{c_0}{2\theta_\tau(y)}\le \frac12,
\]
and therefore
\[
1+\frac{c_0}{2\theta_\tau(y)}-\frac{25}{2}c_5^2
\le
\frac32-\frac{25}{2}c_5^2
\le
-c_5^2
\qquad\text{for all }c_5\ge2.
\]
Thus
\begin{equation}\label{eq:mod_ct_special_tail_clean}
\mathbb{P}\!\left(
\left\|\sqrt{1-\tau}X_0-y\right\|_2
\ge
5c_5\sqrt{\theta_\tau(y)\,d\,\tau\log T}
\,\middle|\, \bar{X}_\tau=y
\right)
\le
\exp\!\left(-c_5^2\theta_\tau(y)d\log T\right).
\end{equation}
This gives \eqref{eq:mod_lemma2_ct_m1}. Next, note that  \(\theta_\tau(y)\ge c_6\ge c_0\). It then follows from \eqref{eq:Bayes} that 
\begin{equation*}
\mathbb{P}\!\left(
\left\|\sqrt{1-\tau}X_0-y\right\|_2\ge u
\,\middle|\, \bar{X}_\tau=y
\right)
\le
\exp\!\left(
\frac32\,\theta_\tau(y)d\log T-\frac{u^2}{2\tau}
\right),
\qquad \forall u>0.
\end{equation*}
For every integer \(k\ge1\), the tail-integral identity yields
\[
\mathbb{E}\!\left[
\left\|\sqrt{1-\tau}X_0-y\right\|_2^k
\,\middle|\, \bar{X}_\tau=y
\right]
=
k\int_0^\infty
u^{k-1}
\mathbb{P}\!\left(
\left\|\sqrt{1-\tau}X_0-y\right\|_2\ge u
\,\middle|\, \bar{X}_\tau=y
\right)\,du.
\]
In particular, for \(k=1\),  we have
\begin{eqnarray*}
&& \mathbb{E}\!\left[
\left\|\sqrt{1-\tau}X_0-y\right\|_2
\,\middle|\, \bar{X}_\tau=y
\right] \\
&\le&  10\sqrt{\theta_\tau(y)\,d\,\tau\log T} +
\int_{10\sqrt{\theta_\tau(y)\,d\,\tau\log T}}^\infty
\mathbb{P}\!\left(
\left\|\sqrt{1-\tau}X_0-y\right\|_2\ge u
\,\middle|\, \bar{X}_\tau=y
\right)\,du \\
&\le&  10\sqrt{\theta_\tau(y)\,d\,\tau\log T} +
\int_{10\sqrt{\theta_\tau(y)\,d\,\tau\log T}}^\infty  \exp\!\left(
\frac32\,\theta_\tau(y)d\log T-\frac{u^2}{2\tau}
\right) \,du \\
&\lesssim&  \sqrt{\theta_\tau(y)\,d\,\tau\log T},
\end{eqnarray*}
where the last inequality comes from 
 the Gaussian tail estimate
\[
\int_a^\infty e^{-u^2/(2\tau)}\,du
\le \frac{\tau}{a}e^{-a^2/(2\tau)},
\qquad \mbox{for all} \quad  a>0.
\]
This proves \eqref{eq:mod_lemma2_ct_m1}. For the cases  \(k=2,3,4\), the proof is similar, and so we omit it.
\end{proof}

\subsection{Proof of Lemma~\ref{lemma13}} \label{proof_lemma13}
\begin{proof}
The proof follows the argument used for Claims \textup{(64a)} and \textup{(64b)}
in Appendix~A of \cite{li2025faster}, while keeping track of the dependence on
the order $k$ explicitly. 
Define
\[
u_k:=
\frac{\partial^k}{\partial\tau^k}
\left(
\frac{s_\tau^\star(x_\tau^\star)}{(1-\tau)^{3/2}}
\right),
\qquad
\theta:=\theta_\tau(x_\tau^\star).
\]
Then for sufficiently small $\delta$,  the Taylor expansion  gives
\begin{equation*}   
   \frac{s_{\tau+\delta}^{\star}\left(x_{\tau+\delta}^{\star}\right)}{(1-\tau-\delta)^{3 / 2}}-\frac{s_\tau^{\star}\left(x_\tau^{\star}\right)}{(1-\tau)^{3 / 2}}= \sum_{k=1}^{\infty} \frac{\delta^k}{k!} u_k.
\end{equation*}
As shown in \cite[Equation (74)]{li2025faster}, and recall $u_\tau^\star:=x_\tau^\star/\sqrt{1-\tau}$ one has
\begin{equation}   \label{uk}
\frac{s_{\tau+\delta}^\star(x_{\tau+\delta}^\star)}{(1-\tau-\delta)^{3/2}} =
-\frac{1}{(\tau+\delta)(1-\tau-\delta)}\left(\frac{x_{\tau+\delta}^{\star}}{\sqrt{1-\tau-\delta}}-\frac{x_\tau^{\star}}{\sqrt{1-\tau}}+\frac{\displaystyle
\int p_{X_0|\bar{X}_\tau}(x_0|x_\tau^\star)\,
e^{\Delta}\,
(u_\tau^\star-x_0)\,dx_0}
{\displaystyle
\int p_{X_0|\bar{X}_\tau}(x_0|x_\tau^\star)\,
e^{\Delta}\,dx_0}\right),
\end{equation}
where
\[
\Delta:=\frac{(1-\tau)\left\|x_\tau^{\star} / \sqrt{1-\tau}-x_0\right\|_2^2}{\tau}-\frac{(1-\tau-\delta)\left\|x_{\tau+\delta}^{\star} / \sqrt{1-\tau-\delta}-x_0\right\|_2^2}{\tau+\delta}=:
\sum_{k\ge1}\frac{\delta^k}{k!}v_k.
\]
 We next prove that there exist universal
constants $M_1>1$ and $C_{\mathrm s},\bar{C}>0$ such that
\begin{equation} \label{uk1}
\|u_k\|_2 \le
C_{\mathrm s}(M_1K)^k k!\,
\sqrt{\frac{d\theta\log T}{\tau(1-\tau)^3}}
\left(
\frac{d\theta\log T}{\tau(1-\tau)}
\right)^k,
\qquad 0\le k\le K,
\end{equation}
and, for every $C\ge 2$,
\begin{equation} \label{vk1}
|v_k| \le
\frac{\bar{C}C^2}{M_1K}
(M_1K)^k k!\,
\left(
\frac{d\theta\log T}{\tau(1-\tau)}
\right)^k,
\qquad 1\le k\le K,
\end{equation}
provided that
\[
\|u_\tau^\star-x_0\|_2
\le
5C\sqrt{\frac{d\theta\tau\log T}{1-\tau}}.
\]
Indeed, for $k=0$, the results hold directly from Lemma~\ref{lemma10} (see \cite[Equation (75a)]{li2025faster} for more details). Suppose that \eqref{uk1} and \eqref{vk1}  hold for all $k\le k_0$. Then, by the same arguments to \cite{li2025faster}, we have
\begin{eqnarray*}
&& \left|v_{k_0+1}\right| \\
&\leq &  (k_0+1)!\Biggl[\tau^{-k_0}\left\|u_{\tau}^{\star}-x_0\right\|_2^2+\frac{1}{\left(k_0+1\right)!} \frac{1-\tau}{\tau}\left\|u_{\tau}^{\star}-x_0\right\|_2\left\|u_{k_0}\right\|_2+\|u_\tau^\star-x_0\|_2
\sum_{\ell=1}^{k_0} \frac{\tau^{-\ell-1}}{(k_0+1-\ell)!}\|u_{k_0-\ell}\|_2\\
&& +\frac{1-\tau}{4\tau}\sum_{\ell=1}^{k_0}
\frac{1}{\ell!(k_0+1-\ell)!}\|u_{\ell-1}\|_2\|u_{k_0-\ell}\|_2+\sum_{\ell=1}^{k_0-1}\sum_{j=1}^{k_0-\ell}
\frac{1}{j!(k_0+1-\ell-j)!}\tau^{-\ell-1}
\|u_{j-1}\|_2\|u_{k_0-\ell-j}\|_2\Biggl].
\end{eqnarray*}
Using the induction hypothesis \eqref{uk1} and \eqref{vk1}, we obtain
\begin{eqnarray*}
\left|v_{k_0+1}\right| &\leq &  (k_0+1)!\Biggl[ 25C^2
+\frac{5C_{\mathrm s}C}{k_0+1}(M_1K)^{k_0}
+5C_{\mathrm s}C(M_1K)^{k_0}
+C_{\mathrm s}^2(M_1K)^{k_0-1}
+2C_{\mathrm s}^2(M_1K)^{k_0}
\Biggr]
\left(\frac{d \theta \log T}{\tau(1-\tau)}\right)^{k_0+1} \\
&\le & \frac{\bar{C}C^2}{M_1K}
(M_1K)^{k_0+1} (k_0+1)! \left(\frac{d \theta \log T}{\tau(1-\tau)}\right)^{k_0+1}, 
\end{eqnarray*}
provided $\bar{C}>5(1+C_{s}+C^2_{s})$ and then $M_1 \ge 100 \bar{C}c_7^2$. Therefore, we have verified \eqref{vk1} for $k=k_0+1$. Next, we prove that \eqref{uk1} holds for $k=k_0+1$. Let $e^{\Delta}=: \sum_{k=0}^{\infty} \frac{\delta^k}{k!} w_k$ denote the Taylor expansion of $e^{\Delta}$. Then one can show that $w_0=1$ and for all $1 \leq k \leq k_0+1$,
\begin{align}
    \left|w_k\right|
    &=
    \left|\sum_{\ell=1}^k \frac{1}{\ell!}\sum_{j_1+\cdots+j_{\ell}=k} \frac{k!}{j_{1}!\cdots j_{\ell}!} v_{j_1} \cdots v_{j_{\ell}}\right| \le
    (M_1K)^k k!
    \left(\frac{d \theta \log T}{\tau(1-\tau)}\right)^k
    \sum_{\ell=1}^k
    \frac{1}{\ell!}
    \binom{k-1}{\ell-1}
    \left(\frac{\bar{C}C^2}{M_1K}\right)^\ell\nonumber \\
    &\le
    \frac{\bar{C}C^2}{M_1K}
    \left(1+\frac{\bar{C}C^2}{M_1K}\right)^{k-1}
    (M_1K)^k k!
    \left(\frac{d \theta \log T}{\tau(1-\tau)}\right)^k \le \frac{C}{K}
(MK)^k k!
\left(\frac{d \theta \log T}{\tau(1-\tau)}\right)^k.
    \label{wk}
\end{align}
Denote
\[
\int_{x_0} p_{X_0 \mid \bar{X}_\tau}
\left(x_0 \mid x_\tau^{\star}\right)
\exp(\Delta)\left(u_\tau^{\star}-x_0\right)\mathrm{d}x_0
:= \sum_{k=0}^{\infty}\frac{\delta^k}{k!}a_k, 
\quad
\int_{x_0} p_{X_0 \mid \bar{X}_\tau}
\left(x_0 \mid x_\tau^{\star}\right)
\exp(\Delta)\mathrm{d}x_0
:= \sum_{k=0}^{\infty}\frac{\delta^k}{k!}b_k .
\]
Then for $0\le k\le k_0+1$, similar to \cite{li2025faster}, one has
\[
\begin{aligned}
a_0
=&
\int_{x_0} p_{X_0 \mid \bar{X}_\tau}
\left(x_0 \mid x_\tau^{\star}\right)
\left(u_\tau^{\star}-x_0\right) w_0 \,\mathrm{d}x_0  =
\int_{x_0} p_{X_0 \mid \bar{X}_\tau}
\left(x_0 \mid x_\tau^{\star}\right)
\left(u_\tau^{\star}-x_0\right)\mathrm{d}x_0,\\
&\left\|a_k\right\|_2
\le
C' (M_1K)^k k!
\sqrt{\frac{d\theta\tau\log T}{1-\tau}}
\left(
\frac{d\theta\log T}{\tau(1-\tau)}
\right)^k,
\qquad 0\le k\le k_0+1,\\
&b_0=1,\qquad
\left|b_k\right|
\le
\frac{C''}{K}
(M_1K)^k k!
\left(
\frac{d\theta\log T}{\tau(1-\tau)}
\right)^k,
\qquad 1\le k\le k_0+1.
\end{aligned}
\]
Here $C' =O(1+c_7)$ is an absolute constant, and $0<C'' \le 1/4$.  Next define $d_k$ through the Taylor expansion
\[
 \frac{\sum_{m=0}^{\infty}\frac{\delta^m}{m!}a_m}{\sum_{\ell=0}^{\infty}\frac{\delta^\ell}{\ell!}b_\ell}=\frac{
\int_{x_0} p_{X_0 \mid \bar{X}_\tau}\left(x_0 \mid x_\tau^{\star}\right)
\exp(\Delta)\left(u_{\tau}^{\star}-x_0\right)\mathrm{d}x_0}
{
\int_{x_0} p_{X_0 \mid \bar{X}_\tau}\left(x_0 \mid x_\tau^{\star}\right)
\exp(\Delta)\mathrm{d}x_0}:=\sum_{k=0}^{\infty}\frac{\delta^k}{k!}d_k.
\]
Then
\begin{align}
    d_0
    =
    \int_{x_0} p_{X_0 \mid \bar{X}_\tau}\left(x_0 \mid x_\tau^{\star}\right)
    \left(\frac{x_\tau^{\star}}{\sqrt{1-\tau}}-x_0\right) \mathrm{d} x_0,\quad
    d_k
    =
    a_k-\sum_{\ell=0}^{k-1}\binom{k}{\ell}d_{\ell} b_{k-\ell},
    \qquad 1\le k\le k_0+1.\nonumber
\end{align}
Choosing $C''' \ge 2C'$, we obtain
\begin{align}
\left\|d_k\right\|_2
&\le
\left(
C'+\frac{k}{K}C''C'''
\right)
(M_1K)^k k!
\sqrt{\frac{d \theta \tau \log T}{1-\tau}}
\left(\frac{d \theta \log T}{\tau(1-\tau)}\right)^k\nonumber \\
&\le
C'''(M_1K)^k k!
\sqrt{\frac{d \theta \tau \log T}{1-\tau}}
\left(\frac{d \theta \log T}{\tau(1-\tau)}\right)^k,\quad0 \leq k \leq k_0+1.
\label{dk}
\end{align}
Similarly to (80) in \cite{li2025faster}, through \eqref{uk} we have
\[
\sum_{k=1}^{\infty} \frac{\delta^k}{k!} u_k=\sum_{k=1}^{\infty}\left(\frac{1}{\tau(1-\tau)} \frac{u_{k-1}-2 d_k}{2 k!}+\sum_{\ell=1}^k\left[(-\tau)^{-\ell-1}-(1-\tau)^{-\ell-1}\right] \frac{2 d_{k-\ell}-u_{k-1-\ell}}{2(k-\ell)!}\right) \delta^k.
\]
By hypothesis \eqref{uk1} and \eqref{dk} hold for $0\le k<k_0$, we now prove \eqref{uk1} for $k=k_0+1$
\begin{align}
    \left\|u_{k_0+1}\right\|_2
    &=
    \left\|\frac{1}{\tau(1-\tau)}\left(\frac{u_{k_0}}{2}-d_{k_0+1}\right)
    +\sum_{\ell=1}^{k_0+1}\left[(-\tau)^{-\ell-1}-(1-\tau)^{-\ell-1}\right]
    \frac{\left(k_0+1\right)!}{2\left(k_0+1-\ell\right)!}
    \left(2 d_{k_0+1-\ell}-u_{k_0-\ell}\right)\right\|_2\nonumber\\
    &\le
    C'''(M_1K)^{k_0+1}(k_0+1)!
    \sqrt{\frac{d \theta \log T}{\tau(1-\tau)^3}}
    \left(\frac{d \theta \log T}{\tau(1-\tau)}\right)^{k_0+1}+
    \frac{C_{\mathrm s}(M_1K)^{k_0}(k_0+1)!}{2}
    \sqrt{\frac{d \theta \log T}{\tau(1-\tau)^3}}
    \left(\frac{d \theta \log T}{\tau(1-\tau)}\right)^{k_0+1}\nonumber\\
    &\quad+
    \frac{(C'''+C_{\mathrm s})K}{M_1K}
    (M_1K)^{k_0+1}(k_0+1)!
    \sqrt{\frac{d \theta \log T}{\tau(1-\tau)^3}}
    \left(\frac{d \theta \log T}{\tau(1-\tau)}\right)^{k_0}\nonumber\\
    &\le
    C_{\mathrm s}(M_1K)^{k_0+1}(k_0+1)!
    \sqrt{\frac{d \theta \log T}{\tau(1-\tau)^3}}
    \left(\frac{d \theta \log T}{\tau(1-\tau)}\right)^{k_0+1}.
\end{align}
where the last inequality follows by choosing $C_{\mathrm s}\ge 2+C'''$ and $M_1\geq 100 C^2_{\mathrm{s}} c_7^2$. Thus we proved \eqref{uk1} holds for any $0\le k\le K$.

We next prove \eqref{eq:lemma132}. 
 By Tweedie's formula \cite{efron2011tweedie}, we have
\begin{align*}
\frac{1}{(1-\tau)^{3/2}}
\frac{\partial s_\tau^\star(x_\tau^\star)}{\partial u_\tau^\star}
=
-\frac{1}{\tau(1-\tau)} I_d
+
\frac{1}{\tau^2}
\operatorname{Cov}\left(
\left.
\frac{x_\tau^\star}{\sqrt{1-\tau}}-X_0
\,\right|\,
\bar{X}_\tau=x_\tau^\star
\right).
\end{align*}

Denote
$m_k:=
\frac{\partial^k}{\partial \tau^k}
\left[
\frac{1}{(1-\tau)^{3/2}}
\frac{\partial s_\tau^\star(x_\tau^\star)}{\partial u_\tau^\star}
\right].$
For sufficiently small $\delta$, we write
\begin{align*}
\sum_{k=1}^{\infty}\frac{\delta^k}{k!}m_k
&=
\frac{1}{(1-\tau-\delta)^{3/2}}
\frac{\partial s_{\tau+\delta}^\star(x_{\tau+\delta}^\star)}
{\partial u_{\tau+\delta}^\star}
-
\frac{1}{(1-\tau)^{3/2}}
\frac{\partial s_\tau^\star(x_\tau^\star)}
{\partial u_\tau^\star},
\end{align*}
where
\begin{align*}
&\frac{1}{(1-\tau-\delta)^{3/2}}
\frac{\partial s_{\tau+\delta}^\star(x_{\tau+\delta}^\star)}
{\partial u_{\tau+\delta}^\star}  =
-\frac{I_d}{(\tau+\delta)(1-\tau-\delta)}
+\frac{1}{(\tau+\delta)^2}
\Biggl[
\frac{
\int p_{X_0 \mid \bar{X}_\tau}(x_0 \mid x_\tau^\star)
e^{\Delta}
\left(u_\tau^\star-x_0\right)
\left(u_\tau^\star-x_0\right)^\top
\,\mathrm d x_0}
{
\int p_{X_0 \mid \bar{X}_\tau}(x_0 \mid x_\tau^\star)
e^{\Delta}\,\mathrm d x_0
} \\
&\qquad\qquad -
\left(
\frac{
\int p_{X_0 \mid \bar{X}_\tau}(x_0 \mid x_\tau^\star)
e^{\Delta}
\left(u_\tau^\star-x_0\right)
\,\mathrm d x_0}
{
\int p_{X_0 \mid \bar{X}_\tau}(x_0 \mid x_\tau^\star)
e^{\Delta}\,\mathrm d x_0
}
\right)
\left(
\frac{
\int p_{X_0 \mid \bar{X}_\tau}(x_0 \mid x_\tau^\star)
e^{\Delta}
\left(u_\tau^\star-x_0\right)
\,\mathrm d x_0}
{
\int p_{X_0 \mid \bar{X}_\tau}(x_0 \mid x_\tau^\star)
e^{\Delta}\,\mathrm d x_0
}
\right)^\top
\Biggr],
\end{align*}
and
$\Delta=\sum_{k\ge1}\frac{\delta^k}{k!}v_k.$
Applying the same argument as above, gives
\begin{equation}
\left\|m_k\right\|
=
\left\|
\frac{\partial^k}{\partial\tau^k}
\left[
\frac{1}{(1-\tau)^{3/2}}
\frac{\partial s_\tau^\star(x_\tau^\star)}{\partial u_\tau^\star}
\right]
\right\|
\le
\widetilde{C}(M_2K)^k k!
\left(
\frac{d\theta\log T}{\tau(1-\tau)}
\right)^{k+1},
\quad M_2\ge2,\quad0\le k\le K.
\label{mk}
\end{equation}

On the other hand, recall that $J_\tau^{\star}:=\frac{\partial\left(x_\tau^{\star} / \sqrt{1-\tau}\right)}{\partial\left(x_{\tau_{t, 0}}^{\star} / \sqrt{1-\tau_{t, 0}}\right)}$ and \eqref{eq:exact_jacobian_flow}, then the exact Jacobian satisfies
\[
\frac{d}{d\tau}J_\tau^\star
=
-\frac12
\left[
\frac{1}{(1-\tau)^{3/2}}
\frac{\partial s_\tau^\star(x_\tau^\star)}{\partial u_\tau^\star}
\right]
J_\tau^\star,
\qquad
J_{\tau_{t,0}}^\star=I.
\]
Because \eqref{eq:gs_stage2_Jstar_uniform} gives
$\|J_\tau^\star\|\le c_8$. Then we choose $\widehat{C}\ge c_8$ and assume that $\left\|\frac{\partial^k}{\partial \tau^k} J_\tau^{\star}\right\|\le \widehat{C} k!\left(M_2 K\right)^k\left(\frac{d \theta \log T}{\tau(1-\tau)}\right)^k$ for $0\le k\le k_0$. Differentiating the preceding identity
$k-1$ times yields
\begin{align} \label{diff_J}
    \frac{\partial^k}{\partial\tau^k}J_\tau^\star
=
-\frac12
\sum_{\ell=0}^{k-1}
\binom{k-1}{\ell}
m_\ell \frac{\partial^{k-1-\ell}}{\partial\tau^{k-1-\ell}}J_\tau^\star.
\end{align}
Using \eqref{mk} and assumption above, we prove \eqref{diff_J} for $k=k_0+1$
\begin{align*}
\left\|\frac{\partial^{k_0+1}}{\partial\tau^{k_0+1}}J_\tau^\star \right\|
&\le
\frac12
\sum_{\ell=0}^{k_0}
\binom{k_0}{\ell}
\|m_\ell\|\,
\left\|\frac{\partial^{k_0-\ell}}{\partial\tau^{k_0-\ell}}J_\tau^\star\right\| \\
&\le
\frac12 \widetilde{C}\widehat{C}
\sum_{\ell=0}^{k_0}
\binom{k_0}{\ell}
(M_2K)^\ell \ell!
\left(
\frac{d\theta\log T}{\tau(1-\tau)}
\right)^{\ell+1}
(M_2K)^{k_0-\ell}(k_0-\ell)!
\left(
\frac{d\theta\log T}{\tau(1-\tau)}
\right)^{k_0-\ell} \\
&\le
\frac{\widetilde{C}}{2M_2K}
\widehat{C} (k_0+1)!(M_2K)^{k_0+1}
\left(
\frac{d\theta\log T}{\tau(1-\tau)}
\right)^{k_0+1} \\
&\le \widehat{C} (k_0+1)!(M_2K)^{k_0+1}
\left(
\frac{d\theta\log T}{\tau(1-\tau)}
\right)^{k_0+1}.
\end{align*}
Finally, applying Leibniz' rule to
$\left[
\frac{1}{(1-\tau)^{3/2}}
\frac{\partial s_\tau^\star(x_\tau^\star)}{\partial u_\tau^\star}
\right]
J_\tau^\star$ and combining with \eqref{mk} and the bound of $\left\|\frac{\partial^k}{\partial \tau^k} J_\tau^{\star}\right\|$ above, for any $0\le k \le K$ we get
\begin{align*}
\left\|
\frac{\partial^k}{\partial\tau^k}
\left[
\frac{1}{(1-\tau)^{3/2}}
\frac{\partial s_\tau^\star(x_\tau^\star)}{\partial u_\tau^\star}
J_\tau^\star
\right]
\right\| \le
\sum_{\ell=0}^{k}
\binom{k}{\ell}
\|m_\ell\|\,
\|\frac{\partial^{k-\ell}}{\partial \tau^{k-\ell}} J_\tau^{\star}\| d\le
C_{\mathrm J}(M_2K)^{k+1} k!
\left(
\frac{d\theta\log T}{\tau(1-\tau)}
\right)^{k+1},
\end{align*}
where the last inequality follows from$C_{\mathrm{J}} \geq 2\widetilde{C} \widehat{C}$, $k+1\le K+1\le 2K$ and $M_2\ge2$.
This proves \eqref{eq:lemma132}.
\end{proof}

\subsection{Proof of Lemma~\ref{lemma14}}
\label{proof_lemma14}
\begin{proof}
For convenience, let
$\theta_t:=\theta_{\tau_{t,0}}(x_{\tau_{t,0}})$.
Recalling that \(x_\tau^\star\) is the solution of ODE \eqref{ODE} at \(\tau\) with
the initial condition \(x_{\tau_{t,0}}^\star=x_{\tau_{t,0}}\), we know from
Lemma~\ref{lemma12} that
\[
-\log p_{\bar X_\tau}(x_\tau^\star)
\le
2\theta_t d\log T,
\qquad
\tau_{t,K-1}\le \tau\le \tau_{t,0}.
\]
The same argument as in \cite[Eq. (91)]{li2025faster} gives, for all \(0\le i\le K-1\) and all \(\lambda\in[0,1]\),
\begin{equation} \label{eq:lemma14_initial_all_nodes}
-\log p_{\bar X_{\tau_{t,i}}}
\left(
\lambda x_{\tau_{t,i}}^{(0)}
+
(1-\lambda)x_{\tau_{t,i}}^\star
\right)
\le
2.1d\theta_t\log T.
\end{equation}
Combining \eqref{eq:lemma14_initial_all_nodes} with Lemma~\ref{lemma11}, we obtain
\begin{equation} \label{eq:lemma14_initial_score_jacobian}
\left\|
\frac{
\partial s_{\tau_{t,i}}^\star
\left(
\lambda x_{\tau_{t,i}}^{(0)}
+
(1-\lambda)x_{\tau_{t,i}}^\star
\right)
}{\partial x}
\right\|
\lesssim
\frac{d\theta_t\log T}{\tau_{t,i}},
\qquad
0\le i\le K-1,\quad \lambda\in[0,1].
\end{equation}
For any \(0\le n\le N-1\) and \(0\le i\le K-1\), we define
\begin{align} \label{u_n}
u_{t,i}^{(n+1)}
&:=
\sqrt{\frac{1-\tau_{t,0}}{1-\tau_{t,i}}}\,
x_{\tau_{t,i}}^{(n+1)}
-
x_{\tau_{t,0}} =
\sqrt{1-\tau_{t,0}}
\left[
\sum_{j=0}^{i-1}
A_{ji}^{(t)}
s_{\tau_{t,j}}\!\left(x_{\tau_{t,j}}^{(n+1)}\right)
+
\sum_{j=i}^{K-1}
A_{ji}^{(t)}
s_{\tau_{t,j}}\!\left(x_{\tau_{t,j}}^{(n)}\right)
\right],
\end{align}
where $A_{ji}^{(t)}=\frac{1}{2}\gamma_{t,j}\left(\tau_{t,i}\right) \left(1-\tau_{t,j}\right)^{-3/2}$.
Similarly, let
\begin{align} \label{u_star}
   u_{t,i}^\star
:=
\sqrt{\frac{1-\tau_{t,0}}{1-\tau_{t,i}}}\,
x_{\tau_{t,i}}^\star
-
x_{\tau_{t,0}}=
-\sqrt{1-\tau_{t,0}}
\int_{\tau_{t,0}}^{\tau_{t,i}}
\frac{1}{2(1-\tau)^{3/2}}
s_\tau^\star(x_\tau^\star)\,d\tau,
\end{align}
where the second inequality follows from \eqref{ODE_score}.
Next, we shall prove the following two estimates simultaneously by induction:  for each \(0\le n\le N-1\) and each \(0\le i\le K-1\), it holds
\begin{equation} \label{eq:lemma14_induction_goal_u}
\left\|u_{t,i}^{(n+1)}-u_{t,i}^\star\right\|_2
\le
C_{10}
\frac{\tau_{t,0}\log T}{T}
\sqrt{\log K\sum_{m,j}\left(\varepsilon_{\mathrm{score}, t, j}^{(m)}\left(x_{\tau_{t, j}}^{(m)}\right)\right)^2}\,
+
C_{10}
\sqrt{\frac{d\theta_t\tau_{t,0}\log^3T}{T^2}}
\left(
\frac{d\theta_t\log^2TK}{T}
\right)
\end{equation} 
 and for all \(\lambda\in[0,1]\),
\begin{equation} \label{eq:lemma14_induction_goal_p}
-\log p_{\bar X_{\tau_{t,i}}}
\left(
\lambda x_{\tau_{t,i}}^{(n+1)}
+
(1-\lambda)x_{\tau_{t,i}}^\star
\right)
\le
2.1d\theta_t\log T.
\end{equation}
Here,  $\varepsilon_{\mathrm{score}, t, j}^{(m)}(\cdot)$ is defined in \eqref{def_vareps} and 
$C_{10}>0$ is a sufficiently large constant.
 Moreover, with iteration \eqref{eq:gs_update} we have
\[
x_{\tau_{t,0}}^{(n)}
=
x_{\tau_{t,0}}^\star
=
x_{\tau_{t,0}},
\qquad
\text{for all} \quad n\ge 0
\]
Thus for $n=0,i=0$, we have 
\begin{align}
    u_{t,0}^{(1)}=u_{t,0}^\star=0,&\quad\left\|u_{t,0}^{(1)}-u_{t,0}^\star\right\|_2=0,
\label{eq:lemma14_i0_u}\\
-\log p_{\bar{X}_{\tau_{t, 0}}}\left(\lambda x_{\tau_{t, 0}}^{(1)}+(1-\lambda) x_{\tau_{t, 0}}^{\star}\right)&=-\log p_{\bar{X}_{\tau_{t, 0}}}\left( x_{\tau_{t, 0}}^{\star}\right)\le 2.1d\theta_t\log T, \label{eq:lemma14_i0_p}
\end{align}
where \eqref{eq:lemma14_i0_p} holds due to \eqref{eq:lemma14_initial_all_nodes}. This implies  \eqref{eq:lemma14_induction_goal_u} and \eqref{eq:lemma14_induction_goal_p} holds for $n=0,i=0$.
Similar to \eqref{eq:lemma14_initial_score_jacobian}, we also have
\begin{equation}
\left\|
\frac{
\partial s_{\tau_{t,0}}^\star
\left(
\lambda x_{\tau_{t,0}}^{(1)}
+
(1-\lambda)x_{\tau_{t,0}}^\star
\right)
}{\partial x}
\right\|
\lesssim
\frac{d\theta_t\log T}{\tau_{t,0}},
\quad \lambda\in[0,1].
\end{equation}
Assume that for all \(0\le i\le k_0\) and for all \(\lambda\in[0,1]\),  \eqref{eq:lemma14_induction_goal_u} and
 \eqref{eq:lemma14_induction_goal_p} hold, where $0\le k_0 \le K-2$.

\paragraph{Prove \eqref{eq:lemma14_induction_goal_u} for $n=0$ and $i=k_0+1$.}
By \eqref{u_n}, \eqref{u_star},  and the definition of $A_{ji}^{(t)}=\frac{1}{2}\gamma_{t,j}\left(\tau_{t,i}\right)
\left(1-\tau_{t,j}\right)^{-3/2}$,  we have
\[
u_{t,k_0+1}^{(1)}-u_{t,k_0+1}^\star
=
J_{1,k_0+1}^{(1)}
+
J_{2,k_0+1}^{(1)}
+
J_{3,k_0+1}^{(1)},
\]
where
\begin{align}
J_{1,k_0+1}^{(1)}
&=
\sqrt{1-\tau_{t,0}}
\Biggl[
\sum_{j=0}^{k_0}
A_{j,k_0+1}^{(t)}
\Bigl(
s_{\tau_{t,j}}\!\left(x_{\tau_{t,j}}^{(1)}\right)
-
s_{\tau_{t,j}}^\star\!\left(x_{\tau_{t,j}}^{(1)}\right)
\Bigr)
\nonumber\\
&\hspace{3.5cm}
+
\sum_{j=k_0+1}^{K-1}
A_{j,k_0+1}^{(t)}
\Bigl(
s_{\tau_{t,j}}\!\left(x_{\tau_{t,j}}^{(0)}\right)
-
s_{\tau_{t,j}}^\star\!\left(x_{\tau_{t,j}}^{(0)}\right)
\Bigr)
\Biggr],
\nonumber\\
J_{2,k_0+1}^{(1)}
&=
\sqrt{1-\tau_{t,0}}
\Biggl[
\sum_{j=0}^{k_0}
A_{j,k_0+1}^{(t)}
\Bigl(
s_{\tau_{t,j}}^\star\!\left(x_{\tau_{t,j}}^{(1)}\right)
-
s_{\tau_{t,j}}^\star\!\left(x_{\tau_{t,j}}^\star\right)
\Bigr)
\nonumber\\
&\hspace{3.5cm}
+
\sum_{j=k_0+1}^{K-1}
A_{j,k_0+1}^{(t)}
\Bigl(
s_{\tau_{t,j}}^\star\!\left(x_{\tau_{t,j}}^{(0)}\right)
-
s_{\tau_{t,j}}^\star\!\left(x_{\tau_{t,j}}^\star\right)
\Bigr)
\Biggr],
\nonumber\\
J_{3,k_0+1}^{(1)}
&=
\sqrt{1-\tau_{t,0}}
\Biggl[
\sum_{j=0}^{K-1}
A_{j,k_0+1}^{(t)}
s_{\tau_{t,j}}^\star\!\left(x_{\tau_{t,j}}^\star\right)
-
\int_{\tau_{t,k_0+1}}^{\tau_{t,0}}
\frac{1}{2(1-\tau)^{3/2}}
s_\tau^\star(x_\tau^\star)\,d\tau
\Biggr].
\nonumber
\end{align}
For the first term $J_{1, k_0+1}^{(1)}$,  the Cauchy--Schwarz yields
\begin{align} \label{eq:lemma14_J1}
\|J_{1,k_0+1}^{(1)}\|_2
&\lesssim
\sqrt{1-\tau_{t,0}}
\left(
\sum_{j=0}^{K-1}
|A_{j,k_0+1}^{(t)}|^2
\right)^{1/2}
\left(
\sum_{m=0}^{1}
\sum_{j=0}^{K-1}
\left(
\varepsilon_{\mathrm{score},t,j}^{(m)}
\left(x_{\tau_{t,j}}^{(m)}\right)
\right)^2
\right)^{1/2}
\nonumber\\
&\lesssim
\frac{\tau_{t,0}\log T}{T}
\sqrt{\log K\sum_{m=0}^{N}\sum_{j=0}^{K-1}\left(\varepsilon_{\mathrm{score}, t, j}^{(m)}\left(x_{\tau_{t, j}}^{(m)}\right)\right)^2}\,
,
\end{align}
where the last line follows from \eqref{eq:23k} and \eqref{eq:23f} by setting $i_1=0$, $i_2=k_0+1$, $i_3=0$ and $i_4=0$. For the second term $J_{2, k_0+1}^{(1)}$ term, since \eqref{eq:lemma14_induction_goal_p} holds for all \(0\le i\le k_0\), similar to \eqref{eq:lemma14_initial_score_jacobian},  we  have
\begin{align} \label{condition_s}
    \left\|\frac{\partial s_{\tau_{t, i}}^{\star}\left(\lambda x_{\tau_{t, i}}^{(1)}+(1-\lambda) x_{\tau_{t, i}}^{\star}\right)}{\partial x}\right\| \lesssim \frac{d \theta_t \log T}{\tau_{t, i}}, \quad \lambda \in[0,1] 
\end{align}
for all \(0\le i\le k_0\). Using \eqref{condition_s} and \eqref{eq:lemma14_initial_score_jacobian} we obtain
\begin{align}
\|J_{2,k_0+1}^{(1)}\|_2
&\lesssim
\sqrt{1-\tau_{t,0}}
\sum_{j=0}^{k_0}
|A_{j,k_0+1}^{(t)}|
\frac{d\theta_t\log T}{\tau_{t,j}}
\left\|x_{\tau_{t,j}}^{(1)}-x_{\tau_{t,j}}^\star\right\|_2
\nonumber\\
&\quad+
\sqrt{1-\tau_{t,0}}
\sum_{j=k_0+1}^{K-1}
|A_{j,k_0+1}^{(t)}|
\frac{d\theta_t\log T}{\tau_{t,j}}
\left\|x_{\tau_{t,j}}^{(0)}-x_{\tau_{t,j}}^\star\right\|_2
\nonumber\\
&\lesssim
\frac{d\theta_t\log^2T\,\log K}{T}
\Biggl[
\max_{0\le j\le k_0}
\left\|u_{t,j}^{(1)}-u_{t,j}^\star\right\|_2
+
\max_{k_0+1\le j\le K-1}
\left\|x_{\tau_{t,j}}^{(0)}-x_{\tau_{t,j}}^\star\right\|_2
\Biggr],
\label{eq:lemma14_J2}
\end{align}
where the last line follows from $x_{\tau_{t,j}}^{(m)}-x_{\tau_{t,j}}^\star
=
\sqrt{\frac{1-\tau_{t,j}}{1-\tau_{t,0}}}
\left(
u_{t,j}^{(m)}-u_{t,j}^\star
\right)$, $1 \le m \le N$ and \eqref{eq:23j} with the fact $\tau_{t,K-1}\le \tau_{t,j}$ ,$0\le j\le K-1$, \eqref{eq:23f} by setting $i_1=0$, $i_2=k_0+1$, $i_3=K-1$ and $i_4=0$.
The first term in \eqref{eq:lemma14_J2} is bounded by assumption \eqref{eq:lemma14_induction_goal_u} holds for all $0\le i \le k_0$. For the second term in \eqref{eq:lemma14_J2}, the same estimate
as in \cite[(90)]{li2025faster} gives
\[
\max_{k_0+1\le j\le K-1}
\left\|x_{\tau_{t,j}}^{(0)}-x_{\tau_{t,j}}^\star\right\|_2
\lesssim
\frac{\tau_{t,0}-\tau_{t,K-1}}{\sqrt{\tau_{t,0}}}
\sqrt{d\theta_t\log T}.
\]
Then we obtain
\begin{align} \label{eq:lemma14_J2_last}
\|J_{2,k_0+1}^{(1)}\|_2
&\lesssim
\frac{d\theta_t\log^2T\,\log K}{T}
\Biggl(
\begin{aligned}[t]
&
C_{10}\frac{\tau_{t,0}\log T}{T}
\sqrt{\log K\sum_{m=0}^{N}\sum_{j=0}^{K-1}\left(\varepsilon_{\mathrm{score}, t, j}^{(m)}\left(x_{\tau_{t, j}}^{(m)}\right)\right)^2}\,
\\
&+
C_{10}\sqrt{\frac{d\theta_t\tau_{t,0}\log^3T}{T^2}}
\left(
\frac{d\theta_t\log^2TK}{T}
\right)
+
\frac{\tau_{t,0}-\tau_{t,K-1}}{\sqrt{\tau_{t,0}}}
\sqrt{d\theta_t\log T}\Biggr)
\end{aligned}
\nonumber\\
&\lesssim
\frac{\tau_{t,0}\log T}{T}
\sqrt{\log K\sum_{m=0}^{N}\sum_{j=0}^{K-1}\left(\varepsilon_{\mathrm{score}, t, j}^{(m)}\left(x_{\tau_{t, j}}^{(m)}\right)\right)^2}\,
+
\sqrt{\frac{d\theta_t\tau_{t,0}\log^3T}{T^2}}
\left(
\frac{d\theta_t\log^2TK}{T}
\right).
\end{align}
Here the last inequality follows from $T\ge C_2d\log^4T$ and $\frac{\tau_{t, 0}-\tau_{t, K-1}}{\sqrt{\tau_{t, 0}}} \sqrt{d \theta_t \log T}\le\frac{(\tau_{t, 0}-\tau_{t, K-1})}{(1-\tau_{t,K-1})\tau_{t,0}} \sqrt{d \tau_{t,0}\theta_t \log T}$ with \eqref{eq:23f} in Lemma~\ref{lemma9}. Similar to Bound for $G_2$ \eqref{eq:exact_flow_integral_cheb} in Section~\ref{lemma3_proof}, $J_{3, k_0+1}^{(1)}$ is easily bounded:
\begin{equation} \label{eq:lemma14_J3_last}
\|J_{3,k_0+1}^{(1)}\|_2
\lesssim
\sqrt{\frac{d\theta_t\tau_{t,0}\log^3T}{T^2}}
\left(
\frac{M_0K d\theta_t\log^2T}{T}
\right)^K.
\end{equation}
Combining
\eqref{eq:lemma14_J1},
\eqref{eq:lemma14_J2_last}, and
\eqref{eq:lemma14_J3_last}, for sufficiently large \(T\) and large enough $C_{10}$ we get
\begin{equation} \label{eq:lemma14_u_last_node}
\left\|u_{t,k_0+1}^{(1)}-u_{t,k_0+1}^\star\right\|_2
\le
C_{10}
\frac{\tau_{t,0}\log T}{T}
\sqrt{\log K\sum_{m=0}^{N}\sum_{j=0}^{K-1}\left(\varepsilon_{\mathrm{score}, t, j}^{(m)}\left(x_{\tau_{t, j}}^{(m)}\right)\right)^2}\,
+
C_{10}
\sqrt{\frac{d\theta_t\tau_{t,0}\log^3T}{T^2}}
\left(
\frac{d\theta_t\log^2TK}{T}
\right).
\end{equation}
By induction,  we prove  \eqref{eq:lemma14_induction_goal_u}  for any $0\le i \le K-1$ at $n=0$.

\paragraph{Prove \eqref{eq:lemma14_induction_goal_p} for $n=0$ and $i=k_0+1$.}
We next derive the bound of $-\log p_{\bar{X}_{\tau_{t, k_0+1}}}\left(\lambda x_{\tau_{t, k_0+1}}^{(1)}+(1-\lambda) x_{\tau_{t, k_0+1}}^{\star}\right)$ from \eqref{eq:lemma14_u_last_node}.  We first claim that for $i=k_0+1$ at $n=0$, it holds by large enough $C_{10}$
\begin{align}
  \label{eq:lemma14_claim}
&\left|
\log
\frac{
p_{\sqrt{\frac{1-\tau_{t,0}}{1-\tau_{t,k_0+1}}}\bar X_{\tau_{t,k_0+1}}}
\left(
\sqrt{\frac{1-\tau_{t,0}}{1-\tau_{t,k_0+1}}}
\bigl(
\lambda x_{\tau_{t,k_0+1}}^\star
+
(1-\lambda)x_{\tau_{t,k_0+1}}^{(1)}
\bigr)
\right)
}{
p_{\bar X_{\tau_{t,0}}}(x_{\tau_{t,0}})
}
\right| \notag
\\  & \qquad\le
C_{10}
\dkh{
\frac{d\theta_t\log^2T}{T}
+
\frac{\sqrt{d\theta_t\log^3T\,\log K\sum_{m,j}\left(\varepsilon_{\mathrm{score}, t, j}^{(m)}\left(x_{\tau_{t, j}}^{(m)}\right)\right)^2}}{T}
},  
\end{align} 
By the affine change of variables,
\begin{align*}
p_{\sqrt{\frac{1-\tau_{t,0}}{1-\tau_{t,k_0+1}}}\bar X_{\tau_{t,k_0+1}}}&
\left(
\sqrt{\frac{1-\tau_{t,0}}{1-\tau_{t,k_0+1}}}(\lambda x_{\tau_{t,k_0+1}}^\star
+
(1-\lambda)x_{\tau_{t,k_0+1}}^{(1)})
\right)\\
=&
\left(
\frac{1-\tau_{t,0}}{1-\tau_{t,k_0+1}}
\right)^{-d/2}
p_{\bar X_{\tau_{t,k_0+1}}}(\lambda x_{\tau_{t,k_0+1}}^\star
+
(1-\lambda)x_{\tau_{t,k_0+1}}^{(1)}).
\end{align*}
Therefore,
\begin{align} \label{eq:p_last}
&-\log p_{\bar X_{\tau_{t,k_0+1}}}(\lambda x_{\tau_{t,k_0+1}}^\star
+
(1-\lambda)x_{\tau_{t,k_0+1}}^{(1)})\nonumber\\
&=
-\log
p_{\sqrt{\frac{1-\tau_{t,0}}{1-\tau_{t,k_0+1}}}\bar X_{\tau_{t,k_0+1}}}
\left(
\sqrt{\frac{1-\tau_{t,0}}{1-\tau_{t,k_0+1}}}(\lambda x_{\tau_{t,k_0+1}}^\star
+
(1-\lambda)x_{\tau_{t,k_0+1}}^{(1)})
\right)
-
\frac d2
\log
\left(
\frac{1-\tau_{t,0}}{1-\tau_{t,k_0+1}}
\right)
\nonumber\\
&\le
-\log p_{\bar X_{\tau_{t,0}}}(x_{\tau_{t,0}})
+
C_{10}
\left\{
\frac{d\theta_t\log^2T}{T}
+
\frac{\sqrt{d\theta_t\log^3T\,\log K\sum_{m,j}\left(\varepsilon_{\mathrm{score}, t, j}^{(m)}\left(x_{\tau_{t, j}}^{(m)}\right)\right)^2}}{T}
\right\}
+
c_1\frac{d\log T}{T}\nonumber\\
&\le
2.1d\theta_t\log T,
\end{align}
where the third line follows from $-\frac{d}{2} \log \left(\frac{1-\tau_{t, 0}}{1-\tau_{t, k_0+1}}\right)\le -\frac{d}{2} \log \left(\frac{1-\tau_{t, 0}}{1-\tau_{t, K-1}}\right)=-\frac{d}{2} \log \left(\frac{\bar{\alpha}_t}{\bar{\alpha}_{t-1}}\right)=-\frac{d}{2} \log \left(\alpha_t\right)=\frac{d}{2}\int_0^{\frac{c_1  \log T}{T}} \frac{1}{1-u} d u \leq \frac{d}{2} \int_0^{\frac{c_1  \log T}{T}} 2 d u\leq \frac{d}{2} \cdot \frac{2 c_1 \log T}{T}=\frac{c_1 d \log T}{T}$ using \eqref{eq:23a} in Lemma~\ref{lemma9}
and the last line holds by $-\log p_{\bar{X}_{\tau_{t, 0}}}\left(x_{\tau_{t, 0}}\right)\le d\theta_t\log T$ with \eqref{assu_1} and condition \eqref{eq:lemma14_condition_here} in Lemma~\ref{lemma14} for sufficiently large \(T\).  This proves that \eqref{eq:lemma14_induction_goal_p} holds for any $0 \le i \le K-1$ at $n=0$.

Repeat the same arguments above at $n=1,\cdots,N-1$, we can prove \eqref{eq:lemma14_induction_goal_u} and \eqref{eq:lemma14_induction_goal_p} hold for all $0\le n \le N-1$ and $0\le i \le K-1$.
Moreover,  we have
\begin{equation}
\log
\frac{
p_{\sqrt{\frac{1-\tau_{t,0}}{1-\tau_{t,i}}}\bar X_{\tau_{t,i}}}
\left(
\sqrt{\frac{1-\tau_{t,0}}{1-\tau_{t,i}}}
x_{\tau_{t,i}}^{(n+1)}
\right)
}{
p_{\bar X_{\tau_{t,0}}}(x_{\tau_{t,0}})
}
\le
\frac{4c_1d\log T}{T}
+
C_{10}
\left\{
\frac{d^2\theta_t^2\log^4TK}{T^2}
+
\frac{\sqrt{d\theta_t\log^3T\,\log K\sum_{m,j}\left(\varepsilon_{\mathrm{score}, t, j}^{(m)}\left(x_{\tau_{t, j}}^{(m)}\right)\right)^2}}{T}
\right\},
\label{eq:lemma14_claim_inter}
\end{equation}
holds for all $0\le n \le N-1$ and $0\le i \le K-1$. This gives the conclusion.  It remains to prove claim \eqref{eq:lemma14_claim}.

\paragraph{Proof of claim \eqref{eq:lemma14_claim}}
As in \cite[Eq. (95)--(96)]{li2025faster}, one has
\begin{equation}
\|u_{t,k_0+1}^\star\|_2
\le
C\sqrt{\frac{d\theta_t\tau_{t,0}\log^3T}{T^2}}.
\label{eq:lemma14_ustar}
\end{equation}
Therefore, by \eqref{eq:lemma14_u_last_node} and \eqref{eq:lemma14_ustar},
\begin{equation}
\|u_{t,k_0+1}^{(1)}\|_2
\le
C_{10}
\left\{
\sqrt{\frac{d\theta_t\tau_{t,0}\log^3T}{T^2}}
+
\frac{\tau_{t,0}\log T}{T}
\sqrt{\log K\sum_{m=0}^{N}\sum_{j=0}^{K-1}\left(\varepsilon_{\mathrm{score}, t, j}^{(m)}\left(x_{\tau_{t, j}}^{(m)}\right)\right)^2}\,
\right\}.
\label{eq:lemma14_u1}
\end{equation}

As in \cite[Eq. (93)]{li2025faster}, we have
\begin{align}
&
\frac{
p_{\sqrt{\frac{1-\tau_{t,0}}{1-\tau_{t,k_0+1}}}\bar X_{\tau_{t,k_0+1}}}
\left(
\sqrt{\frac{1-\tau_{t,0}}{1-\tau_{t,k_0+1}}}
x_{\tau_{t,k_0+1}}^{(1)}
\right)
}{
p_{\bar X_{\tau_{t,0}}}(x_{\tau_{t,0}})
}
\nonumber\\
&=
\left(
1+\frac d2
\frac{\tau_{t,0}-\tau_{t,k_0+1}}
{(1-\tau_{t,0})\tau_{t,k_0+1}}
+
O\left(
d^2
\left(
\frac{\tau_{t,0}-\tau_{t,k_0+1}}
{(1-\tau_{t,0})\tau_{t,k_0+1}}
\right)^2
\right)
\right)
\int_{x_0}
p_{X_0\mid \bar X_{\tau_{t,0}}}
(x_0\mid x_{\tau_{t,0}})
\nonumber\\
&\quad\cdot
\exp\left(
-\frac{
(\tau_{t,0}-\tau_{t,k_0+1})
\|x_{\tau_{t,0}}-\sqrt{1-\tau_{t,0}}x_0\|_2^2
}{
2(1-\tau_{t,0})\tau_{t,0}\tau_{t,k_0+1}
}
-
\frac{
\|u_{t,k_0+1}^{(1)}\|_2^2
+
2\left\langle
u_{t,k_0+1}^{(1)},
x_{\tau_{t,0}}-\sqrt{1-\tau_{t,0}}x_0
\right\rangle
}{
2\frac{(1-\tau_{t,0})\tau_{t,k_0+1}}{1-\tau_{t,k_0+1}}
}
\right)
\,dx_0.
\label{eq:lemma14_bayes_last_node}
\end{align}

Denote for \(\ell=1,2,\ldots\),
\[
E_\ell^{\mathrm{typical}}
:=
\left\{
x_0:
\left\|x_{\tau_{t,0}}-\sqrt{1-\tau_{t,0}}x_0\right\|_2
\le
5\ell\sqrt{d\theta_t\tau_{t,0}\log T}
\right\}.
\]
Then for any \(x_0\in E_\ell^{\mathrm{typical}}\), similar to \cite[Eq. (98)]{li2025faster}, by \eqref{eq:23f} and
\eqref{eq:23h},
\begin{equation}
\frac{
(\tau_{t,0}-\tau_{t,k_0+1})
\left\|x_{\tau_{t,0}}-\sqrt{1-\tau_{t,0}}x_0\right\|_2^2
}{
2(1-\tau_{t,0})\tau_{t,0}\tau_{t,k_0+1}
}
\lesssim
\ell^2
\frac{d\theta_t\log^2T}{T}.
\label{eq:lemma14_shell1_last_node}
\end{equation}
Also, using \eqref{eq:lemma14_u1}, \eqref{eq:23h}, and
$\left\|x_{\tau_{t,0}}-\sqrt{1-\tau_{t,0}}x_0\right\|_2
\le
5\ell\sqrt{d\theta_t\tau_{t,0}\log T}$,
similar to \cite[Eq. (99)]{li2025faster} we have
\begin{align}
&
\left|
\frac{
\|u_{t,k_0+1}^{(1)}\|_2^2
+
2\left\langle
u_{t,k_0+1}^{(1)},
x_{\tau_{t,0}}-\sqrt{1-\tau_{t,0}}x_0
\right\rangle
}{
2\frac{(1-\tau_{t,0})\tau_{t,k_0+1}}{1-\tau_{t,k_0+1}}
}
\right|
\nonumber\\
&\lesssim
\ell
\frac{d\theta_t\log^2T}{T}
+
\ell
\frac{\sqrt{d\theta_t\log^3T\,\log K\sum_{m=0}^{N}\sum_{j=0}^{K-1}\left(\varepsilon_{\mathrm{score}, t, j}^{(m)}\left(x_{\tau_{t, j}}^{(m)}\right)\right)^2}\,}{T}.
\label{eq:lemma14_shell2_last_node}
\end{align}
Combining \eqref{eq:lemma14_u_last_node}, \eqref{eq:lemma14_bayes_last_node},
\eqref{eq:lemma14_shell1_last_node}, and
\eqref{eq:lemma14_shell2_last_node}, and then repeating the same shell
decomposition and Jensen argument as in \cite[Eq. (100)--(106)]{li2025faster},
together with Lemma~\ref{lemma13} and \eqref{eq:23f}, yields
\begin{align}
  \label{eq:lemma14_density_last_node_first_sweep}
&\log
\frac{
p_{\sqrt{\frac{1-\tau_{t,0}}{1-\tau_{t,k_0+1}}}\bar X_{\tau_{t,k_0+1}}}
\left(
\sqrt{\frac{1-\tau_{t,0}}{1-\tau_{t,k_0+1}}}
x_{\tau_{t,k_0+1}}^{(1)}
\right)
}{
p_{\bar X_{\tau_{t,0}}}(x_{\tau_{t,0}})
} \notag \\
&\qquad \le
\frac{4c_1d\log T}{T}
+
C_{10}
\left\{
\frac{d^2\theta_t^2\log^4 T\,K}{T^2}
+
\frac{
\sqrt{
d\theta_t\log^3 T\,\log K
\sum_{m,j}
\left(
\varepsilon_{\mathrm{score},t,j}^{(m)}
\left(x_{\tau_{t,j}}^{(m)}\right)
\right)^2
}
}{T}
\right\}.
\end{align}

Next, fix any \(\lambda \in [0,1]\) and define
\[
\widetilde x_{\tau_{t,k_0+1}}^{(1)}(\lambda)
:=
\lambda x_{\tau_{t,k_0+1}}^\star
+
(1-\lambda)x_{\tau_{t,k_0+1}}^{(1)},
\]
and
\[
\widetilde u_{t,k_0+1}^{(1)}(\lambda)
:=
\sqrt{\frac{1-\tau_{t,0}}{1-\tau_{t,k_0+1}}}\,
\widetilde x_{\tau_{t,k_0+1}}^{(1)}(\lambda)
-
x_{\tau_{t,0}}.
\]
Then
\[
\widetilde u_{t,k_0+1}^{(1)}(\lambda)
=
\lambda u_{t,k_0+1}^\star
+
(1-\lambda)u_{t,k_0+1}^{(1)}.
\]
Consequently,
\[
\left\|\widetilde u_{t,k_0+1}^{(1)}(\lambda)\right\|_2
\le
\left\|u_{t,k_0+1}^\star\right\|_2
+
\left\|u_{t,k_0+1}^{(1)}-u_{t,k_0+1}^\star\right\|_2.
\]
It follows that the same bounds
\eqref{eq:lemma14_shell1_last_node} and
\eqref{eq:lemma14_shell2_last_node} remain valid with
\(u_{t,k_0+1}^{(1)}\) replaced by
\(\widetilde u_{t,k_0+1}^{(1)}(\lambda)\). Repeating the argument used to
derive \eqref{eq:lemma14_density_last_node_first_sweep}, we obtain
\begin{align}
&\left|
\log
\frac{
p_{\sqrt{\frac{1-\tau_{t,0}}{1-\tau_{t,k_0+1}}}\bar X_{\tau_{t,k_0+1}}}
\left(
\sqrt{\frac{1-\tau_{t,0}}{1-\tau_{t,k_0+1}}}
\bigl(
\lambda x_{\tau_{t,k_0+1}}^\star
+
(1-\lambda)x_{\tau_{t,k_0+1}}^{(1)}
\bigr)
\right)
}{
p_{\bar X_{\tau_{t,0}}}(x_{\tau_{t,0}})
}
\right| \notag \\
&\qquad \le
C_{10}
\left\{
\frac{d\theta_t\log^2 T}{T}
+
\frac{
\sqrt{
d\theta_t\log^3 T\,\log K
\sum_{m,j}
\left(
\varepsilon_{\mathrm{score},t,j}^{(m)}
\left(x_{\tau_{t,j}}^{(m)}\right)
\right)^2
}
}{T}
\right\}.
\label{eq:lemma14_segment_last_node_first_sweep}
\end{align}
This proves the claim \eqref{eq:lemma14_claim}.
\end{proof}

\bibliographystyle{unsrt}  

\end{document}